\newtheorem{theorem}{Theorem}
\newtheorem{lemma}[theorem]{Lemma}
\newtheorem{corollary}[theorem]{Corollary}
\newtheorem{remark}[theorem]{Remark}
\newtheorem{definition}[theorem]{Definition}
\newcommand{\R}{\mathbb{R}}
\newcommand{\Ee}{\mathbb {E}}
\newcommand{\Q}{\mathbb{Q}}
\newcommand{\Sf}{\mathbb{S}}
\newcommand{\Hy}{\mathbb{H}}
\newcommand{\hess}{\mbox{Hess\,}}
\newcommand{\grad}{\mbox{grad\,}}
\def\<{\langle}
\def\>{\rangle}
\def\va{\varphi}
\def\e{\epsilon}
\def\d{\partial}
\def\bea{\begin{eqnarray*} }
\def\eea{\end{eqnarray*} }
\def\be{\begin{equation} }
\def\ee{\end{equation} }
\begin{document}

\title{Geometry of submanifolds with respect to ambient vector fields}

\author{Fernando Manfio, Ruy Tojeiro and Joeri Van der Veken
\footnote{Parts of this work were carried out while J. Van der Veken 
visited Universidade de S\~{a}o Paulo in the framework of the 
Young Researchers Summer Program 2018, and while Ruy Tojeiro
visited the KULeuven partially supported by Aucani (USP) Public
Notice 967/2018. J. Van der Veken is 
supported by project 3E160361 of the KU Leuven Research Fund 
and by EOS project G0H4518N of the Belgian government. 
Ruy Tojeiro is supported by Fapesp grant 2016/23746-6 and 
CNPq grant 303002/2017-4.
}}

\date{}
\maketitle

\noindent \emph{2010 Mathematics Subject Classification:} 53 B25.\vspace{2ex}

\noindent \emph{Key words and phrases:} {\small {\em Constant ratio property,  Principal direction property, \\
Radial vector field, Killing vector field, conformal Killing vector field, loxodromic \\isometric immersion.}}

\begin{abstract}
\vspace{1ex}

Given a Riemannian manifold $N^n$ and ${\cal Z}\in \mathfrak{X}(N)$, an isometric 
immersion $f\colon M^m\to N^n$ is said to have the \emph{constant ratio property 
with respect to ${\cal Z}$} either if the tangent component ${\cal Z}^T_f$ of 
${\cal Z}$ vanishes identically  or if ${\cal Z}^T_f$ vanishes nowhere and the ratio 
$\|{\cal Z}^\perp_f\|/\|{\cal Z}^T_f\|$ between the lengths of the normal and 
tangent components of ${\cal Z}$  is constant along $M^m$. 
It has  the \emph{principal direction property with respect to ${\cal Z}$} if 
${\cal Z}^T_f$ is an eigenvector of all shape operators of $f$ at all points of $M^m$. 
In this article we study isometric immersions $f\colon M^m\to N^n$ of arbitrary
codimension that have either the constant ratio or the  principal direction property
with respect to distinguished vector fields  ${\cal Z}$ on space forms, product spaces 
$\Sf^n\times \R$ and $\Hy^n\times \R$, where $\Sf^n$ and $\Hy^n$ are the 
$n$-dimensional sphere and hyperbolic space, respectively, and, more generally, 
on warped products $I\times_{\rho}\Q_\e^n$ of an open interval $I\subset \R$ 
and a space form $\Q_\e^n$. Starting from the observation that these properties
are invariant under conformal changes of the ambient metric, we provide new 
characterization and classification results 
of isometric immersions that satisfy either of those properties, 
or both of them simultaneously,
for several relevant instances of ${\cal Z}$ as well as simpler descriptions 
and proofs of some known ones for particular cases of  ${\cal Z}$ previously 
considered by many authors. Our methods also allow us to classify Euclidean 
submanifolds with the property that the normal components 
of their position vector fields are parallel with respect to the normal connection, 
and to give  alternative descriptions to those in \cite{byc3} of Euclidean
submanifolds whose tangent or normal components of their position vector fields 
have constant length. 
\end{abstract}

\section{Introduction}

   Let $M^m$ and $N^n$ be Riemannian manifolds of dimensions $m$ and $n$, respectively.
Given a vector field ${\cal Z}$ in $N^n$, it is a natural problem to investigate the 
isometric immersions $f\colon M^m\to N^n$ that have relevant geometric properties with 
respect to ${\cal Z}$.  
   
 For example, for which oriented hypersurfaces $f\colon M^m\to N^{m+1}$ does a unit 
normal vector field along $f$ make a constant angle with ${\cal Z}$? 
A related problem is to look for the hypersurfaces for which the tangent component of 
${\cal Z}$ is a principal direction of $f$ at any point of $M^m$.

    Equivalent versions of these problems can also be posed for isometric immersions 
$f\colon M^m\to N^n$ of arbitrary codimension $n-m$. Namely, the first one is equivalent 
to looking for the hypersurfaces for which either the tangent component of ${\cal Z}$ 
vanishes everywhere or it is nowhere vanishing and the ratio between the lengths of the 
normal and tangent components of ${\cal Z}$ is constant, and in this form the problem makes 
sense for submanifolds of arbitrary codimension. One may also consider the second of the 
preceding problems for submanifolds of any codimension by requiring the tangent component 
of ${\cal Z}$ to be a principal direction of \emph{all} shape operators of $f$. 
It will be convenient to refer to isometric immersions satisfying those conditions as 
isometric immersions having \emph{the constant ratio property} or 
\emph{the principal direction property}  with respect to ${\cal Z}$, respectively.
     
   These problems are particularly interesting when $N^n$ is endowed with 
distinguished vector fields ${\cal Z}$, for instance if $N^n$ is a space 
form or a Riemannian product $\Q_\e^n\times\R$, where $\Q_\e^n$ denotes either
the sphere $\Sf^n$ or the hyperbolic space $\Hy^n$ of dimension $n$,
depending on whether $\e=1$ or $\e=-1$, respectively. More generally, if 
$N^n$ is a warped product $I\times_{\rho}P$ of an open interval $I\subset \R$ 
with any Riemannian manifold $P$.
In particular, in Euclidean space one may consider the above problems
when ${\cal Z}$ is either a constant or the radial vector field, or when it 
is a Killing or a conformal Killing vector field. In a product space
$\Q_\e^n\times\R$ or, more generally, in any warped product $I\times_{\rho} P$,
a natural vector field ${\cal Z}$ to consider
is a unit  vector field $\frac{\partial}{\partial t}$ tangent to either the factor $\R$
or the factor $I$, respectively.
One may also consider those problems for the ``radial" vector fields in $\Q_\e^n$, that is,
the closed and conformal vector fields in those spaces.

Several authors have addressed particular instances of the above problems for special cases 
of the ambient space $N^n$ and of the vector field ${\cal Z}$. In particular, hypersurfaces
with the constant ratio  property with respect to the 
unit vector field ${\cal Z}=\frac{\partial}{\partial t}$ tangent to the factor $\R$ of a 
product space $\Sf^{n-1}\times \R$ and  $\Hy^{n-1}\times \R$, or with respect to a constant 
vector field in Euclidean space, were described in \cite{dfjv1}, \cite{dm} and 
\cite{to}, and submanifolds with the principal direction property with respect to those vector 
fields in \cite{dfj1}, \cite{dmn},  \cite{bt} and \cite{to}. 
The first of these problems was  studied
in \cite{dmvv} for surfaces in a warped product $I \times_{\rho} \R^2$ and for hypersurfaces 
of arbitrary dimension of any warped product $I\times_{\rho}P$ in \cite{gprh2}. 
The constant ratio  property with respect to the radial vector field in Euclidean space was 
investigated in \cite{m} for surfaces in $\R^3$, for Euclidean hypersurfaces of any 
dimension in \cite{byc1} and \cite{y}, and for submanifolds of arbitrary codimension in \cite{byc2}.  
Surfaces in $\R^3$ with the constant ratio property with respect to a Killing vector field  
were studied in \cite{mn}. 
The principal direction property with respect to
the radial vector field was investigated in \cite{mf} for surfaces in $\R^3$ and 
with respect to the vector field  ${\cal Z}=\frac{\partial}{\partial t}$ tangent to the factor $\R$
for hypersurfaces of a warped product $\R\times_{\rho}P$ in \cite{gprh}.
Constant mean curvature hypersurfaces of $\R\times_{\rho}P$ with this property
were characterized in \cite{sh}, yielding a  description of constant mean 
curvature hypersurfaces with the principal direction property with respect to 
closed and conformal vector fields in $\Q_\e^n$. 

 Our first main result is a description of the isometric immersions  $f\colon M^m\to \R^{n}$,
 for arbitrary values of the dimension $m$ and the codimension $n-m$, 
that have the constant ratio property with respect to a constant vector field, 
as well as of the isometric immersions  $f\colon M^m\to \Q_\epsilon^{n}\times \R$ that have the constant 
ratio property with respect to the unit vector field ${\cal Z}=\frac{\partial}{\partial t}$ tangent to the 
factor $\R$.

   Several of the remaining results of this article rely on the elementary but useful observation that, 
for an isometric immersion $f\colon M^m\to N^n$, both the principal direction and constant 
ratio properties with respect to a vector field ${\cal Z}$ on $N^n$ are invariant under conformal 
changes of the metric of the 
ambient space. Equivalently, if $\Psi\colon N^n\to \hat N^n$ is a (local) conformal diffeomorphism, 
then any of these properties holds for an isometric immersion $f\colon M^m\to N^n$  with respect to 
${\cal Z}\in \mathfrak{X}(N)$  if and only if it holds for $\hat f=\Psi\circ f$ with 
respect to $\hat{\cal Z} = \Psi_*{\cal Z}\in \mathfrak{X}(\hat N)$. 

By applying the preceding observation, we first extend the description of isometric immersions  
$f\colon M^m\to \Q_\epsilon^{n}\times \R$ with the constant ratio property with respect to 
the unit vector field ${\cal Z}=\frac{\partial}{\partial t}$ tangent to the factor $\R$ 
to the case in which the ambient space is any warped product $I\times_{\rho}\Q_\epsilon^{n}$, 
with $I\subset \R$ an open interval, making use of the fact that any warped product metric 
on $I\times\Q_\epsilon^{n}$ is conformal to the standard Riemannian product metric. 
We then use the well known representations of (open dense subsets of) $\Q_\epsilon^{n+1}$ 
as such a warped product to describe all ``loxodromic" isometric immersions into a space
form $\Q_\epsilon^{n+1}$, that is, isometric immersions into  $\Q_\epsilon^{n+1}$ that have the 
constant ratio property with respect to a ``radial" vector field in  $\Q_\epsilon^{n+1}$.  
    
   Then we apply the same basic observation to the well-known conformal diffeomorphism of  
$\Sf^{n-1}\times \R$ onto $\R^{n}\setminus \{0\}$, and give a similar description of all isometric 
immersions  $f\colon M^m\to \R^{n}\setminus \{0\}$ that have  the constant ratio property  with 
respect to the radial  vector field, making the classification in \cite{byc2} more explicit.  
In the hypersurface case we recover the main result of \cite{y} (see also \cite{byc1} and \cite{m}).

We use the same idea to classify isometric immersions  $f\colon M^m\to \R^{n}\setminus \{0\}$ 
that have  the principal direction  property  with respect to the radial  vector field, extending
for arbitrary values of $m$ and $n$ the main result of \cite{mf}, where this problem was studied 
for surfaces in $\R^3$.
We also determine the  isometric immersions $f\colon M^m\to \R^{n}\setminus \{0\}$ that have both 
the constant ratio and principal direction properties with respect to the radial  vector field.

Our methods also allow us to classify the related class of Euclidean submanifolds with the property
that the normal components of their position vector fields are parallel with respect to the normal
connection, and  to give an alternative description of Euclidean submanifolds whose tangent
or normal components of their position vector fields have constant length. The latter were first
described in \cite{byc3}, where they were called $T$-constant and $N$-constant submanifolds.

    Our description of isometric immersions  $f\colon M^m\to \R^{n}\setminus \{0\}$ that have 
the constant ratio property with respect to the radial vector field is thus based on the observation 
that any such isometric immersion is the composition $f=\Psi\circ \hat f$ of an isometric immersion 
$\hat f\colon M^m\to \Sf^{n-1}\times \R$ that has the constant ratio  property with respect to 
$\frac{\partial}{\partial t}$ with the conformal diffeomorphism 
$\Psi \colon \Sf^{n-1}\times \R\to \R^{n}\setminus \{0\}$.  
Notice that in the very particular case in which $m=1$ and $n=2$, that is, unit speed curves in $\R^2$, 
our approach reduces to the observation that the logarithmic spiral, the only unit speed plane curve 
that has the constant ratio property with respect to the radial vector field (with nonvanishing tangent 
and normal components of that vector field), is just the image under the conformal diffeomorphism 
of $\Sf^{1}\times \R$ onto $\R^{2}\setminus \{0\}$ of a circular helix in $\Sf^{1}\times \R$.
   
   The corresponding conformal covering map of  $\Hy^{n-1}\times \R$ onto $\R^{n}\setminus \R^{n-1}$ 
is then used to describe all isometric immersions  $f\colon M^m\to \R^{n}\setminus \{0\}$ that have 
either the constant ratio or the principal direction property with respect to a Killing vector field 
${\cal K}$ in $\R^n$ generating rotations around a subspace $\R^{n-1}$ of $\R^{n+1}$.
The first of these problems was investigated in \cite{mn} for surfaces in $\R^3$. However, their
classification misses a large class of examples (see Remark \ref{munteanu} below).

  As one further application of our basic observation, we describe all isometric immersions into Euclidean 
space that have the constant ratio property or the principal direction property with respect to any of the
vector fields that generate the Lie algebra of \emph{conformal} Killing vector fields in $\R^n$.

\section{Some basic lemmas}
  
  Given an isometric immersion $f\colon M^m\to N^n$ and  ${\cal Z}\in \mathfrak{X}(N)$, we  always 
denote by ${\cal Z}^T_f\in \mathfrak{X}(M)$ and ${\cal Z}^\perp_f\in \Gamma(N_fM)$ the tangent and normal 
vector fields, respectively, given by orthogonally decomposing ${\cal Z}$ into its tangent and normal 
components along $f$, i.e.,
  $$
  {\cal Z}(f(x))=f_*(x){\cal Z}^T_f(x)+{\cal Z}^\perp_f(x)\;\;\;\mbox{for all $x\in M^m$}.
  $$

\begin{definition} \label{def:cr}
\emph{Given a Riemannian manifold $N^n$ and ${\cal Z}\in \mathfrak{X}(N)$, we say that an isometric immersion 
$f\colon M^m\to N^n$ has
\begin{itemize}
\item[(i)] the \emph{constant ratio property with respect to ${\cal Z}$} either if ${\cal Z}^T_f$ 
vanishes identically  or if ${\cal Z}^T_f$ vanishes nowhere and $\|{\cal Z}^\perp_f\|/\|{\cal Z}^T_f\|$ is 
constant along $M^m$;
\item[(ii)] the \emph{principal direction property with respect to ${\cal Z}$} if  ${\cal Z}^T_f$ 
is an eigenvector of all shape operators of $f$ at all points of $M^m$.
\end{itemize}}
\end{definition}

   Clearly, an isometric immersion $f\colon M^m\to N^n$ has either the constant ratio or the principal direction 
property with respect to ${\cal Z}\in \mathfrak{X}(N)$ if and only if it has the same property with 
respect to $\lambda\mathcal Z$ for some (and hence for any) $\lambda \in C^{\infty}(N)$ with no zeroes along $f(M)$.

 In case ${\cal Z} \in \mathfrak{X}(N)$ is a \emph{parallel} vector field in $N^n$,  the next lemma gives 
equivalent conditions for an isometric immersion $f\colon M^m \to N^n$ to have the principal direction or 
the constant ratio property with respect to $\mathcal Z$, and in particular clarifies how these properties
are related in this case. We denote by $\alpha_f$ the second fundamental form of $f$, by $A^f_{\xi}$ its
shape operator with respect to $\xi\in \Gamma(N_fM)$, and by $\nabla^{\perp}$ its normal connection. 
Given a Riemannian manifold $M^m$ and a nowhere vanishing $X\in \mathfrak{X}(M)$, we denote by $\{X\}^\perp$ 
the distribution of codimension one in $M^m$ given by the orthogonal complements of $X(x)$ in each tangent 
space $T_xM$, $x\in M^m$.

\begin{lemma}\label{prop:paralellvf}
Let $N^n$ be a Riemannian manifold that admits a parallel vector 
field ${\cal Z}$ and let  $f\colon M^m \to N^n$ be an isometric immersion.  
Then  $f$ has the principal direction property with respect  to 
${\cal Z}$ if and only if ${\cal Z}^T_f$ is nowhere vanishing and ${\cal Z}^\perp_f$ is parallel in the normal 
connection along $\{{\cal Z}^T_f\}^\perp$. If $n=m+1$, this is 
equivalent to $\|{\cal Z}^T_f\|$ being a nonzero constant along 
$\{{\cal Z}^T_f\}^\perp$. Moreover, the following assertions are equivalent:
\begin{itemize}
\item[(i)] $f$ has the constant ratio property with respect to ${\cal Z}$;
\item[(ii)]  $\|{\cal Z}^T_f\|$ (or equivalently, $\|{\cal Z}^\perp_f\|$) is constant along $M^n$;
\item[(iii)] $A^f_{{\cal Z}^\perp_f}{\cal Z}^T_f=0$;
\item[(iv)] Either ${\cal Z}^T_f$ is everywhere vanishing or the integral curves of ${\cal Z}^T_f$ are geodesics.
\end{itemize} 
In particular, if $n=m+1$ then the constant ratio property with respect to ${\cal Z}$ 
implies the principal direction property with respect  to ${\cal Z}$, if the constant value of $\|{\cal Z}^T_f\|$
is nonzero.
\end{lemma}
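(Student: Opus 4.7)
The plan is to exploit that $\mathcal{Z}$ is parallel by expanding the orthogonal decomposition $\mathcal{Z}\circ f = f_*\mathcal{Z}^T_f + \mathcal{Z}^\perp_f$ via the Gauss and Weingarten formulas. Applying the induced connection on $f^*TN$ in the direction of $X\in\mathfrak{X}(M)$ annihilates the left-hand side (because $\widetilde\nabla\mathcal{Z}=0$), and separating the tangent and normal components on $f$ then immediately yields the two fundamental identities
$$\nabla_X\mathcal{Z}^T_f = A^f_{\mathcal{Z}^\perp_f}X\qquad\text{and}\qquad \nabla^\perp_X\mathcal{Z}^\perp_f = -\alpha_f(X,\mathcal{Z}^T_f),\qquad X\in\mathfrak{X}(M),\qquad(\star)$$
from which every assertion of the lemma will follow mechanically.

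For the principal direction characterization I note that, at any point, $\mathcal{Z}^T_f$ is a nonzero eigenvector of every shape operator $A^f_\xi$ if and only if $\langle\alpha_f(\mathcal{Z}^T_f,X),\xi\rangle=0$ for every $X\perp\mathcal{Z}^T_f$ and every normal $\xi$, i.e., $\alpha_f(\mathcal{Z}^T_f,X)=0$ along $\{\mathcal{Z}^T_f\}^\perp$; by the second identity of $(\star)$ this is exactly $\nabla^\perp_X\mathcal{Z}^\perp_f=0$ for all such $X$. In the hypersurface case $n=m+1$ I write $\mathcal{Z}^\perp_f=\phi N$ for a local unit normal $N$ and $\phi=\langle\mathcal{Z},N\rangle$, so that $\nabla^\perp_X\mathcal{Z}^\perp_f=X(\phi)N$; combined with the constancy of $\|\mathcal{Z}\|^2=\|\mathcal{Z}^T_f\|^2+\phi^2$, which forces $X(\|\mathcal{Z}^T_f\|^2)=-2\phi\,X(\phi)$, the vanishing of $X(\phi)$ along $\{\mathcal{Z}^T_f\}^\perp$ is equivalent to that of $X(\|\mathcal{Z}^T_f\|)$ there.

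For the four equivalences: (i) $\Leftrightarrow$ (ii) is immediate from $\|\mathcal{Z}^T_f\|^2+\|\mathcal{Z}^\perp_f\|^2=\|\mathcal{Z}\|^2$ being constant. For (ii) $\Leftrightarrow$ (iii) I pair the first identity of $(\star)$ with $\mathcal{Z}^T_f$ and use the symmetry of $A^f_{\mathcal{Z}^\perp_f}$ to get
$$\tfrac{1}{2}X(\|\mathcal{Z}^T_f\|^2)=\langle\nabla_X\mathcal{Z}^T_f,\mathcal{Z}^T_f\rangle=\langle A^f_{\mathcal{Z}^\perp_f}\mathcal{Z}^T_f,X\rangle,\qquad X\in\mathfrak{X}(M),$$
so the gradient of $\|\mathcal{Z}^T_f\|^2$ equals $2A^f_{\mathcal{Z}^\perp_f}\mathcal{Z}^T_f$ and the equivalence is transparent. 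Finally, setting $X=\mathcal{Z}^T_f$ in the same identity yields $\nabla_{\mathcal{Z}^T_f}\mathcal{Z}^T_f=A^f_{\mathcal{Z}^\perp_f}\mathcal{Z}^T_f$, so (iii) is precisely (iv), the condition that the integral curves of $\mathcal{Z}^T_f$ are (affinely parametrized) geodesics.

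The concluding hypersurface assertion is then immediate: under (i) with $\|\mathcal{Z}^T_f\|$ nonzero, (ii) gives that $\|\mathcal{Z}^T_f\|$ is a positive constant on all of $M$, hence in particular nonzero and constant along $\{\mathcal{Z}^T_f\}^\perp$, and the hypersurface characterization supplies the principal direction property. The argument presents no substantial technical obstacle; the only real care is in conventions, namely that ``$\mathcal{Z}^T_f$ is an eigenvector of all shape operators'' is read as requiring $\mathcal{Z}^T_f$ to be nonzero (this is what produces the ``nowhere vanishing'' clause in the first equivalence), and that (iv) is interpreted as $\nabla_{\mathcal{Z}^T_f}\mathcal{Z}^T_f=0$; under the weaker ``geodesic up to reparametrization'' reading one would obtain only that $A^f_{\mathcal{Z}^\perp_f}\mathcal{Z}^T_f$ is proportional to $\mathcal{Z}^T_f$, which is strictly weaker than (iii).
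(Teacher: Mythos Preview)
Your proof is correct and follows essentially the same approach as the paper: both derive the two identities $\nabla_X\mathcal{Z}^T_f=A^f_{\mathcal{Z}^\perp_f}X$ and $\alpha_f(X,\mathcal{Z}^T_f)=-\nabla^\perp_X\mathcal{Z}^\perp_f$ from $\widetilde\nabla_X\mathcal{Z}=0$ via the Gauss and Weingarten formulas, and then read off all the assertions from these. The only cosmetic difference is that for (ii)\,$\Leftrightarrow$\,(iii) the paper pairs the second identity with $\mathcal{Z}^\perp_f$ to track $\|\mathcal{Z}^\perp_f\|^2$, whereas you pair the first with $\mathcal{Z}^T_f$ to track $\|\mathcal{Z}^T_f\|^2$; your caveat about the reading of (iv) is apt and matches the paper's implicit convention $\nabla_{\mathcal{Z}^T_f}\mathcal{Z}^T_f=0$.
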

\begin{proof}
Taking the tangent and normal components of 
$\tilde \nabla_X {\cal Z}=0$
for  $X\in \mathfrak{X}(M)$, where $\tilde \nabla$ is the Levi-Civita connection of $N^n$, and using the Gauss and 
Weingarten equations of $f$, yield 
\begin{equation}\label{eq:NablaT}
\nabla_X {\cal Z}^T_f= A^f_{{\cal Z}^\perp_f} X
\end{equation}
and 
\begin{eqnarray}\label{eq:Derivadaeta}
\alpha_{ f}(X,{\cal Z}^T_f)=-\nabla_X^\perp {\cal Z}^\perp_f,
\end{eqnarray}
 where $\nabla$ is the Levi-Civita connection of $M^m$. The first  
assertion in the statement is an immediate  consequence of (\ref{eq:Derivadaeta}), and the second 
one is clear, for ${\cal Z}$ has constant length. The latter observation also gives the equivalence 
between $(i)$ and $(ii)$. By \eqref{eq:Derivadaeta}, conditions 
$(ii)$ and  $(iii)$ are equivalent, and the latter holds if and only if $\nabla_{{\cal Z}^T_f}{\cal Z}^T_f=0$ 
by \eqref{eq:NablaT}, which yields the equivalence between $(iii)$ and $(iv)$. The last assertion is now 
an immediate consequence of either $(ii)$ or $(iii)$.
\end{proof}

The next lemma plays a key role in the sequel.
  
\begin{lemma}\label{le:basic}
Let $f\colon M^m\to N^n$ be an isometric immersion, let $\Psi\colon N^n\to\hat N^n$
be a local conformal diffeomorphism and set $\hat f=\Psi\circ f$. Let 
${\cal Z}$ and $\hat {\cal Z}$ be vector fields on $N^n$ and $\hat{N}^n$ 
that are $\Psi$-related, that is, $\hat {\cal Z}(\Psi(y))=\Psi_*(y){\cal Z}(y)$ 
for all $y\in N^n$. Then the following assertions hold:
\begin{itemize}
\item[(i)] $\hat{\cal Z}^T_{\hat f}(x)={\cal Z}^T_f(x)$ and 
$\hat{\cal Z}^\perp_{\hat f}(x)=\Psi_*(f(x)){\cal Z}^\perp_f(x)$ for all 
$x\in M^m$;
\item[(ii)] if ${\cal Z}^T_f$ is nowhere vanishing, then the ratios $\|{\cal Z}^\perp_f\|/\|{\cal Z}^T_f\|$ 
and $\|\hat{\cal Z}^\perp_{\hat f}\|/\|\hat{\cal Z}^T_{\hat f}\|$ coincide;
\item[(iii)] $f$ has the constant ratio property with respect to  
${\cal Z}$ if and only if $\hat f$ has the constant ratio property with respect to  
$\hat {\cal Z}$; 
\item[(iv)] $f$ has the principal direction property with respect to ${\cal Z}$  
if and only if $\hat f$ has the principal direction property with respect to 
$\hat{\cal Z}$.
\end{itemize}
\end{lemma}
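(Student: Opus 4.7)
The plan is to exploit the conformality of $\Psi$ and carefully track how its conformal factor rescales lengths. Write $\Psi^*\hat g=\lambda^2 g$ for a positive function $\lambda$ on $N^n$. The crucial fact is that $\Psi_*$ preserves angles, so at each $y=f(x)$ it carries $f_*(T_xM)$ onto $\hat f_*(T_xM)$ and maps the normal space $N_fM_x$ isomorphically onto $N_{\hat f}M_x$.

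For (i), I would apply $\Psi_*(f(x))$ to the orthogonal decomposition ${\cal Z}(f(x))=f_*(x){\cal Z}^T_f(x)+{\cal Z}^\perp_f(x)$. Using $\hat{\cal Z}(\hat f(x))=\Psi_*(f(x)){\cal Z}(f(x))$ together with the chain rule $\hat f_*(x)=\Psi_*(f(x))\circ f_*(x)$, this yields $\hat{\cal Z}(\hat f(x))=\hat f_*(x){\cal Z}^T_f(x)+\Psi_*(f(x)){\cal Z}^\perp_f(x)$, and uniqueness of the orthogonal decomposition of $\hat{\cal Z}$ along $\hat f$ gives (i). Assertion (ii) is then bookkeeping: $\hat f$ induces on $M^m$ the metric $(\lambda\circ f)^2 f^*g$, so $\|\hat{\cal Z}^T_{\hat f}\|_{\hat f^*\hat g}=(\lambda\circ f)\|{\cal Z}^T_f\|$, while $\|\Psi_*{\cal Z}^\perp_f\|_{\hat g}=(\lambda\circ f)\|{\cal Z}^\perp_f\|$, and the common factor cancels in the ratio. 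Statement (iii) follows from (ii) and the observation that ${\cal Z}^T_f$ vanishes exactly where $\hat{\cal Z}^T_{\hat f}$ does.

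The main work is in (iv). I would invoke the classical transformation formula for the second fundamental form under a conformal change of ambient metric, derived by plugging the well-known relation between the Levi-Civita connections of $g$ and $\lambda^2 g$ on $N^n$ into the Gauss formula and then transporting the result by the isometry $\Psi\colon(N^n,\lambda^2 g)\to(\hat N^n,\hat g)$. Setting $\sigma=\log\lambda$, this yields
\[
\alpha_{\hat f}(X,Y)=\Psi_*\alpha_f(X,Y)-\<X,Y\>_{f^*g}\,\Psi_*(\n\sigma)^\perp_f,
\]
where $(\n\sigma)^\perp_f$ is the normal component along $f$ of the $g$-gradient of $\sigma$. When $X$ is orthogonal to ${\cal Z}^T_f$ in $f^*g$---equivalently, orthogonal to $\hat{\cal Z}^T_{\hat f}$ in the conformally related induced metric---the correction vanishes and $\alpha_{\hat f}({\cal Z}^T_f,X)=\Psi_*\alpha_f({\cal Z}^T_f,X)$. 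Since $\Psi_*$ is an isomorphism of normal spaces, one side is zero iff the other is. Finally, the principal direction property is equivalent to $\alpha_f({\cal Z}^T_f,X)=0$ for all $X\perp{\cal Z}^T_f$ (by testing the eigenvector condition for $A^f_\eta$ against an arbitrary normal $\eta$), and similarly for $\hat f$, so (iv) follows. The only real obstacle is the careful derivation of this transformation formula; the remaining verifications are linear algebra.
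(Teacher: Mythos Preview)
Your proof is correct and follows essentially the same route as the paper: part (i) via applying $\Psi_*$ to the orthogonal decomposition and using the chain rule, part (ii) by tracking the conformal factor on both components, and part (iv) via the same transformation formula for the second fundamental form under a conformal change of the ambient metric. If anything, your argument for (iv) is spelled out more fully than the paper's, which simply records the formula $\alpha_{\hat f}(X,Y)=\Psi_*\alpha_f(X,Y)-\frac{1}{\varphi}\<X,Y\>\Psi_*(\grad\varphi)^\perp$ and asserts that (iv) follows.
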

\begin{proof} 
The assertion in $i)$ follows from
\begin{eqnarray*}
\hat f_*(x)\hat{\cal Z}^T_{\hat f}(x)+\hat{\cal Z}^\perp_{\hat f}(x)&=&\hat {\cal Z}(\hat f(x))\\
  &=&\Psi_*(f(x)){\cal Z}(f(x))\\
&=&\Psi_*(f(x))f_*(x){\cal Z}^T_f(x)+\Psi_*(f(x)){\cal Z}^\perp_f(x)\\
&=&(\Psi\circ f)_*(x){\cal Z}^T_f(x)+\Psi_*(f(x)){\cal Z}^\perp_f(x)\\
&=&\hat f_*(x){\cal Z}^T_{ f}(x)+\Psi_*(f(x)){\cal Z}^\perp_f(x).
\end{eqnarray*}
Now, if $\va\in C^{\infty}(N)$ is the conformal factor of $\Psi$, that is,
$$\<\Psi_*X, \Psi_*Y\>=\va^2(y)\<X, Y\>$$ for all $y\in N^n$ and $X, Y\in T_yN$, then
\begin{eqnarray*}\<\hat{\cal Z}^T_{\hat f}(x),\hat{\cal Z}^T_{\hat f}(x)\>_{\hat f}
&=&\<\hat f_*(x)\hat{\cal Z}^T_{\hat f}(x), \hat f_*(x)\hat{\cal Z}^T_{\hat f}(x)\>\\
&=&\<\Psi_*(f(x)) f_*(x){\cal Z}^T_f(x), \Psi_*(f(x)) f_*(x){\cal Z}^T_f(x)\>\\
&=&(\va(f(x))^2\<f_*(x){\cal Z}^T_f(x),  f_*(x){\cal Z}^T_f(x)\>\\
&=&(\va(f(x))^2\<{\cal Z}^T_f(x), {\cal Z}^T_f(x)\>_f
\end{eqnarray*}
and 
\begin{eqnarray*}\<\hat{\cal Z}^\perp_{\hat f}(x),\hat{\cal Z}^\perp_{\hat f}(x)\>
&=&\<\Psi_*(f(x)){\cal Z}^\perp_f(x),\Psi_*(f(x)){\cal Z}^\perp_f(x)\>\\
&=&(\va(f(x))^2\<{\cal Z}^\perp_f(x), {\cal Z}^\perp_f(x)\>
\end{eqnarray*}
for all $x\in M^m$, where $\<\;, \;\>_f$ and $\<\;, \;\>_{\hat f}$ denote the metrics 
induced by $f$ and $\hat f$, respectively.
The assertion in $(ii)$ is an immediate consequence of the preceding relations,
and $(iii)$ follows by combining $(i)$ and $(ii)$. Finally, $(iv)$ follows from 
the  relation 
\[
\alpha_{\hat f}(X,Y)=
\Psi_*\alpha_f(X,Y)-\frac{1}{\varphi}\<X,Y\>\Psi_*(\grad \varphi)^\perp
\]
between the second fundamental forms $\alpha_f$, $\alpha_{\hat f}$ 
of $f$ and $\hat f$, respectively. 
\end{proof}

  As a consequence of Lemmas \ref{prop:paralellvf} and \ref{le:basic},
for a hypersurface $f\colon M^m \to N^{m+1}$ the constant ratio property with respect 
to ${\cal Z}\in \mathfrak{X}(N)$ implies the principal direction property with 
respect  to ${\cal Z}$ (if ${\cal Z}^T_f$ is nowhere vanishing) also when ${\cal Z}$ 
is $\Psi$-related to a parallel vector field $\hat {\cal Z}\in \mathfrak{X}(\hat N)$ 
under a (local) conformal diffeomorphism $\Psi\colon N^{m+1}\to \hat N^{m+1}$.
If $f\colon M^m \to N^{n}$ has arbitrary codimension, the two properties are related 
as follows.

\begin{corollary}\label{prop:charac2} 
Let $N^{n}$ be a Riemannian manifold and let ${\cal Z}\in \mathfrak{X}(N)$ 
be $\Psi$-related to a parallel vector field $\hat{\cal Z}\in\mathfrak{X}(\hat N)$
under a (local) conformal diffeomorphism $\Psi\colon N^{n}\to \hat N^{n}$.
Then an isometric immersion $f\colon M^m\to N^{n}$  has  the  principal direction
property with respect to ${\cal Z}$ if and only if ${\cal Z}^T_f$ is nowhere vanishing, 
the ratio $\|{\cal Z}^\perp_f\|/\|{\cal Z}^T_f\|$ is constant along 
$\{{\cal Z}^T_f\}^\perp$ and either ${\cal Z}^\perp_f$ is everywhere vanishing or 
$\zeta_f= {\cal Z}^\perp_f/\|{\cal Z}^\perp_f\|$ is 
parallel in the normal connection along $\{{\cal Z}^T_f\}^\perp$.
\end{corollary}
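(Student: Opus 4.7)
The plan is to reduce to the parallel case via conformal invariance. By Lemma~\ref{le:basic}(iv), $f$ has the principal direction property with respect to $\mathcal{Z}$ if and only if $\hat f=\Psi\circ f$ has it with respect to $\hat{\mathcal{Z}}$, and since $\hat{\mathcal{Z}}$ is parallel on $\hat N^n$, Lemma~\ref{prop:paralellvf} rewrites this as: $\hat{\mathcal{Z}}^T_{\hat f}$ is nowhere vanishing and $\hat{\mathcal{Z}}^\perp_{\hat f}$ is parallel in the normal connection $\hat\nabla^\perp$ of $\hat f$ along $\{\hat{\mathcal{Z}}^T_{\hat f}\}^\perp$. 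The remaining work is to translate these two hat-side conditions back into conditions on $\mathcal{Z}$ and $f$.

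The first translation is immediate: Lemma~\ref{le:basic}(i) gives $\hat{\mathcal{Z}}^T_{\hat f}=\mathcal{Z}^T_f$ and, because $\Psi$ is conformal, also $\{\hat{\mathcal{Z}}^T_{\hat f}\}^\perp=\{\mathcal{Z}^T_f\}^\perp$ as distributions on $M^m$. For the parallelism condition, on the open set where $\hat{\mathcal{Z}}^\perp_{\hat f}$ is nonzero I would decompose $\hat{\mathcal{Z}}^\perp_{\hat f}=\|\hat{\mathcal{Z}}^\perp_{\hat f}\|\,\hat\zeta_{\hat f}$ and use that $\|\hat\zeta_{\hat f}\|\equiv 1$ forces $\hat\nabla^\perp\hat\zeta_{\hat f}\perp\hat\zeta_{\hat f}$, which splits $\hat\nabla^\perp\hat{\mathcal{Z}}^\perp_{\hat f}=0$ along $\{\mathcal{Z}^T_f\}^\perp$ into two independent parts: constancy of $\|\hat{\mathcal{Z}}^\perp_{\hat f}\|$ along that distribution, and $\hat\nabla^\perp$-parallelism of $\hat\zeta_{\hat f}$ along it. For the norm part, the parallelism of $\hat{\mathcal{Z}}$ makes $\|\hat{\mathcal{Z}}^T_{\hat f}\|^2+\|\hat{\mathcal{Z}}^\perp_{\hat f}\|^2=\|\hat{\mathcal{Z}}\|^2$ constant on $M^m$, so constancy of $\|\hat{\mathcal{Z}}^\perp_{\hat f}\|$ along $\{\mathcal{Z}^T_f\}^\perp$ is equivalent to constancy of the ratio $\|\hat{\mathcal{Z}}^\perp_{\hat f}\|/\|\hat{\mathcal{Z}}^T_{\hat f}\|$ along that distribution, which by Lemma~\ref{le:basic}(ii) coincides with $\|\mathcal{Z}^\perp_f\|/\|\mathcal{Z}^T_f\|$. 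The branch $\hat{\mathcal{Z}}^\perp_{\hat f}\equiv 0$ corresponds to $\mathcal{Z}^\perp_f\equiv 0$ and produces the vacuous alternative in the statement.

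The main obstacle is transferring the parallelism of $\hat\zeta_{\hat f}$ under $\hat\nabla^\perp$ to parallelism of $\zeta_f$ under $\nabla^\perp$. My strategy is to identify $\hat N^n$ with $N^n$ via $\Psi$ so that $\hat f$ becomes $f$ but the target carries two conformally related metrics $\hat g=\varphi^2 g$. Applying the standard conformal change formula $\hat{\tilde\nabla}_XY=\tilde\nabla_XY+X(\log\varphi)Y+Y(\log\varphi)X-g(X,Y)\grad_g(\log\varphi)$ to a normal vector field $\xi$ with $X\in TM$, and using that $g(X,\xi)=0$ together with the fact that the splitting $TN=f_*TM\oplus N_fM$ is the same for both metrics (orthogonality being conformally invariant), the normal component reads $\hat\nabla^\perp_X\xi=\nabla^\perp_X\xi+X(\log\varphi)\xi$. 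Since the $\hat g$-unit normal corresponding to $\zeta_f$ is $\hat\zeta_{\hat f}=\varphi^{-1}\zeta_f$, a short substitution yields $\hat\nabla^\perp_X\hat\zeta_{\hat f}=\varphi^{-1}\nabla^\perp_X\zeta_f$, so the two parallelism statements are equivalent along $\{\mathcal{Z}^T_f\}^\perp$, closing the chain of equivalences.
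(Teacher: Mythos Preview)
Your proof is correct and follows essentially the same route as the paper: reduce via Lemma~\ref{le:basic}(iv) to $\hat f$ and the parallel field $\hat{\cal Z}$, apply Lemma~\ref{prop:paralellvf}, split the parallelism of $\hat{\cal Z}^\perp_{\hat f}$ into a norm condition (turned into a ratio via $\|\hat{\cal Z}\|$ constant and transferred by Lemma~\ref{le:basic}(ii)) and a unit-direction condition, and transfer the latter using the conformal change formula $\hat\nabla^\perp_X\xi=\nabla^\perp_X\xi+X(\log\varphi)\xi$ (the paper's equation~\eqref{eq:nderiv}). Your explicit substitution $\hat\zeta_{\hat f}=\varphi^{-1}\zeta_f$ leading to $\hat\nabla^\perp_X\hat\zeta_{\hat f}=\varphi^{-1}\nabla^\perp_X\zeta_f$ makes the final equivalence more transparent than the paper, which simply asserts it from~\eqref{eq:nderiv}.
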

\begin{proof}
Let $\hat f=\Psi\circ f\colon M^m\to \hat N^{n}$. By the first assertion in
Lemma \ref{prop:paralellvf}, $\hat f$ has the  principal direction property with 
respect to $\hat{\cal Z}$ if and only if $\hat{\cal Z}^T_f$ is nowhere vanishing and
$\hat{\cal Z}^\perp_{\hat f}$ is parallel in the normal connection along 
$\{\hat{\cal Z}^T_{\hat f}\}^\perp$. 
The latter condition is equivalent to  
$\|\hat{\cal Z}^\perp_{\hat f}\|$ being  constant along 
$\{\hat{\cal Z}^T_{\hat f}\}^\perp$ and, if this constant is nonzero, to the unit vector 
field $\hat{\cal Z}^\perp_{\hat f}/\|\hat{\cal Z}^\perp_{\hat f}\|$ being parallel in 
the normal connection along $\{\hat{\cal Z}^T_{\hat f}\}^\perp$. 
Since $\hat{\cal Z}$ has constant length, $\|\hat{\cal Z}^\perp_{\hat f}\|$ 
being a nonzero constant along $\{\hat{\cal Z}^T_{\hat f}\}^\perp$ is  equivalent to 
the ratio $\|\hat{\cal Z}^\perp_{\hat f}\|/\|\hat{\cal Z}^T_{\hat f}\|$ being 
constant along $\{\hat{\cal Z}^T_{\hat f}\}^\perp$.
Now, since $\Psi\colon N^{n}\to \hat N^{n}$ is a (local) conformal 
diffeomorphism,  $\hat\nabla_X^\perp \hat{\cal Z}^\perp_{\hat f}$ and 
$\nabla_X^\perp {\cal Z}^\perp_{f}$ are related by
\begin{eqnarray}\label{eq:nderiv}
\hat\nabla_X^\perp \hat{\cal Z}^\perp_{\hat f}&=&\Psi_* \, 
\nabla_X^\perp {\cal Z}^\perp_{f} + \frac{1}{\varphi\circ f} 
X(\varphi\circ f)\Psi_*{\cal Z}^\perp_{f}\nonumber\vspace{1ex}\\
&=&\Psi_* \, 
\nabla_X^\perp {\cal Z}^\perp_{f} 
+ \frac{1}{\varphi\circ f} 
X(\varphi\circ f)\hat{\cal Z}^\perp_{\hat f}
\end{eqnarray}
where, as above, $\varphi$ is the conformal factor of $\Psi$. 
Therefore, if $\hat{\cal Z}^\perp_{\hat f}$ is nowhere vanishing, then the unit vector 
field $\hat{\cal Z}^\perp_{\hat f}/\|\hat{\cal Z}^\perp_{\hat f}\|$ is parallel in the 
normal connection along $\{\hat{\cal Z}^T_{\hat f}\}^\perp$ if and only 
if the unit vector field ${\cal Z}^\perp_{f}/\|{\cal Z}^\perp_{f}\|$ is parallel 
in the normal connection along $\{{\cal Z}^T_{f}\}^\perp$. Moreover, 
by parts $(i)$ and $(ii)$ of Lemma \ref{le:basic}, if $\hat{\cal Z}^T_{\hat f}$ is
nowhere vanishing, then the ratio 
$\|\hat{\cal Z}^\perp_{\hat f}\|/\|\hat{\cal Z}^T_{\hat f}\|$ is
constant along $\{\hat{\cal Z}^T_{\hat f}\}^\perp$ if and only if the ratio 
$\|{\cal Z}^\perp_{f}\|/\|{\cal Z}^T_{f}\|$ is constant along 
$\{{\cal Z}^T_{f}\}^\perp$. 
\end{proof}
 
\section{Constant ratio  property  with respect to $\frac{\partial}{\partial t}$}
\label{crpconst}

In this section we investigate the isometric immersions 
$f\colon\, M^m \to \Q_\epsilon^n\times \R$, with $m\geq 2$, into the Riemannian product 
of $\Q_\epsilon^n$ and $\R$, that have the 
constant ratio property with respect to the unit vector field 
${\cal Z}=\frac{\partial}{\partial t}$, tangent to the factor $\R$. We may assume that
$\epsilon \in \{-1,0,1\}$, and we will use the following models for $\Q_0^n$, $ \Q_1^n$ 
and $ \Q_{-1}^n$, respectively:
\begin{align}
& \R^n , \label{modelQ0} \\
& \Sf^n = \left\{ (x_1,\ldots,x_{n+1}) \in \R^{n+1}: x_1^2 + \ldots + x_{n+1}^2 = 1 \right\}, \label{modelQ1} \\
& \Hy^n = \left\{ (x_1,\ldots,x_{n+1}) \in \R_1^{n+1}:x_1^2 + \ldots + x_n^2 - x_{n+1}^2 = -1, x_{n+1} > 0 \right\}, 
\label{modelQ-1}
\end{align}
which immediately give models for $\Q^n_0 \times \R$, $\Q^n_1 \times \R$ and 
$\Q^n_{-1} \times \R$ as subsets of $\R^{n+1}$, $\R^{n+2}$ and $\R^{n+2}_1$, respectively. 
In all cases, the last coordinate on the ambient space coincides with the coordinate on the 
factor $\R$ in $\Q^n_{\epsilon} \times \R$, which we always denote by $t$. 
In particular, the vector field $\frac{\partial}{\partial t}$ can be seen as a constant unit 
vector field on $\R^{n+1}$, $\R^{n+2}$ or $\R^{n+2}_1$, respectively, and for $\epsilon=0$ 
the problem at hand comes down to finding isometric immersions into $\R^{n+1}$ having the 
constant ratio property with respect to a constant vector field.

Note that, in all three cases, two classes of trivial examples of submanifolds of $\Q_\epsilon^n\times \R$ 
with the constant ratio property with respect to ${\cal Z}=\frac{\partial}{\partial t}$ occur. 
First, if $N^{m-1}$ is a submanifold of $\Q_\e^n$, then any open subset of the product immersion 
$N^{m-1}\times \R$ into $\Q_\epsilon^n\times \R$ has everywhere vanishing normal vector field 
${\cal Z}_f^\perp$. These examples are called \emph{vertical cylinders}. 
On the other hand, any submanifold of a \emph{horizontal slice} $\Q_\epsilon^n\times \{t_0\}$, 
where $t_0\in \R$ is fixed, has everywhere vanishing tangent vector field ${\cal Z}_f^T$. 
In the following we describe how any other example arises. 

Let $M^m=J\times N^{m-1}$ be a product manifold endowed with a
polar metric
\be\label{eq:polarmet2}
d\sigma^2=\pi_1^*ds^2+\pi_2^*g_s,
\ee
where $\pi_1\colon M^m\to J$ and 
$\pi_2\colon M^m\to N^{m-1}$ are the canonical projections, $ds^2$ is 
the standard metric on $J$ and $\{g_s\}_{s\in J}$ is a one-parameter 
family of metrics on $N^{m-1}$ indexed on $J$.  That the metric of 
$M^m=J\times N^{m-1}$ is the polar metric (\ref{eq:polarmet2}) is equivalent
to requiring the metric to be orthogonal and the curves $s\mapsto (s,x)$, $x\in N^{m-1}$, 
to be geodesics in $M^m$ (see Proposition $2.3$ of \cite{to2}).
A particular case of a polar metric (\ref{eq:polarmet2}) is a warped product
metric $$\pi_1^*ds^2+(\rho\circ \pi_1)^2\pi_2^*g$$
for some $\rho\in C^{\infty}(J)$ and some fixed metric $g$ on $N^{m-1}$, which 
corresponds to the case in which all metrics $g_s$, $s\in J$, are
homothetical to a fixed metric $g$ on $N^{m-1}$.  

\begin{theorem}\label{thm:CRmain} 
Let $\phi\colon M^m\to \Q_\epsilon^n\subset \R^{n+1}_{(1)}$, $m\geq 2$, 
$\epsilon \in \{-1, 0, 1\}$, be an isometric immersion of a product manifold
$M^m=I\times N^{m-1}$ endowed with a polar metric. Then the map
$f\colon\, M^m\to \Q_\epsilon^n\times \R$ given by
\be\label{eq:fCRddt}
f(s,x)=(\phi(s,x), As),\;\;A\neq 0,
\ee
is an immersion with the constant 
ratio property with respect to the unit vector field 
${\cal Z}=\frac{\partial}{\partial t}$ tangent to the factor $\R$.

Conversely, if $f\colon\, M^m\to \Q_\epsilon^n\times \R$, $m\geq 2$, 
$\epsilon \in \{-1, 0, 1\}$, is an  isometric immersion with  the constant 
ratio property with respect to ${\cal Z}=\frac{\partial}{\partial t}$, then 
either $f(M^m)$ is an open part of a vertical cylinder, or it is contained 
in a horizontal slice, or $f$ is locally given as above 
(globally, with $J=\R$, if the integral curves of ${\cal Z}_{ f}^T$ 
are defined on $\R$).
\end{theorem}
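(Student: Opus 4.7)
I would split Theorem~\ref{thm:CRmain} into its two directions and, in each, exploit the fact that $\partial/\partial t$ is parallel in the Riemannian product $\Q_\epsilon^n\times \R$, so that Lemma~\ref{prop:paralellvf} applies directly.

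For the direct direction, assume $\phi\colon (M^m, d\sigma^2)\to \Q_\epsilon^n$ is isometric with polar metric $d\sigma^2 = ds^2 + g_s$, and set $f(s,x) = (\phi(s,x), As)$. Since $f_*\partial_s = (\phi_*\partial_s, A\partial/\partial t)$ and $f_*v = (\phi_*v, 0)$ for $v$ tangent to the slice $\{s\}\times N^{m-1}$, a short computation shows that the metric induced by $f$ is the orthogonal one $(1+A^2)ds^2 + g_s$. Solving $\langle f_*Y, f_*Z\rangle = \langle \partial/\partial t, f_*Z\rangle$ for $Y$ yields ${\cal Z}^T_f = \frac{A}{1+A^2}\partial_s$, whence $\|{\cal Z}^T_f\|^2 = \frac{A^2}{1+A^2}$ and $\|{\cal Z}^\perp_f\|^2 = \frac{1}{1+A^2}$, so the ratio equals the constant $1/|A|$.

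For the converse, set $h = t\circ f$, so that $\nabla h = {\cal Z}^T_f$. If ${\cal Z}^T_f$ vanishes identically, then $h$ is locally constant and $f(M)$ lies in a horizontal slice; if instead ${\cal Z}^\perp_f$ vanishes identically, then $f_*{\cal Z}^T_f = \partial/\partial t$, so the image of every integral curve of ${\cal Z}^T_f$ is a vertical line and a local-foliation argument identifies $f(M)$ as a vertical cylinder. In the remaining generic case the ratio $\|{\cal Z}^\perp_f\|/\|{\cal Z}^T_f\| = c\neq 0$ is constant, and Lemma~\ref{prop:paralellvf} yields both that $\|{\cal Z}^T_f\| = \kappa := 1/\sqrt{1+c^2}$ is a nonzero constant and that the integral curves of ${\cal Z}^T_f$ are geodesics in $M^m$.

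I would then parametrize $M^m$ by the flow of the unit geodesic vector field $T = \nabla h/\kappa$: fix (locally, or globally if the flow is complete) a level set $N^{m-1} = h^{-1}(0)$ and let $\Phi(s,x)$ be the point at arclength $s$ along the integral curve through $x$. By the Gauss lemma the $f$-induced metric takes the orthogonal form $ds^2 + g^f_s$ in these coordinates, and $h = \kappa s$ along the $s$-curves. Hence $f(s,x) = (\phi(s,x), \kappa s)$ with $\phi$ the projection of $f$ onto the $\Q_\epsilon^n$-factor, and a direct computation gives $\|\phi_*\partial_s\|^2 = 1-\kappa^2$ and $\phi_*v = f_*v$ for $v$ tangent to the slices, so $\phi$ is an isometric immersion when $M^m$ is endowed with the metric $(1-\kappa^2)ds^2 + g^f_s$. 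Rescaling by $\tilde s = s\sqrt{1-\kappa^2}$ converts this into the polar metric $d\tilde s^2 + \tilde g_{\tilde s}$ and recasts $f$ as $(\tilde\phi(\tilde s,x), A\tilde s)$ with $A = \kappa/\sqrt{1-\kappa^2}\neq 0$. The main subtlety, and where I expect the bulk of the bookkeeping to lie, is tracking the two distinct metrics on $M^m$ simultaneously — the $f$-induced one and the polar one with respect to which $\phi$ becomes isometric — which differ by a factor along the $s$-direction; the rescaling at the end is precisely what reconciles them.
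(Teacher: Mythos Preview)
Your proof is correct and follows essentially the same route as the paper's. The paper's converse also identifies $h=\langle f,{\cal Z}\rangle$ as having ${\cal Z}^T_f$ for its gradient, uses that this gradient has constant length to invoke Lemma~\ref{le:constgrad} (in place of your flow-of-$T$ construction, which amounts to the same thing), and then shows $h$ is linear in $s$ via a Hessian computation. One point worth noting: your explicit final rescaling $\tilde s=s\sqrt{1-\kappa^2}$, reconciling the $f$-induced polar metric with the $\phi$-induced one demanded by the statement, is a detail the paper leaves entirely implicit, so your bookkeeping here is actually more complete than the original.
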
 

 If $m=n$, then Theorem \ref{thm:CRmain} reduces to Corollary $2$ of \cite{to},
 by taking into account the following elementary observation
 that we state as a lemma.
 
 \begin{lemma}\label{le:element} If $\phi\colon M^n\to \Q_\epsilon^n\subset \R^{n+1}_{(1)}$, 
 with $n\geq 2$ and $\epsilon \in \{-1, 0, 1\}$, is a local isometry of a product manifold
$M^n=I\times N^{n-1}$ endowed with a polar metric, then
 there exists a  hypersurface $\phi_0:N^{n-1}\to\Q_\epsilon^n$ 
such that  
${\phi}_s:= \phi(s, \cdot) \colon N^{n-1}\to\Q_\epsilon^n$ 
is the family of parallel hypersurfaces 
to $\phi_0$ indexed on  $J$, that is, 
\[
\phi_s(x)=C_\e(s) \phi_0(x)+S_\e(s)N(x),
\]
where $N$ is a unit normal vector field to $\phi_0$ and $(C_\e(s),S_\e(s))$ 
is either $(\cos s,\sin s)$, $(1, s)$ or $(\cosh s, \sinh s)$, depending on 
whether $\epsilon=1$, $0$ or $-1$, respectively. 
\end{lemma}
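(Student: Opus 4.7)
The plan is to read off the geometric meaning of the hypotheses directly: the polar form of the metric identifies the $s$-curves as unit-speed geodesics orthogonal to the slices $\{s\}\times N^{n-1}$, and the local isometry $\phi$ then realises $\phi$ as the normal exponential map of the initial hypersurface. Combining this with the explicit geodesic formulas in the ambient space forms yields the claimed expression.

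First I would fix (after reparametrising $I$ if necessary) $0\in I$ and set
$\phi_0:=\phi(0,\cdot)\colon N^{n-1}\to \Q_\e^n$. Since $\phi$ is a local isometry between $n$-dimensional manifolds, $\phi_0$ is an immersion, hence a hypersurface. The polar metric (\ref{eq:polarmet2}) is orthogonal, and the integral curves of $\partial/\partial s$ are unit-speed geodesics of $M^n$ (this is the characterisation of polar metrics cited from \cite{to2}). Because $\phi$ is a local isometry, the curves $s\mapsto \phi(s,x)$ are unit-speed geodesics in $\Q_\e^n$ for each fixed $x\in N^{m-1}$, and their velocities at $s=0$,
\[
N(x):=\frac{\partial \phi}{\partial s}(0,x),
\]
are unit vectors that are orthogonal to $(\phi_0)_*T_xN^{n-1}$, so $N$ is a unit normal vector field along $\phi_0$.

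Next I would invoke the well-known explicit formula for the geodesic $\gamma$ in $\Q_\e^n\subset \R^{n+1}_{(1)}$ with initial point $p=\phi_0(x)$ and initial unit velocity $v=N(x)$: viewing $\Q_\e^n$ as the quadric $\langle z,z\rangle=\e$ (or as an affine hyperplane when $\e=0$) in $\R^{n+1}_{(1)}$, the geodesic satisfies $\gamma''=-\e\,\gamma$ in the ambient space (respectively $\gamma''=0$ for $\e=0$), so that
\[
\gamma(s)=C_\e(s)\,p+S_\e(s)\,v,
\]
with $(C_\e,S_\e)$ as in the statement. Applying this with $p=\phi_0(x)$ and $v=N(x)$, and using uniqueness of geodesics, gives precisely
\[
\phi_s(x)=\phi(s,x)=C_\e(s)\,\phi_0(x)+S_\e(s)\,N(x),
\]
which is the desired formula and makes evident that the family $\{\phi_s\}_{s\in J}$ consists of parallel hypersurfaces to $\phi_0$.

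I do not see a real obstacle in this argument; the only point that needs a line of care is the unit normality of $N(x)$, which uses both the orthogonality of the polar metric (so that $\partial/\partial s$ is perpendicular to $TN^{n-1}$ in $M^n$) and the fact that $\phi$ is an isometry (so that this orthogonality transfers to $\Q_\e^n$). Everything else is just the standard geodesic formula in space forms applied pointwise in $x$.
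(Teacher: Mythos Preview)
Your argument is correct and is exactly the natural proof one would expect. The paper does not actually give a proof of this lemma: it is introduced as ``the following elementary observation that we state as a lemma'' and left unproved. So there is no paper proof to compare against; your write-up supplies what the authors omitted. The one point worth tightening is purely cosmetic: you wrote $N^{m-1}$ once where $N^{n-1}$ is meant, and the translation so that $0\in I$ should perhaps be phrased as choosing $s_0\in I$ and setting $\phi_0=\phi(s_0,\cdot)$, since the explicit formula in the statement implicitly takes $s_0=0$.
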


Before giving the proof of Theorem \ref{thm:CRmain} we show the following fact,
which sheds light on how product manifolds $M^m=J\times N^{m-1}$ with a polar 
metric arise.

\begin{lemma}\label{le:constgrad}
Let $M^m$ be a Riemannian manifold and let $\rho\in C^{\infty}(M)$ be such that
$\textup{\grad} \rho$ has unit length. Then $M^m$ is locally isometric to 
a product manifold $J\times N^{m-1}$ endowed with a polar metric (\ref{eq:polarmet2}),
with the curves $s\mapsto (s, x_0)$, $x_0\in N^{m-1}$, corresponding to the 
integral curves of $\textup{\grad} \rho$. The assertion holds globally, with $J=\R$,
if the integral curves of $\textup{\grad} \rho$ are defined on $\R$.
\end{lemma}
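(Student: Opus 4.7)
The plan is to use the flow of $\grad \rho$ to build the desired local product decomposition, with $\rho$ itself (up to translation) playing the role of the coordinate $s$.

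First, I would verify that the integral curves of $\grad\rho$ are (unit-speed) geodesics. Since $\|\grad\rho\|\equiv 1$, for any $Y\in \mathfrak{X}(M)$ we have $0=\tfrac{1}{2}Y\|\grad\rho\|^2=\<\nabla_Y\grad\rho,\grad\rho\>$. Using that the endomorphism $Y\mapsto\nabla_Y\grad\rho$ is self-adjoint (it is the Hessian of $\rho$), this gives $\<\nabla_{\grad\rho}\grad\rho,Y\>=\<\nabla_Y\grad\rho,\grad\rho\>=0$ for all $Y$, so $\nabla_{\grad\rho}\grad\rho=0$.

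Next, fix a regular level set $N^{m-1}=\rho^{-1}(s_0)$ (nonempty locally around any point of $M^m$; it is a hypersurface because $\grad\rho$ never vanishes), and let $\psi_s$ denote the flow of $\grad\rho$, defined on a maximal open interval $J\subset\R$ containing $0$ (on all of $\R$ if the integral curves are complete). Define
$$\Phi\colon J\times N^{m-1}\to M^m,\qquad \Phi(s,x)=\psi_s(x).$$
At points of $\{0\}\times N^{m-1}$, $d\Phi$ sends $\partial_s$ to $\grad\rho$, which is transverse to $T_xN^{m-1}$, while restricting to the identity on $T_xN^{m-1}$; hence $\Phi$ is a local diffeomorphism, and is a diffeomorphism of $J\times N^{m-1}$ onto its image in the complete case (after shrinking $N^{m-1}$ suitably to ensure injectivity if necessary).

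Now I would check that $\Phi^*\<\,,\,\>$ has the form \eqref{eq:polarmet2}. For the $s$-direction, $\Phi_*\partial_s=\grad\rho$, so $\<\Phi_*\partial_s,\Phi_*\partial_s\>\equiv 1$, giving the $\pi_1^*ds^2$ term. For a vector $v\in T_xN^{m-1}$, let $V$ be its extension along the orbit of $x$ by the flow, so $[\grad\rho,V]=0$, and consider the function $h(s)=\<\grad\rho,V\>(\psi_s(x))$. Then
$$h'(s)=\<\nabla_{\grad\rho}\grad\rho,V\>+\<\grad\rho,\nabla_{\grad\rho}V\>=\<\grad\rho,\nabla_V\grad\rho\>=\tfrac{1}{2}V\|\grad\rho\|^2=0,$$
where I used $\nabla_{\grad\rho}\grad\rho=0$, the vanishing Lie bracket, and again that $\|\grad\rho\|\equiv 1$. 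Since $h(0)=\<\grad\rho,v\>=0$ because $v\in T_xN^{m-1}=(\grad\rho)^\perp$, we conclude $h\equiv 0$, which is exactly the orthogonality of $\Phi_*\partial_s$ and $\Phi_*v$. Setting $g_s(v,w):=\<(\psi_s)_*v,(\psi_s)_*w\>$ for $v,w\in T_xN^{m-1}$ then yields $\Phi^*\<\,,\,\>=\pi_1^*ds^2+\pi_2^*g_s$, and by construction the curves $s\mapsto(s,x_0)$ correspond under $\Phi$ to the integral curves of $\grad\rho$.

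The main technical point is the geodesic property of $\grad\rho$ (the eikonal-type computation) together with the orthogonality preservation along the flow; everything else is the standard flow-box construction. The global statement follows by the same argument, with $J=\R$, whenever the integral curves of $\grad\rho$ are defined on $\R$.
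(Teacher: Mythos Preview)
Your proposal is correct and follows the same approach as the paper: both show $\nabla_{\grad\rho}\grad\rho=0$ via the identical self-adjointness computation and use the level sets of $\rho$ as the leaves of the orthogonal distribution. The only difference is that the paper then cites Theorem~2.7 of \cite{to2} for the resulting polar-metric decomposition, whereas you carry out that flow-box construction and the orthogonality check explicitly.
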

\proof Denote ${\cal T}=\textup{\grad} \rho$. Then
$\<\nabla_{\cal T}{\cal T},X\>=\<\nabla_X{\cal T}, {\cal T}\>=(1/2)X\<{\cal T},{\cal T}\>=0$
for all $X\in \mathfrak{X}(M)$, thus the integral curves of ${\cal T}$ are geodesics 
of $M^m$. On the other hand, since ${\cal T}$ is a gradient vetor field, then the distribution
$\{{\cal T}\}^\perp$ is integrable, its leaves being the level sets of $\rho$.
By Theorem~$2.7$ of \cite{to2}, $M^m$ is locally isometric to a product 
$J\times N^{m-1}$ endowed with a polar metric (\ref{eq:polarmet2}). 
Moreover, also by that theorem, $M^m$ is globally isometric to a product 
$\R\times N^{m-1}$ endowed with such a polar metric if the integral curves of 
${\cal T}$ are complete geodesics.\vspace{2ex}\qed

We point out that \emph{any} Riemannian manifold $M^m$ admits \emph{locally} a function
$\rho$ whose gradient has unit length. For instance, if $x\in M^m$  and $U\subset T_xM$
is  an open neighborhood of $0\in T_xM$ restricted to which the exponential map $\exp_x$
is a diffeomorphism onto $V\subset M^m$, then the map $\rho\colon V\setminus \{x\}\to \R$
given by $\rho(y)=\|\exp^{-1}_x y\|$ has this property. The integral curves of $\grad \rho$
are radial geodesics issuing from $x$ and the leaves of the orthogonal distribution
are the geodesic hyperspheres centered at $x$.\vspace{2ex}

Therefore, starting with \emph{any} isometric immersion $f\colon M^m\to \Q_\epsilon^n$, $m\geq 2$, 
$\epsilon \in \{-1, 0, 1\}$, one can produce
an immersion  $\phi\colon J\times N^{m-1}\to \Q_\epsilon^n$  
of a product manifold $J\times N^{m-1}$, 
with $J\subset \R$ an open interval, whose induced metric is the polar 
metric (\ref{eq:polarmet2}). It suffices to compose a map $\psi\colon J\times N^{m-1}\to U$ 
onto an open subset $U$ of $M^m$ whose induced metric is such a polar metric, as the isometry 
in the preceding paragraph, with the restriction $f|_U$ of $f$ to $U$.  \vspace{2ex}\\
\noindent \emph{Proof of Theorem \ref{thm:CRmain}:} 
Since ${\cal Z}=\frac{\partial}{\partial t}$ has  constant length,  
proving that  $f$ has the constant ratio property with respect to ${\cal Z}$ is equivalent 
to showing that either ${\cal Z}^T_{f}$ or ${\cal Z}^\perp_{f}$ has constant length. 
From \eqref{eq:fCRddt} we have
\[
0=X \<f,{\cal Z}\> = \<f_*X, {\cal Z}\> = \<X, {\cal Z}^T_{f}\>_f
\]
for all $X\in \mathfrak{X}(N)$, where $\<\,\cdot\,, \,\cdot \,\>_f$ denotes the metric 
$d\sigma^2=(1+A^2)\pi_1^*ds^2+\pi_2^*g_s$ induced by $f$.
It follows that ${\cal Z}^T_{f}=\lambda\frac{\partial}{\partial s}$ 
for some $\lambda\in C^{\infty}(M)$. On the other hand,
$$
A=\frac{\partial}{\partial s}\<f,{\cal Z}\> = \<f_*\frac{\partial}{\partial s}, {\cal Z}\> 
 = \< \frac{\partial}{\partial s}, {\cal Z}^T_{f} \>_f 
 = \lambda \< \frac{\partial}{\partial s}, \frac{\partial}{\partial s} \>_f = (1+A^2)\lambda.
$$
Therefore ${\cal Z}^T_{f}$ has constant length $|A|(1+A^2)^{-1/2}$.

We now prove the converse. If either ${\cal Z}^\perp_{f}$ or ${\cal Z}^T_{f}$ vanishes identically, 
then $f(M)$ is an open part of a vertical cylinder or it is contained in a horizontal slice,
respectively. 
Suppose from now on that neither of these possibilities occur. Denoting $F=\<f,{\cal Z}\>$ we have 
 $$X(F) = X\<f, {\cal Z}\> = \< f_*X,{\cal Z} \> = \<X,{\cal Z}^T_{f}\>_f$$
for all $X \in \mathfrak{X}(M)$. This means that ${\cal Z}^T_{f}$ is the gradient of $F$. 
Since $f$ has the constant ratio property with respect to ${\cal Z}=\frac{\partial}{\partial t}$,
which has constant length, also ${\cal Z}^T_{f}$ has constant length. It follows from Lemma \ref{le:constgrad}
that $M^m$ is locally isometric to a product $M^m=I\times N^{m-1}$ endowed with a polar 
metric (\ref{eq:polarmet2}), and actually globally  if the integral curves of 
${\cal Z}^T_{f}$ are complete geodesics.
Furthermore, from
$$
\hess F({\cal Z}^T_{f},{\cal Z}^T_{f}) = \< \nabla_{{\cal Z}^T_{f}}{\cal Z}^T_{f}, {\cal Z}^T_{f} \>_f = 0,
$$
it follows that $F$ depends linearly on $s$. \qed
   
\subsection{The warped product case}

We now consider isometric immersions $f\colon M^m\to I\times_{\rho}\Q^n_\e$ into a 
warped product $I\times_{\rho}\Q^n_\e$, where $I\subset \R$ is an open interval,
 endowed with the warped product metric
\[
\<\; ,\;\> = \pi_1^\ast dt^2+(\rho\circ \pi_1)^2\pi_2^\ast\<\; ,\;\>_{\Q^n_\e},
\]
that have the constant ratio property with respect to the unit vector field 
$\mathcal Z = \frac{\partial}{\partial t}$ tangent to the factor $I$. 
Here the warping function $\rho\colon I\to\R$ is any smooth positive function and 
$\pi_1\colon I\times\Q^n_\e\to I$, $\pi_2\colon I\times\Q^n_\e\to \Q^n_\e$ denote
the canonical projections. 

The classification of such isometric immersions  will follow from part $(iii)$ of
Lemma~\ref{le:basic} and Theorem \ref{thm:CRmain} by remarking that $I\times_{\rho}\Q^n_\e$
is conformally diffeomorphic to an open subset of $\Q^n_\e\times\R$. This
 observation  goes back to Gerardus Mercator (1512--1594) (before calculus was invented!)
 in his construction of a conformal map of
the sphere onto the plane from the standard parametrization of the sphere by the latitude and longitude.
Namely, let $G\colon I\to \R$ be any primitive of the function 
$\displaystyle{\frac{1}{\rho}\colon I\to \R}$, that is, 
$\displaystyle{
G'(t)= \frac{1}{\rho(t)}}
$
for all $t\in I$. Then $G$ is an increasing diffeomorphism onto an open interval $J\subset \R$, 
and the map $\phi:I\times_\rho\Q_\e^n\to\Q_\e^n\times J\subset \Q_\e^n\times \R$,
defined as
\[
\phi(s,x) = (x,G(s)),
\]
is a conformal diffeomorphism between $I\times_\rho\Q_\e^n$ and
$\Q_\e^n\times J$ with conformal factor $\lambda=\rho\circ \pi_1$, that is, 
its induced metric is $\lambda^2\<\; ,\;\>$.
Its inverse
\be\label{eq:psi}
\psi = \phi^{-1}:\Q_\e^n\times J\to I\times_\rho\Q_\e^n
\ee
is given by 
\be\label{eq:psib}
\psi(x,t)=(F(t),x)
\ee
where $F=G^{-1}$, so that $F'(t)=\rho(F(t))$ for all $t\in J$. 
Moreover, 
\[
\Psi_\ast(x_0,t_0) \frac{\partial}{\partial t}(x_0,t_0) = 
F'(t_0) \frac{\partial}{\partial  t}(F(t_0),x_0) = 
\rho(F(t_0))\frac{\partial}{\partial  t}(F(t_0),x_0)
\]
for all $(x_0,t_0)\in\Q^n_{\epsilon}\times J$, where the former 
$\frac{\partial}{\partial t}$ denotes a unit tangent vector field
on $J$, while the latter two denote a unit tangent vector field
on $I$. Therefore, the following result is a consequence of part 
$(iii)$ of Lemma \ref{le:basic} together with Theorem \ref{thm:CRmain}.

\begin{theorem}\label{thm:CRwarped} 
Let $I \times_{\rho}\Q^n_\e$ be a warped product of an open interval 
$I\subset\R$ and $\Q^n_\e$, $\e\in\{-1, 0, 1\}$.
Let $F\colon \tilde{J}\to \tilde{I}$ be a diffeomorphism of an open interval 
$\tilde{J}\subset \R$ onto an open interval $\tilde{I}\subset I$ satisfying 
\be\label{eq:ode}
F'(t)=\rho(F(t))
\ee
for all $t\in \tilde{J}$, let $A\neq 0$ and let 
 $J= h^{-1}(\tilde J)$, where $h\colon \R \to \R$ is the linear map
given by $h(t)= A t$ for all $t\in \R$. 
Let $\phi\colon M^m\to \Q_\epsilon^n$ be an isometric
immersion of the product manifold $M^m={J}\times N^{m-1}$, $m\geq 2$, 
endowed with a polar metric (\ref{eq:polarmet2}). Then the map 
$f\colon\, M^m\to I \times_{\rho} \Q_\epsilon^n$ given by
\begin{equation} \label{fwarped}
f(s,x)=(F(As), \phi(s,x))
\end{equation}
is an immersion with 
the constant ratio property with respect to the unit vector field 
${\cal Z}=\frac{\partial}{\partial t}$ tangent to the factor $I$.

Conversely, if $f\colon\, M^m \to I \times_{\rho} \Q^n_{\epsilon}$, $m\geq 2$, 
$\epsilon \in \{-1, 0, 1\}$, is an  isometric immersion that has  the constant ratio 
property with respect to ${\cal Z}=\frac{\partial}{\partial t}$, then either $f(M)$ is an 
open part of a vertical cylinder $I \times_{\rho} N^{m-1}$ or of a submanifold of a 
horizontal slice, or $f$ is locally given as above (globally, if the integral curves
 of ${\cal Z}_{ f}^T$ are defined on $\R$).
\end{theorem}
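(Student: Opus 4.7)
The strategy is to transport Theorem~\ref{thm:CRmain} from $\Q^n_\e\times\R$ to the warped product $I\times_\rho\Q^n_\e$ through the conformal diffeomorphism $\psi\colon\Q^n_\e\times J\to I\times_\rho\Q^n_\e$ of (\ref{eq:psi})--(\ref{eq:psib}), invoking the conformal invariance in Lemma~\ref{le:basic}(iii). The computation displayed just before the statement shows that $\psi_{\ast}(\partial/\partial t)=\rho\cdot\partial/\partial t$, so that $\partial/\partial t$ on $\Q^n_\e\times J$ is $\psi$-related to $\rho\cdot\partial/\partial t$ on $I\times_\rho\Q^n_\e$. Since $\rho>0$, the constant ratio property with respect to $\rho\cdot\partial/\partial t$ coincides with that with respect to $\partial/\partial t$, by the remark made right after Definition~\ref{def:cr}. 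This bridge reduces the whole theorem to a translation of Theorem~\ref{thm:CRmain} across $\psi$.

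For the direct implication, I set $\hat f=\psi^{-1}\circ f$, with $\psi^{-1}(s',y)=(y,G(s'))$ and $G=F^{-1}$, so that (\ref{fwarped}) gives $\hat f(s,x)=(\phi(s,x),As)$, which has precisely the form (\ref{eq:fCRddt}). The direct part of Theorem~\ref{thm:CRmain} then yields that $\hat f$ has the constant ratio property with respect to $\partial/\partial t$ on $\Q^n_\e\times J$, and Lemma~\ref{le:basic}(iii) transports this property to $f=\psi\circ\hat f$ with respect to $\rho\cdot\partial/\partial t$, hence with respect to $\partial/\partial t$ on $I\times_\rho\Q^n_\e$.

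For the converse, starting from $f\colon M^m\to I\times_\rho\Q^n_\e$ with the constant ratio property with respect to $\partial/\partial t$, I work locally so that $f(M^m)\subset\psi(\Q^n_\e\times J)$ for a suitable subinterval $J$ of $G(I)$. Lemma~\ref{le:basic}(iii) transfers the property to $\hat f=\psi^{-1}\circ f\colon M^m\to\Q^n_\e\times J$, and Theorem~\ref{thm:CRmain} applied to $\hat f$ yields three alternatives. A vertical cylinder in $\Q^n_\e\times J$ is pushed forward by $\psi$ to a vertical cylinder $\tilde I\times_\rho N^{m-1}$ in the warped product; a horizontal slice $\Q^n_\e\times\{t_0\}$ goes to the horizontal slice $\{F(t_0)\}\times\Q^n_\e$; and an immersion of the form $\hat f(s,x)=(\phi(s,x),As)$ gives $f(s,x)=\psi(\hat f(s,x))=(F(As),\phi(s,x))$, which is precisely (\ref{fwarped}), with $s$ varying in $J=h^{-1}(\tilde J)$ so that $As$ stays in the domain $\tilde J$ of $F$. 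The global part of the statement is inherited word for word from Theorem~\ref{thm:CRmain}. The only point requiring real care is the bookkeeping of the intervals $\tilde J$ and $\tilde I$ so that $F$, the map $\psi$, and the range of $As$ are mutually compatible; beyond this, no analytic difficulty arises, since all the geometric work has already been carried out in Lemma~\ref{le:basic} and Theorem~\ref{thm:CRmain}.
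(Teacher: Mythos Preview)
Your proposal is correct and follows exactly the paper's approach: the paper states that Theorem~\ref{thm:CRwarped} is a consequence of part~(iii) of Lemma~\ref{le:basic} together with Theorem~\ref{thm:CRmain}, via the conformal diffeomorphism $\psi$ of (\ref{eq:psi})--(\ref{eq:psib}) and the $\psi$-relatedness of $\partial/\partial t$ to $\rho\cdot\partial/\partial t$, and you have spelled out precisely these steps. One minor remark: since $\psi\colon\Q^n_\e\times J\to I\times_\rho\Q^n_\e$ is a global diffeomorphism (with $J=G(I)$), the localization ``so that $f(M^m)\subset\psi(\Q^n_\e\times J)$'' in your converse argument is unnecessary.
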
 

In the case of hypersurfaces one has a more explicit description by taking Lemma \ref{le:element}
into account.

\subsection{The space form case -- loxodromic immersions}
\label{loxodromic}

There are well-known representations of any real space form 
$\Q^{n+1}_{\epsilon}$, $\epsilon\in\{1,0-1\}$ (or an open dense subset of it),
 as a warped product of an open interval 
and another space form, namely,
\begin{align}
&\R^{n+1}\setminus \{0\}= (0,+\infty) \times_{\rho} \Sf^n, && \mbox{with} \ \rho(t)=t, \label{warpedmodQ0} \\
& \Sf^{n+1}\setminus \{S,N\}=(0,\pi) \times_{\rho} \Sf^n, && \mbox{with} \ \rho(t)=\sin t, \label{warpedmodQ1} \\
&\Hy^{n+1}\setminus\{P\}= (0,+\infty) \times_{\rho} \Sf^n, && \mbox{with} \ \rho(t)=\sinh t, \label{warpedmodQ-1a}\\
&\Hy^{n+1}= \R \times_{\rho} \Hy^n, && \mbox{with} \ \rho(t)=\cosh t,\label{warpedmodQ-1b}\\
&\Hy^{n+1}= \R \times_{\rho} \R^n, && \mbox{with} \ \rho(t)=e^t/\sqrt{2}. \label{warpedmodQ-1c}
\end{align}
The first one is just the standard  description of $\R^{n+1}\setminus \{0\}$ in spherical coordinates, given by the isometry
\begin{equation} \label{mapWPeuclid}
(0,\infty) \times_{id} \Sf^n \to \R^{n+1}\setminus \{0\}: (t,x) \mapsto tx.
\end{equation}
The second one corresponds to the isometry of the unit sphere $\Sf^{n+1}$ minus the south and north poles
$S=(0,\ldots,0,-1)$ and $N=(0,\ldots,0,1)$ given, with respect to the model \eqref{modelQ1} for $\Sf^n$ 
and the corresponding one for $\Sf^{n+1}$, by the map
\begin{equation} \label{mapWPsphere}
(0,\pi) \times_{\sin} \Sf^n \to \Sf^{n+1}\setminus \{S,N\}: (t,x) \mapsto 
(\sin t)x + (\cos t)N.
\end{equation}
In terms of the model \eqref{modelQ1} for $\Sf^n$ and the one corresponding to \eqref{modelQ-1} for 
$\Hy^{n+1}$, the representation (\ref{warpedmodQ-1a}) of $\Hy^{n+1}$ minus $P=(0,\ldots,0,1)$ is given by the  
isometry 
\begin{equation} \label{mapWPhypa}
(0,+\infty) \times_{\sinh} \Sf^n \to \Hy^{n+1}\setminus \{P\} : (t,x) \mapsto (\sinh t)x + (\cosh t)P.
\end{equation}
For the representation (\ref{warpedmodQ-1b}) of $\Hy^{n+1}$ take  the  
isometry 
\begin{equation} \label{mapWPhypb}
\R \times_{\cosh} \Hy^n \to \Hy^{n+1} : (t,x) \mapsto (\sinh t)e + (\cosh t)x
\end{equation}
with $\Hy^n\subset \R_1^{n+1}=\{e\}^\perp \subset \R_1^{n+2}$, whereas for (\ref{warpedmodQ-1c}) the  
isometry is given by
\begin{equation} \label{mapWPhypc}
\R \times_{(1/\sqrt{2})\exp} \R^n \to \Hy^{n+1} : (t,x) \mapsto (1/\sqrt{2})e^t(v+x-(1/2)\|x\|^2w) - (1/\sqrt{2})e^{-t}w
\end{equation}
where $\{v, w\}$ is a pseudo-orthonormal basis of $(\R^n)^\perp\subset \R_1^{n+2}$, that is, 
$\<v,v\>=0=\<w,w\>$ and  $\<v, w\>=1$.\vspace{1ex}

Classicaly, a unit speed curve $\gamma\colon I\to \Sf^2$ is called a \emph{loxodrome} 
if its tangent vector makes a constant angle with the meridians of $\Sf^2$. 
One can similarly define a loxodrome $\gamma\colon I\to \Sf^{n+1}$
in $\Sf^{n+1}$ for any $n\geq 2$. 
Using the model \eqref{warpedmodQ1} to describe $\Sf^{n+1}$, 
loxodromes in $\Sf^{n+1}$ are precisely the images under the isometry in (\ref{mapWPsphere}) 
of the curves into $(0,\pi) \times_{\sin} \Sf^n$ that have the constant ratio property with 
respect to the unit tangent vector $\mathcal Z = \frac{\partial}{\partial t}$ to $(0,\pi)$. 

  Let us denote by $\phi$ either of the isometries given by $(\ref{mapWPeuclid})$ to 
$(\ref{mapWPhypc})$, and by $\frac{\partial}{\partial t}$ a unit vector field tangent to the 
one-dimensional factor of the corresponding warped-product space. Then the vector field 
$\phi_*\frac{\partial}{\partial t}$ in $\Q_\epsilon^{n+1}$ gives rise to a ``radial" vector field 
${\cal R}$ in $\Q_\epsilon^{n+1}$ (of three different types for $\epsilon=-1$). 
We will also refer to an isometric immersion into any of the models 
\eqref{warpedmodQ0}--\eqref{warpedmodQ-1c} having the constant ratio 
property with respect to the radial vector field ${\cal R}$ as a 
\emph{loxodromic isometric immersion}. To distinguish the three possible cases in $\Hy^{n+1}$, 
we will call a loxodromic isometric immersion $f\colon M^m\to \Hy^{n+1}$ \emph{elliptic}, 
\emph{hyperbolic} or \emph{parabolic}, depending on whether the radial vector field ${\cal R}$ 
is determined by an isometry $\phi$ as in \eqref{warpedmodQ-1a}, \eqref{warpedmodQ-1b} or 
\eqref{warpedmodQ-1c}, respectively.

For $\epsilon=0$, the vector field ${\cal R}$ is the standard radial vector field,  and a 
better approach to studying the constant ratio property with respect to that vector field will 
be given in Section \ref{sec:CRradial}. Therefore, we will omit this case here.

A description of all loxodromic isometric immersions $f\colon M^m\to \Q_\epsilon^{n+1}$ 
for $\epsilon \in\{-1, 1\}$ follows immediately from Theorem \ref{thm:CRwarped}.
Namely, any loxodromic isometric immersion $f\colon M^m\to \Q_\epsilon^{n+1}$
is given by $f=\phi\circ \hat f$, where $\phi$ is one of the isometries given by 
(\ref{mapWPsphere}) to (\ref{mapWPhypc}) and $\hat f\colon M^n\to I\times_{\rho}\Q_\epsilon^{n}$ 
is given by (\ref{fwarped}), with $I=(0,\pi)$, $(0, \infty)$ or $\R$, and $F\colon \tilde J \to \tilde I$ a 
diffeomorphism of an open interval $\tilde J\subset \R$ onto an open interval $\tilde I\subset I$ 
satisfying (\ref{eq:ode}) for the restriction to $\tilde I$ of the corresponding warping function 
$\rho\in C^{\infty}(I)$ given as in either of equations $(\ref{warpedmodQ1})$ to $(\ref{warpedmodQ-1c})$.

For instance,  any solution of the ODE $F'(t)=\sin(F(t))$ on a maximal interval is given 
by $F:\R\to\R:t\mapsto 2\arctan(e^{t-c})$ for an arbitrary real constant $c$, which we can assume 
to be zero after a translation of the parameter $t$.
Therefore, any loxodromic isometric immersion into $\Sf^{n+1}$ is given by a map 
$f\colon M^m:=J\times N^{m-1}\to \Sf^{n+1}$ defined by
$$
f(s,x)=\sin\left(2\arctan(e^{(\sin\theta)s})\right)\phi(s,x)
+\cos\left(2\arctan(e^{(\sin\theta)s})\right)N$$
where $\phi\colon M^m\to \Sf^n$, $m\geq 2$, is an isometric  
immersion of the product manifold $M^m={J}\times N^{m-1}$ endowed with the polar metric 
(\ref{eq:polarmet2}).  

  Note that, although Theorem \ref{thm:CRwarped} is formulated for $m \geq 2$, 
we can still interpret it for $m=1$, i.e., to describe constant ratio curves. 
In this case, the manifold $N^{m-1}$ collapses to a point and $M^1=J$ is an open interval, 
so that the preceding equation reduces to
  $$\begin{array}{l}
f(s,x)=\sin\left(2\arctan(e^{(\sin\theta)s})\right)(\cos((\cos\theta)s), \sin((\cos\theta)s), 0)
\vspace{1ex}\\
\hspace*{10ex}+\cos\left(2\arctan(e^{(\sin\theta)s})\right)(0,0,1),\end{array} 
$$
which, under the reparametrizaton $u=2\arctan(e^{(\sin\theta)s})$, corresponds to the usual 
description of a loxodrome on $\Sf^2\subset \R^3$, namely,
$$ \alpha(u) = \left( \cos\left(\cot\theta\ln\left(\tan\frac{u}{2}\right)\right) \sin u, 
\sin\left(\cot\theta\ln\left(\tan\frac{u}{2}\right)\right) \sin u, \cos u \right). $$

   To obtain the parabolic loxodromic isometric immersions into $\Hy^{n+1}$, 
first notice that any solution of the ODE $F'(t)=\sqrt{2}e^{F(t)}$ on a maximal interval 
is given by 
$$
F\colon (-\infty, c/\sqrt{2}) \to\R:t\mapsto \log \frac{1}{c-\sqrt{2} t}
$$
for an arbitrary real constant $c$.
Therefore, any parabolic loxodromic isometric immersion into $\Hy^{n+1}$ 
is given by a map $f\colon M^m:=J\times N^{m-1}\to \Hy^{n+1}\subset  \R_1^{n+2}$ defined by
$$
\sqrt{2}f(s,x)=\frac{1}{c-\sqrt{2}(\sin \theta)s}\left(v+\phi(s,x)
+\frac{1}{2}\|\phi(s,x)\|^2w\right)-(c-\sqrt{2}(\sin \theta)s)w
$$
where $\{v, w\}$ is a pseudo-orthonormal basis of $(\R^n)^\perp\subset \R_1^{n+2}$, with
$\<v,v\>=0=\<w,w\>$ and  $\<v, w\>=1$, and $\phi\colon M^m\to \R^n$, $m\geq 2$, is an
isometric immersion of the product manifold $M^m={J}\times N^{m-1}$ endowed with the  
polar metric (\ref{eq:polarmet2}).  

Elliptic and hyperbolic loxodromic isometric immersions into $\Hy^{n+1}$ can be explicitly 
computed in a similar way.
     
\section{Principal direction property  with respect to $\frac{\partial}{\partial t}$} 
\label{pdpconst}

Isometric immersions $f:M^m\to\Q_\e^n\times\R$, with $m\geq 2$
and $\epsilon \in \{-1, 0, 1\}$, that satisfy the principal direction property
with respect to ${\cal Z}=\frac{\partial}{\partial t}$ and which are not 
vertical cylinders, were completely described in \cite{to} for the case
of hypersurfaces and in \cite{bt} for arbitrary codimension. The class of 
such isometric immersions was called  \emph{class ${\cal A}$} in \cite{bt}.
We will consider the case $\epsilon\in \{-1, 1\}$, the case $\epsilon=0$ 
being similar.
  
Let $\phi \colon\, N^{m-1}\to \Q_\e^n$  be an isometric immersion and assume 
that there exists an orthonormal set of normal vector fields 
$\xi_1, \ldots, \xi_k$ along $\phi$ that are parallel in the normal bundle. 
This assumption is satisfied, for instance, if $\phi$ has flat normal bundle. 
After including $\Q_\e^n$ into $\Q_\e^n \times \R$ in the standard way and 
using the models \eqref{modelQ1} and \eqref{modelQ-1}, we can look at $\phi$ 
as an immersion into $\R^{n+2}_{(1)}$ and at the vector fields 
$\xi_1, \ldots, \xi_k$ as vector fields on $\R^{n+2}_{(1)}$ along this immersion. 
Now add the following two vector fields along $\phi$ to this list:
$$ \xi_{k+1}(x) = \phi(x), \qquad \xi_{k+2}(x)
 = \frac{\partial}{\partial t}(\phi(x)) =(0,\ldots,0,1) $$
for all $x \in N^{m-1}$. Observe that $\xi_1,\ldots,\xi_{k+1}$ all have last 
component zero. The vector subbundle $E$ of  the normal bundle of 
$\phi:N^{m-1} \to \mathbb R^{n+2}_{(1)}$, 
whose fiber $E(x)$ at $x\in N^{m-1}$ is spanned by $\xi_1(x), \ldots, \xi_{k+2}(x)$, 
is parallel and flat and we may define a vector bundle isometry
\[
\Xi: N^{m-1}\times \R^{k+2}_{(1)} \to E
\]
by
\[
\Xi(x,(y_1,\ldots,y_{k+2})) = \sum_{i=1}^{k+2} y_i \xi_i(x).
\]
Now, let $\gamma:I\to\Q^k_\e\times\R\subset\R^{k+2}_{(1)}$, 
$\gamma= (\gamma_1,\ldots,\gamma_{k+1},\gamma_{k+2})$, 
be a smooth regular curve such that 
$\gamma_1^2+ \ldots + \gamma_k^2+\epsilon\gamma_{k+1}^2=\epsilon$
and $\gamma_{k+2}$ has non-vanishing derivative, and define a map
$f\colon N^{m-1} \times I \to \Q_\e^n\times \R$ by
\begin{eqnarray}\label{eq:fPD}
\begin{aligned}
f(x,s) & =  \Xi(x,\gamma(s)) = \sum_{i=1}^{k+2} \gamma_i(s)\xi_i(x) \\
& =  \sum_{i=1}^k \gamma_i(s)\xi_i(x) + \gamma_{k+1}(s)\phi(x) + 
(0,\ldots,0,\gamma_{k+2}(s)).
\end{aligned}
\end{eqnarray}
Notice that, if $\gamma$ parametrizes a line $\{p_0\} \times \R$ in 
$\Q_\e^k\times \R\subset \Ee^{k+2}$, then the map $f$ parametrizes a 
vertical cylinder. Geometrically, the image of $f$ is generated by parallel 
transporting the curve $\gamma$ in a product submanifold 
$\Q_\epsilon^k\times \R$ of a fixed normal  space of $\phi$ in 
$\mathbb R^{n+2}_{(1)}$ with respect to its normal connection. 

The following result from \cite{bt}, 
which we reformulate a bit for our purposes, states that the above 
construction gives all submanifolds of class $\mathcal A$, that is,
all submanifolds in $\Q_\e^n\times\R$ that have the principal direction property
with respect to ${\cal Z}=\frac{\partial}{\partial t}$.

\begin{theorem}[\cite{bt}]\label{thm:PDmain} 
The restriction  of the map 
$f \colon N^{m-1} \times I \to \Q_\e^n\times \R$ given by \eqref{eq:fPD}
to the subset of its regular points is an immersion with the principal 
direction property with respect to ${\cal Z}=\frac{\partial}{\partial t}$. 

Conversely, if $f\colon M^m\to \Q_\e^n\times \R$, $m\geq 2$, is an isometric immersion
with the principal direction property with respect to ${\cal Z}=\frac{\partial}{\partial t}$,
then it is locally given in this way (globally, if the  integral curves of 
${\cal Z}_{f}^T$ are defined on $\R$). 
\end{theorem}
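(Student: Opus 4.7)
The plan is to treat the direct and converse parts separately. For the direct part, I would first verify that the image of $f$ lies in $\Q_\e^n\times\R$, combining the constraint $\gamma_1^2+\cdots+\gamma_k^2+\e\gamma_{k+1}^2=\e$ with the orthonormality of $\{\xi_1,\ldots,\xi_k\}$ and the relations $\langle\phi,\phi\rangle=\e$, $\langle\phi,\xi_i\rangle=0$ in $\R^{n+2}_{(1)}$. Differentiating $f$ one obtains
\[
f_*\!\left(\tfrac{\partial}{\partial s}\right)=\sum_{i=1}^{k+2}\gamma_i'(s)\xi_i(x),\qquad f_*(X)=\sum_{i=1}^{k+2}\gamma_i(s)(\xi_i)_*X
\]
for $X\in T_xN$. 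Since each $\xi_i$ with $i\leq k$ is parallel in the normal bundle of $\phi$ in $\Q_\e^n$, so that $\tilde\nabla_X\xi_i=-\phi_*A^\phi_{\xi_i}X$, and since $(\xi_{k+1})_*X=\phi_*X$ while $(\xi_{k+2})_*X=0$, a short calculation yields $\langle f_*(\partial/\partial s),f_*(X)\rangle=0$ and $\langle\partial/\partial t,f_*(X)\rangle=0$. It follows that ${\cal Z}^T_f$ is a nonzero scalar multiple of $\partial/\partial s$.

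To establish the principal direction property I would decompose the normal bundle of $f$ into two pieces: (a) vectors of the form $\sum a_i(x)\xi_i(x)$ with $\sum a_i(x)\gamma_i'(s)=0$, and (b) vectors in the normal bundle of $\phi$ in $\Q_\e^n$ that are orthogonal to the span of $\xi_1(x),\ldots,\xi_k(x)$. For a normal field $\eta$ of either type, the coefficients along $\xi_1,\ldots,\xi_{k+2}$ are independent of $s$, hence $\tilde\nabla_{f_*(\partial/\partial s)}\eta=0$ in the ambient space; projecting onto $f_*(TM)$ gives $A^f_\eta(\partial/\partial s)=0$. Combined with the colinearity of $\partial/\partial s$ and ${\cal Z}^T_f$, this delivers the principal direction property.

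For the converse, Lemma \ref{prop:paralellvf} applied to the parallel field ${\cal Z}=\partial/\partial t$ shows that ${\cal Z}^T_f$ is the gradient of the height function $F=\langle f,\partial/\partial t\rangle$. Outside the trivial cases (vertical cylinder or horizontal slice), ${\cal Z}^T_f$ is nowhere vanishing, and the principal direction property together with the Codazzi equation gives integrability of $\{{\cal Z}^T_f\}^\perp$. Since $F$ is constant on each leaf $N^{m-1}$, the image $f(N^{m-1})$ lies in a horizontal slice $\Q_\e^n\times\{t_0\}$, producing an isometric immersion $\phi\colon N^{m-1}\to\Q_\e^n$; the flow of ${\cal Z}^T_f/\|{\cal Z}^T_f\|^2$ then yields the decomposition $M^m=N^{m-1}\times I$.

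The main obstacle I anticipate is identifying the parallel flat normal subbundle $E$ of $\phi$ and proving that the curve $s\mapsto f(x,s)$ remains inside $E(x)\subset\R^{n+2}_{(1)}$ for every $x$. Formula \eqref{eq:Derivadaeta} together with the principal direction property shows that the iterated normal covariant derivatives of ${\cal Z}^\perp_f$ along $\partial/\partial s$ take values in a subbundle of the normal bundle of $\phi$ in $\Q_\e^n$; the expected step is to verify that this subbundle is finite-dimensional and $\nabla^\perp$-parallel along $N^{m-1}$, which produces the parallel frame $\xi_1,\ldots,\xi_k$. The curve $\gamma$ is then recovered as the tuple of components of $f(x,s)$ in the frame $\{\xi_1(x),\ldots,\xi_{k+2}(x)\}$, which by the preceding construction is independent of $x$; the constraint $\gamma_1^2+\cdots+\gamma_k^2+\e\gamma_{k+1}^2=\e$ follows from $f(x,s)\in\Q_\e^n\times\R$.
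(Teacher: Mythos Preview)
The paper does not prove this theorem: it is quoted from \cite{bt} and used without argument, so there is no proof here to compare against.

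That said, your direct argument has a genuine error. For a normal field of type~(a), $\eta=\Xi(x,\zeta)=\sum_i\zeta_i\,\xi_i(x)$ with $\langle\zeta,\gamma'(s)\rangle=0$, the orthogonality constraint depends on $s$; a normal section along an $s$-curve therefore cannot in general have $s$-independent coefficients, and your conclusion $\tilde\nabla_{f_*(\partial/\partial s)}\eta=0$, hence $A^f_\eta(\partial/\partial s)=0$, is false. The correct statement---recorded later in the paper as \eqref{eq:alphapt3}, quoting \cite[Proposition~3.2]{bt}---is
\[
A^{f}_{\Xi(x,\zeta)}\frac{\partial}{\partial s}
=\frac{\langle\gamma''(s),\zeta\rangle}{\langle\gamma'(s),\gamma'(s)\rangle}\,\frac{\partial}{\partial s},
\]
so $\partial/\partial s$ is an eigenvector with generally nonzero eigenvalue. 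That is exactly what the principal direction property requires, so the conclusion survives, but the computation must be redone. Your treatment of type~(b) fields and the identification ${\cal Z}^T_f\parallel\partial/\partial s$ are fine.

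For the converse, your outline is reasonable and you honestly flag the crux: showing that each $s$-curve $s\mapsto f(x,s)$ stays inside a fixed parallel flat subbundle $E$ of $N_\phi N$ in $\R^{n+2}_{(1)}$, and that the resulting coordinate curve $\gamma$ is independent of $x$. Your idea of generating $E$ from iterated $\nabla^\perp_{\partial/\partial s}$-derivatives of ${\cal Z}^\perp_f$ is the right one, but note that parallelism of $E$ \emph{along $N^{m-1}$} (not just along $s$-curves) is where the principal direction hypothesis enters, via \eqref{eq:Derivadaeta}; this step needs to be made explicit.
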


The next result classifies isometric immersions $f:M^m\to\Q_\e^n\times\R$
with both the principal direction and constant ratio properties with respect 
to ${\cal Z}=\frac{\partial}{\partial t}$.

\begin{corollary}\label{cor:CRandPD} 
Let $f\colon M^m=N^{m-1}\times I\to \Q_\epsilon^n\times \R$ be given by \eqref{eq:fPD} 
for a regular curve 
$\gamma:I\subset\R\to\Q_\e^k\times\R\subset \R_{(1)}^{k+2}$,
$\gamma(s) = (\gamma_1(s),\ldots, \gamma_{k+1}(s), \gamma_{k+2}(s))$, such that 
$\bar \gamma\colon I\to \Q_\e^k\subset \R_{(1)}^{k+1}$, 
$\bar \gamma(s)= (\gamma_1(s),\ldots, \gamma_{k+1}(s))$, is a unit-speed curve and 
$\gamma_{k+2}(s))=As$ for some $A\neq 0$.  
Then the restriction of $f$ to the subset of its regular points is an  
immersion with both the principal direction and constant ratio properties 
with respect to ${\cal Z}=\frac{\partial}{\partial t}$.

Conversely,  any isometric immersion $f\colon M^m\to \Q_\e^n\times\R$, 
$m\geq 2$, with both the principal direction and constant ratio properties
with respect to ${\cal Z}=\frac{\partial}{\partial t}$,
is locally given in this way (globally, if the  integral curves of 
${\cal Z}_{ f}^T$ are defined on $\R$).
\end{corollary}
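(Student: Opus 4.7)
My plan is to reduce the problem to Theorem \ref{thm:PDmain} plus a constraint on the generating curve $\gamma$. Theorem \ref{thm:PDmain} describes all immersions with the principal direction property with respect to ${\cal Z}=\frac{\partial}{\partial t}$ as being of the form \eqref{eq:fPD}, so I only need to characterize which curves $\gamma$ yield an $f$ that additionally satisfies the constant ratio property. Since $\|{\cal Z}\|$ is constant, the argument in the proof of Theorem \ref{thm:CRmain} shows that the constant ratio property is equivalent to $\|{\cal Z}^T_f\|$ being constant along $M^m$.

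The key step is to compute $\|{\cal Z}^T_f\|^2$ explicitly in terms of $\gamma$. First I would verify that $\partial f/\partial s$ is orthogonal to $f_*X$ for every $X\in T_xN^{m-1}$, using that $\xi_1,\ldots,\xi_k$ are parallel in the normal bundle of $\phi$ in $\Q_\e^n$, $\xi_{k+1}=\phi$ is $\phi$-radial, and $\xi_{k+2}=\frac{\partial}{\partial t}$ is perpendicular to $\R^{n+1}_{(1)}$. This orthogonality forces ${\cal Z}^T_f$ to be a scalar multiple of $\partial/\partial s$, and a short computation using $\langle{\cal Z},\partial f/\partial s\rangle=\gamma_{k+2}'(s)$ together with the induced-metric formula $\|\partial f/\partial s\|^2=\|\bar\gamma'(s)\|^2+(\gamma_{k+2}'(s))^2$ gives
\[
\|{\cal Z}^T_f\|^2=\frac{(\gamma_{k+2}'(s))^2}{\|\bar\gamma'(s)\|^2+(\gamma_{k+2}'(s))^2}.
\]
The constant ratio property is then equivalent to $(\gamma_{k+2}'(s))^2/\|\bar\gamma'(s)\|^2$ being constant along $I$, provided one is not in the vertical cylinder or horizontal slice cases, which I would handle separately as trivial.

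With this characterization in place, both directions are routine. The direct part is immediate: if $\bar\gamma$ is unit-speed and $\gamma_{k+2}(s)=As$ with $A\neq 0$, the ratio equals $A^2\neq 0$, so $f$ has the constant ratio property, while the principal direction property is given by Theorem \ref{thm:PDmain}. For the converse, I start from the curve $\gamma$ produced by Theorem \ref{thm:PDmain}, reparametrize $\gamma$ so that $\bar\gamma$ becomes unit-speed (possible since outside the trivial cases $\bar\gamma'$ is nowhere zero), and then deduce from $(\gamma_{k+2}'(s))^2$ being a nonzero constant $A^2$ that, after a sign change and a translation of $s$, $\gamma_{k+2}(s)=As$. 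I expect the orthogonality computation to be the main technical step; from there the remaining argument is essentially a short ODE manipulation.
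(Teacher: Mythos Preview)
Your argument is correct. Both you and the paper reduce to Theorem~\ref{thm:PDmain} and then characterize the constant ratio property as a condition on the generating curve $\gamma$, but you use a different equivalent criterion from Lemma~\ref{prop:paralellvf}. The paper invokes criterion~$(iii)$: using the shape operator formula \eqref{eq:alphapt3} for $f$ (quoted from \cite{bt}) together with ${\cal Z}^\perp_f=\Xi(x,e_{k+2}^\perp)$, it shows that $\langle A^f_{{\cal Z}^\perp_f}{\cal Z}^T_f,{\cal Z}^T_f\rangle=0$ is equivalent to $\langle\gamma''(s),e_{k+2}\rangle=0$, i.e., to $\gamma_{k+2}$ being linear. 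You instead use criterion~$(ii)$: you compute $\|{\cal Z}^T_f\|^2=(\gamma_{k+2}')^2/(\|\bar\gamma'\|^2+(\gamma_{k+2}')^2)$ directly and read off the constraint from its constancy. Your route is slightly more self-contained, since it avoids importing the normal-space and shape-operator description from \cite{bt}, and it makes the unit-speed normalization of $\bar\gamma$ in the converse more transparent (the paper's proof leaves this last reparametrization step implicit). The paper's route, on the other hand, packages the computation into the second-fundamental-form machinery already set up for Theorem~\ref{thm:PDmain}, which is economical if one has \eqref{eq:alphapt3} at hand.
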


\proof Let $f\colon M^m=N^{m-1}\times I\to \Q_\epsilon^n\times \R\subset \R_{(1)}^{n+2}$
be the map given by \eqref{eq:fPD} for a regular curve 
$\gamma\colon I\subset\R\to\Q_\e^k\times\R\subset \R_{(1)}^{k+2}$,
$\gamma(s) = (\gamma_1(s),\ldots, \gamma_{k+1}(s), \gamma_{k+2}(s))$.
By \cite[Proposition 3.2]{bt}, the normal space $N_fM(x,s)$ 
of $f$ in $\R_{(1)}^{n+2}$ at $(x,s)$ is given by
\[
N_{f}{M}(x,s) = \Xi(x,\mathrm{span}\{\gamma'(s)\}^\perp) 
\subset N_{\phi}N(x),
\]
and the shape operator of $f$ with respect to $\Xi(x,\zeta)$ satisfies
\be\label{eq:alphapt3}
A^{f}_{\Xi(x,\zeta)}(x,s)\frac{\d}{\d s}
=\frac{\<\gamma''(s),\zeta\>}{\<\gamma'(s), \gamma'(s)\>}\frac{\d}{\d s}
\ee
for all $\zeta\in \R^{k+2}_{(1)}$ with $\<\zeta,\gamma'(s)\>=0$. 
On the other hand, we have
$$ {\cal Z}^\perp_{f} = \Xi(x,e_{k+2}^\perp),$$
where $e_{k+2}^\perp=e_{k+2}^\perp
-\<\gamma'(s), \gamma'(s)\>^{-1}\<e_{k+2}^\perp, \gamma'(s)\>\gamma'(s)$ 
is the normal component of $e_{k+2}$ along $\gamma$. 
It follows that $\<A_{{\cal Z}^\perp_{f}}{\cal Z}^T_{f}, {\cal Z}^T_{f}\>=0$ 
if and only if $\<\gamma''(s), e_{k+2}\>=0$ for all $s\in I$, 
that is, if and only if $\gamma_{k+2}$ is linear. \vspace{2ex}\qed

By Lemma \ref{prop:paralellvf}, special cases of isometric 
immersions $f: M^m\to \Q_\epsilon^n\times \R$ with both the principal 
direction and constant ratio properties
with respect to ${\cal Z}=\frac{\partial}{\partial t}$ are those for which 
${\cal Z}_f^\perp$ is parallel in the normal connection. These  isometric 
immersions were shown in \cite{bt} to be precisely the ones given by
\eqref{eq:fPD} with $\gamma\colon I\ \to \Q_\epsilon^k\times \R$  a 
geodesic of $\Q_\epsilon^k\times \R$.\vspace{1ex}

In the case of hypersurfaces, Theorem  \ref{thm:PDmain} reduces to 
Theorem $1$ in  \cite{to} by taking Lemma \ref{le:element}
into account. 

\subsection{The warped product case}

  The invariance of the principal direction property with respect to
conformal changes of the ambient metric (see part $(iv)$ of Lemma \ref{le:basic}) 
can be used to determine the isometric immersions 
$f\colon M^m\to I\times_{\rho}\Q^n_\e$ into a warped product $I\times_{\rho}\Q^n_\e$
that have the principal direction property with respect to the unit vector field 
$\mathcal Z = \frac{\partial}{\partial t}$ tangent to the factor $I$.

   Namely, let $\psi \colon \Q_\e^n\times J\to I\times_\rho\Q_\e^n$ be the conformal
diffeomorphism given by (\ref{eq:psib}). Then, by part $(iv)$ of 
Lemma \ref{le:basic} and Theorem \ref{thm:PDmain},  any such isometric immersion is 
given by $f=\psi\circ \hat f$, where $\hat f\colon M^m \to \Q^n_\e\times \R$ is the 
restriction of the map $\hat f\colon N^{m-1} \times I \to \Q_\e^n\times \R$ given by 
(\ref{eq:fPD}) to the subset $M^m\subset N^{m-1} \times I$ of its regular points, 
endowed with the induced metric, with the smooth regular curve 
$\gamma:I\to\Q^k_\e\times\R\subset\R^{k+2}_{(1)}$, 
$\gamma= (\gamma_1,\ldots,\gamma_{k+1},\gamma_{k+2})$, satisfying  
$\gamma(I)\subset \Q_\e^n\times J$, that is, $\gamma_{k+2}(I)\subset J$. Moreover,
choosing $\hat f$ as in Corollary \ref{cor:CRandPD}, the  isometric
immersions $f=\psi\circ \hat f$ give all isometric immersions into $I\times_\rho\Q_\e^n$
that have both the constant ratio and principal direction properties with respect to 
$\mathcal Z = \frac{\partial}{\partial t}$.

 In a similar way one can describe all isometric immersions $\tilde f\colon M^m\to \Q_\e^n$
that have the principal direction property (or both the constant ratio and principal direction 
properties) with respect to a radial vector field ${\cal R}$
as in Section \ref{loxodromic}. It suffices to compose the isometric immersions produced
as described in the previous paragraph with one of the isometries $\phi$ given by 
(\ref{mapWPsphere}) to (\ref{mapWPhypc}).

\section{Constant ratio property for radial vector fields} \label{sec:CRradial}

In this section and the next, we study Euclidean submanifolds with the constant 
ratio  or the principal direction property with respect to a radial 
vector field on $\R^n\setminus\{0\}$. In view of the remark after 
Definition \ref{def:cr}, without loss of generality we may work with the position 
vector field given by
$ \mathcal R(y) = y $
for all $y \in \R^n\setminus\{0\}$.

Our results will follow by combining the assertions in parts $(iii)$ and $(iv)$ of 
Lemma~\ref{le:basic} with the results of Sections \ref{crpconst} and \ref{pdpconst},
making use of the fact that the map 
\begin{equation} \label{eq:psiradial}
\Psi: \Sf^{n-1}\times\R \to \R^n \setminus\{0\}\; :\; (x,t) \mapsto e^t x 
\end{equation}
is a conformal diffeomorphism, with inverse 
\be\label{eq:invpsi}
\Phi=\Psi^{-1}\colon \R^n\setminus\{0\} \to \Sf^{n-1}\times\R\; :\;
 y\mapsto \left(\frac{y}{\|y\|},\log \|y\|\right).
 \ee 
Indeed, it is easily checked that the conformal factors of $\Psi$ and $\Phi$ are,
respectively,  $\varphi(x,t)=e^t$ and $\phi(y)=\|y\|^{-1}$.
Moreover, the unit vector field $\frac{\partial}{\partial t} \in \mathfrak X(\Sf^{n-1}\times\R)$ 
tangent to the factor $\R$ and the radial vector field ${\cal R} \in \mathfrak X(\R^n\setminus\{0\})$
are $\Psi$-related, that is, for all $(x,t)\in \Sf^{n-1}\times\R$ we have
\be\label{eq:psirelat}
 \Psi_*(x,t)\frac{\d}{\d t}(x,t) = {\cal R}(\Psi(x,t)).
 \ee
 
 A description of the submanifolds of $\mathbb R^n \setminus \{0\}$ having the constant ratio 
property with respect to $\mathcal R$ was given in \cite{byc2}. An alternative description is as follows. 

\begin{theorem} \label{thm:CRradial}  Let $\phi\colon  M^m\to \Sf^{n-1}$, $m\geq 2$, be an 
isometric immersion of a product manifold 
$M^m = J \times N^{m-1}$, with $J \subset \R$
an open interval, endowed with the polar metric~(\ref{eq:polarmet2}). Then the map
$f\colon  M^m \to \R^{n}\setminus \{0\}$ given by
\[
f(s,x) = e^{A s} \phi(s,x), \;\;A\neq 0,
\]
defines an immersion with the constant ratio property 
with respect to the radial vector field $\mathcal R$. 

Conversely, if $f\colon M^m\to\R^n\setminus\{0\}$ is an isometric immersion
with the constant ratio property with respect to $\mathcal R$, then either
$f(M)$ is contained in $\Sf^{n-1}$, or $f(M)$ is an open subset of a cone
over a submanifold $N^{m-1}$ of $\Sf^{n-1}$, or $f$ is given locally as above
(globally, if the integral curves of ${\cal R}_f^T$ are defined on $\R$).
\end{theorem}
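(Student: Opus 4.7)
The plan is to reduce Theorem \ref{thm:CRradial} to Theorem \ref{thm:CRmain} (in the case $\epsilon=1$) by means of the conformal diffeomorphism $\Psi\colon \Sf^{n-1}\times \R \to \R^n\setminus\{0\}$ from \eqref{eq:psiradial}. The key observation, already recorded in \eqref{eq:psirelat}, is that $\Psi$ carries the coordinate field $\frac{\partial}{\partial t}$ on $\Sf^{n-1}\times \R$ to the radial field ${\cal R}$. Consequently, part $(iii)$ of Lemma \ref{le:basic} tells us that an isometric immersion $f\colon M^m\to \R^n\setminus\{0\}$ has the constant ratio property with respect to ${\cal R}$ if and only if $\hat f := \Phi\circ f\colon M^m\to \Sf^{n-1}\times \R$ has the constant ratio property with respect to $\frac{\partial}{\partial t}$, where $\Phi = \Psi^{-1}$ is as in \eqref{eq:invpsi}.

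For the direct implication, given $\phi\colon M^m = J\times N^{m-1}\to \Sf^{n-1}$ as in the statement, I would set $\hat f(s,x) = (\phi(s,x), As)$. The direct part of Theorem \ref{thm:CRmain} guarantees that $\hat f$ is an immersion with the constant ratio property with respect to $\frac{\partial}{\partial t}$, and composing with $\Psi$ yields $f(s,x) = \Psi(\phi(s,x), As) = e^{As}\phi(s,x)$, which inherits the constant ratio property with respect to ${\cal R}$ by Lemma \ref{le:basic}$(iii)$.

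For the converse, suppose $f\colon M^m\to \R^n\setminus\{0\}$ has the constant ratio property with respect to ${\cal R}$, and set $\hat f = \Phi\circ f$. Then $\hat f$ has the constant ratio property with respect to $\frac{\partial}{\partial t}$, so the converse part of Theorem \ref{thm:CRmain} leaves three possibilities. If ${\cal R}^T_f$ vanishes identically, then $\hat f(M)$ lies in some horizontal slice $\Sf^{n-1}\times\{t_0\}$, whence $f(M)$ lies in the sphere of radius $e^{t_0}$ centered at the origin, which is the first alternative in the statement (up to the obvious homothety identifying $e^{t_0}\Sf^{n-1}$ with $\Sf^{n-1}$). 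If ${\cal R}^\perp_f$ vanishes identically, then $\hat f(M)$ is an open part of a vertical cylinder $N^{m-1}\times \R$ for some submanifold $N^{m-1}\subset \Sf^{n-1}$, and therefore $f(M) = \Psi(N^{m-1}\times \R) = \{e^t y : y\in N^{m-1},\, t\in\R\}$ is an open subset of the cone over $N^{m-1}$. In the remaining case, Theorem \ref{thm:CRmain} provides $\hat f(s,x) = (\phi(s,x), As)$ with $\phi\colon J\times N^{m-1}\to \Sf^{n-1}$ an isometric immersion of a polar product, and applying $\Psi$ produces $f(s,x) = e^{As}\phi(s,x)$ as claimed. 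The main technical point is bookkeeping rather than a new argument: one must verify that the three cases in Theorem \ref{thm:CRmain} correspond precisely to the three conclusions here, and that the global-versus-local assertion transfers faithfully, which is ensured by the identity ${\cal R}^T_f = \hat{\cal R}^T_{\hat f}$ from Lemma \ref{le:basic}$(i)$, so that the completeness of the integral curves of ${\cal R}^T_f$ is preserved under $\Psi$.
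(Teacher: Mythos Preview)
Your proposal is correct and follows essentially the same route as the paper: reduce to Theorem~\ref{thm:CRmain} via the conformal diffeomorphism $\Psi$ of \eqref{eq:psiradial} and Lemma~\ref{le:basic}$(iii)$, then match the three alternatives. Your treatment is in fact slightly more careful than the paper's in noting that a horizontal slice $\Sf^{n-1}\times\{t_0\}$ maps to the sphere of radius $e^{t_0}$, so the first alternative should really be read as ``contained in a sphere centered at the origin'' (or ``in $\Sf^{n-1}$ up to homothety'').
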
 

\proof By part $(iii)$ of Lemma \ref{le:basic}, an isometric immersion 
$f\colon  M^m \to \R^{n}\setminus\{0\}$ has the constant ratio  property with 
respect to $\mathcal R$ if and only if $f=\Psi \circ \hat f$ for some 
isometric immersion 
$\hat f\colon  M^m \to \Sf^{n-1}\times \R$ that has 
the constant ratio property with respect to $\frac{\partial}{\partial t}$, 
where $\Psi$ is given by \eqref{eq:psiradial}.
The statement then follows from  Theorem \ref{thm:CRmain}.
Notice that $f(M)$ being contained in $\Sf^{n-1}$ is equivalent to $\hat f(M)$
being contained 
in a horizontal slice of $\Sf^{n-1}\times \R$, whereas $f(M)$ being an open subset
of a cone over a submanifold $N^{m-1}$ of $\Sf^{n-1}$ is equivalent to 
$\hat f(M)$ being an open subset of a vertical cylinder $\hat N^{m-1}\times \R$.
 \vspace{2ex}\qed
 
 Notice that the cases in which $f(M)$ is contained in $\Sf^{n-1}$ or
 is an open subset of a cone
over a submanifold $N^{m-1}$ of $\Sf^{n-1}$ correspond, respectively, to the cases in which
 ${\cal R}_f^T$ or ${\cal R}_f^\perp$ vanishes identically. 
 
 Euclidean hypersurfaces of any dimension $n$ that have the constant ratio property with 
respect to the radial vector field $\mathcal R$ have been described
in \cite{byc1} (an alternative description was given  in \cite{m} for $n=2$ and in \cite{y} 
for arbitrary $n$). The next consequence of Theorem \ref{thm:CRradial}
and Lemma \ref{le:element} is essentially the description in \cite{y}. 

\begin{corollary}\label{cor:constantratiohyp} 
Let  $\phi_0\colon  N^{n-2} \to \Sf^{n-1} \subset \R^{n}$ be a hypersurface 
and let $\phi_s: N^{n-2} \to \Sf^{n-1}$ be the family of parallel 
hypersurfaces to $\phi_0$, indexed on the open interval $J \subset \R$. 
Then the map 
$f\colon M^{n-1} := N^{n-2}\times J \to \R^{n} \setminus \{0\}$, given by 
\be\label{eq:f2}
f(x,s) = e^{As} \phi_s(x), \;\;A\neq 0,
\ee
defines, at regular points, a hypersurface
that has the constant ratio property with respect to the radial vector field 
$\mathcal R$. 

Conversely, if $f\colon M^{n-1} \to \R^n$ is a hypersurface with the constant 
ratio property with respect to the radial vector field $\mathcal R$, then
either $f(M)$ is an open subset of $\Sf^{n-1}\subset\R^n$ or of a cone
over a hypersurface of $\Sf^{n-1}$, or $f$ is locally 
given as above.
\end{corollary}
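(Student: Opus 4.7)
The plan is to derive the corollary as a direct specialization of Theorem \ref{thm:CRradial} to the hypersurface case $m=n-1$, using Lemma \ref{le:element} to convert the abstract ``product manifold with polar metric'' input into the concrete family of parallel hypersurfaces in $\Sf^{n-1}$.

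For the direct statement, I would first verify that the formula $\phi_s(x)=\cos(s)\phi_0(x)+\sin(s)N(x)$, viewed as a map $\phi\colon J\times N^{n-2}\to\Sf^{n-1}$, has a polar induced metric. This is classical: $\partial_s\phi_s=-\sin(s)\phi_0+\cos(s)N$ has unit length in $\Sf^{n-1}$ and is orthogonal to $(\phi_s)_*T_xN^{n-2}$ (since $\phi_0$ and $N$ are orthogonal to this image), so the induced metric on $J\times N^{n-2}$ is orthogonal and the curves $s\mapsto(s,x)$ are unit-speed geodesics, which by (\ref{eq:polarmet2}) is precisely the polar form. Theorem \ref{thm:CRradial} then applies directly: $f(s,x)=e^{As}\phi(s,x)=e^{As}\phi_s(x)$ has the constant ratio property with respect to $\mathcal R$ at its regular points.

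For the converse, let $f\colon M^{n-1}\to\R^n$ be a hypersurface with the constant ratio property with respect to $\mathcal R$. Theorem \ref{thm:CRradial} gives three alternatives: either $f(M)\subset\Sf^{n-1}$, or $f(M)$ is an open subset of a cone over a submanifold $N^{n-2}$ of $\Sf^{n-1}$ (which is automatically a hypersurface of $\Sf^{n-1}$ by dimension count), or locally $f(s,x)=e^{As}\phi(s,x)$ with $\phi\colon J\times N^{n-2}\to\Sf^{n-1}$ an isometric immersion of a product carrying a polar metric. In this last case $\dim(J\times N^{n-2})=n-1=\dim\Sf^{n-1}$, so $\phi$ is a local isometry. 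Lemma \ref{le:element} with $\e=1$ then yields a hypersurface $\phi_0\colon N^{n-2}\to\Sf^{n-1}$ with unit normal $N$ such that $\phi(s,x)=\cos(s)\phi_0(x)+\sin(s)N(x)$; this is exactly the family of parallel hypersurfaces $\phi_s$, so substituting back gives $f(x,s)=e^{As}\phi_s(x)$ as required.

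The main obstacle is essentially bookkeeping rather than substance: one must match up the dimension hypothesis $m=n-1$ with the codimension-zero hypothesis of Lemma \ref{le:element}, and verify the polar character of the metric in the forward direction (which Lemma \ref{le:element} only supplies in the reverse direction). Once these points are confirmed, both implications are mechanical consequences of results already established in the paper.
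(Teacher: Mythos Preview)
Your proposal is correct and matches the paper's own approach: the corollary is stated there precisely as a consequence of Theorem \ref{thm:CRradial} together with Lemma \ref{le:element}, with no further argument given. Your write-up fills in exactly the two bookkeeping points the paper leaves implicit---that the parallel family $\phi_s$ induces a polar metric (needed for the direct statement, and not literally contained in Lemma \ref{le:element}), and that in codimension one the map $\phi$ of Theorem \ref{thm:CRradial} becomes a local isometry so that Lemma \ref{le:element} applies for the converse.
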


Notice that the $s$-coordinate curves of the hypersurfaces \eqref{eq:f2} 
are logarithmic spirals, the plane curves that have the constant 
ratio property with respect to the radial vector field $\mathcal R$ on 
$\R^2 \setminus \{0\}$.
 
 \subsection{$T$-constant and $N$-constant Euclidean submanifolds}

  Related classes of isometric immersions $f\colon M^m\to \R^n$ are those for 
which either  $\|{\cal R}_f^T\|$ or $\|{\cal R}_f^\perp\|$ is constant on $M^m$.
These were called in \cite{byc3} $T$-submanifolds and $N$-submanifolds, respectively, 
where a description of them was given.
An alternative description of those submanifolds can be
derived as follows with our methods.

\begin{theorem} \label{thm:TNconst}  Let $\phi\colon  M^m\to \Sf^{n-1}$, $m\geq 2$, be an 
isometric immersion of a product manifold 
$M^m = J \times N^{m-1}$, with $J \subset \R$
an open interval, endowed with the  polar metric~(\ref{eq:polarmet2}).
Define $f\colon  M^m \to \R^{n}\setminus \{0\}$ by
\be\label{eq:tctemap}
f(s,x) = \rho(s) \phi(s,x),
\ee
with 
\be\label{eq:h} \rho(s)= \sec (s+C), \; \;C\in \R,
\ee 
(respectively,
\be\label{eq:h2} \rho(s)=\sqrt{1+G^2(s+C)}, \; \;C\in \R,
\ee
where $G\colon \R\to \R$ is the inverse function of 
$F\colon \R\to \R$, $F(x)=x-\arctan x$). Then the restriction of $f$
to the subset of its regular points defines an  immersion such that 
$\|{\cal R}_f^\perp\|$ (respectively,  $\|{\cal R}_f^T\|$) has 
unit length on $M^m$. 

Conversely, if $f\colon M^m\to\R^n\setminus\{0\}$ is an isometric immersion
such that $\|{\cal R}_f^\perp\|$ (respectively,  $\|{\cal R}_f^T\|$) has a
constant value $K$  
on $M^m$, then either $f(M)$ is an open subset of a cone over a submanifold 
of $\Sf^{n-1}$ (respectively, $f(M)$ is contained in $\Sf^{n-1}$) if $K=0$, 
or, if otherwise,  $f$ is locally 
(globally, if the integral curves of ${\cal R}_f^T$ are defined on $\R$) 
the composition of a map as above with a homothety of $\R^n$ of ratio $K$
($X\in \R^n\mapsto KX$).
\end{theorem}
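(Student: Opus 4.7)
The plan is to prove both directions by direct computation, leveraging only the identity $\mathcal R^T_f=\nabla (\|f\|^2/2)$ and Lemma \ref{le:constgrad}.

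For the direct statement, I would write $f(s,x)=\rho(s)\phi(s,x)$ and exploit $\|\phi\|=1$ together with $\<\phi,\phi_* Y\>=0$ (which hold because $\phi$ takes values in $\Sf^{n-1}$) and the polarity of the metric on $M^m$ induced by $\phi$. This gives $f_*\partial_s=\rho'\phi+\rho\,\phi_*\partial_s$ and $f_* X=\rho\,\phi_* X$ for $X$ tangent to $N^{m-1}$, whence the $f$-induced metric is orthogonal with $\|f_*\partial_s\|^2=(\rho')^2+\rho^2$ and $\|f_* X\|^2=\rho^2 g_s(X,X)$. Decomposing $\mathcal R(f)=f=\rho\phi$ into $f$-tangent and $f$-normal parts then yields
\[
\|\mathcal R^T_f\|^2=\frac{(\rho\rho')^2}{(\rho')^2+\rho^2},\qquad \|\mathcal R^\perp_f\|^2=\frac{\rho^4}{(\rho')^2+\rho^2}.
\]
Imposing $\|\mathcal R^\perp_f\|=1$ gives the ODE $(\rho')^2=\rho^2(\rho^2-1)$, solved by $\rho=\sec(s+C)$, while $\|\mathcal R^T_f\|=1$ gives $(\rho')^2(\rho^2-1)=\rho^2$, solved by $\rho=\sqrt{1+G^2(s+C)}$, as is verified via the identity $G^2G'=1+G^2$, an immediate consequence of $F'(x)=x^2/(1+x^2)$.

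For the converse, the cases $K=0$ are immediate: $\mathcal R^T_f\equiv 0$ forces $\|f\|$ to be locally constant, hence $f(M)\subset\Sf^{n-1}$ up to scaling, while $\mathcal R^\perp_f\equiv 0$ means $\mathcal R=f$ is everywhere tangent, so the integral curves of $\mathcal R$ are radial rays inside $f(M)$, making $f(M)$ part of a cone. For $K\ne 0$, replacing $f$ by $f/K$ is a homothety that rescales both component lengths by $1/K$, so it suffices to treat $K=1$. Setting $F=\|f\|^2/2$, the relation $X(F)=\<X,\mathcal R^T_f\>_f$ shows $\mathcal R^T_f=\nabla F$; accordingly, $\|\mathcal R^T_f\|=1$ is the same as $\|\nabla F\|=1$, while $\|\mathcal R^\perp_f\|=1$ translates, after direct differentiation, into $\|\nabla\sqrt{2F-1}\|=1$. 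Applying Lemma \ref{le:constgrad} to the relevant function yields, locally (globally if its integral curves are complete), an isometry $M^m\cong J\times N^{m-1}$ with polar metric in which $\|f\|^2=2(s+C)$ or $\|f\|^2=(s+C)^2+1$, respectively.

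The final step is to set $\phi=f/\|f\|\colon M^m\to\Sf^{n-1}$ and rerun the computation of the direct part in reverse: the $\phi$-induced metric on $M^m$ turns out to be orthogonal with $\|\phi_*\partial_s\|^2=(1-(\rho')^2)/\rho^2$ (for $\rho=\|f\|$) and $\|\phi_* X\|^2=\|X\|_f^2/\rho^2$. The reparametrization $ds^*=(\sqrt{1-(\rho')^2}/\rho)\,ds$ renders this metric polar in $(s^*,x)$, since $\partial_{s^*}$ becomes unit and orthogonal to the slices. Integrating this ODE together with the explicit form of $\rho(s)$ recovers $\rho(s^*)=\sqrt{1+G^2(s^*+C')}$ in the first case and $\rho(s^*)=\sec(s^*+C')$ in the second, matching the theorem's formulas. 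The main obstacle is the careful bookkeeping of the two distinct polar structures on $M^m$---the one furnished by Lemma \ref{le:constgrad} in the $f$-induced metric and the one required by the theorem in the $\phi$-induced metric---together with the reparametrization $s\mapsto s^*$ that correctly translates between them in each case.
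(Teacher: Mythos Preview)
Your proof is correct and takes a genuinely different route from the paper's. The paper proves Theorem~\ref{thm:TNconst} by passing through the conformal diffeomorphism $\Phi\colon \R^n\setminus\{0\}\to\Sf^{n-1}\times\R$ of~(\ref{eq:invpsi}): it first establishes Lemma~\ref{le:TNconst}, which translates constancy of $\|\mathcal R_f^\perp\|$ or $\|\mathcal R_f^T\|$ into the differential conditions $\nabla_{\hat{\cal Z}_{\hat f}^T}\hat{\cal Z}_{\hat f}^T=(1-\|\hat{\cal Z}_{\hat f}^T\|^2)\hat{\cal Z}_{\hat f}^T$ or $\nabla_{\hat{\cal Z}_{\hat f}^T}\hat{\cal Z}_{\hat f}^T=-\|\hat{\cal Z}_{\hat f}^T\|^2\hat{\cal Z}_{\hat f}^T$ for $\hat f=\Phi\circ f$, and then uses these to see directly that the integral curves of $\hat{\cal Z}_{\hat f}^T$ are pregeodesics, obtaining the $\phi$-polar structure without an intermediate reparametrization. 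You instead stay entirely in $\R^n$, exploiting only $\mathcal R^T_f=\nabla(\|f\|^2/2)$: for $\|\mathcal R_f^T\|=1$ you apply Lemma~\ref{le:constgrad} to $F=\|f\|^2/2$, and for $\|\mathcal R_f^\perp\|=1$ to $\sqrt{2F-1}$ (whose gradient has unit length because $\|\mathcal R_f^T\|^2=\|f\|^2-1$). This produces a polar structure for the $f$-metric rather than the $\phi$-metric, and you then reparametrize $s\mapsto s^*$ via $ds^*=(\sqrt{1-(\rho')^2}/\rho)\,ds$; the integrations $\rho^2=(s+C)^2+1\Rightarrow\rho=\sec(s^*+C')$ and $\rho^2=2(s+C)\Rightarrow\rho=\sqrt{1+G^2(s^*+C')}$ check out. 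Your argument is more elementary and self-contained (it bypasses the conformal machinery and Lemma~\ref{le:TNconst} entirely), while the paper's approach is more in keeping with the unifying conformal viewpoint used throughout and avoids the extra reparametrization step between the two polar structures.
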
 

First we prove the following lemma.

\begin{lemma} \label{le:TNconst}  Let 
 $f\colon  M^m \to \R^{n}\setminus \{0\}$ be an isometric immersion
 and let $\hat f=\Phi\circ f$, where $\Phi\colon \R^{n}\setminus \R\to \Sf^{n-1}\times \R$
is the conformal diffeomorphism given by (\ref{eq:invpsi}).  Then  
\be\label{eq:relatnorm}
\|{\cal R}_f^\perp\|_{f}= e^h\|\hat{\cal Z}_{\hat f}^\perp\|_{\hat f}\;\;\mbox{and}\;\;
\|{\cal R}_f^T\|_f=e^h\|\hat{\cal Z}_{\hat f}^T\|_{\hat f}
\ee
where $\hat{\cal Z}=\frac{\d}{\d t}$ and $h=\<\hat f, \hat{\cal Z}\>$ is the height function 
of $\hat f$ with respect to $\hat{\cal Z}$.  Moreover, the assertions
\begin{itemize} 
\item[(i)]  $\|{\cal R}_f^\perp\|$ is constant 
on $M^m$;
\item[(ii)] 
$\nabla_{\hat{\cal Z}_{\hat f}^T}\hat{\cal Z}_{\hat f}^T=
(1-\|\hat{\cal Z}_{\hat f}^T\|^2)\hat{\cal Z}_{\hat f}^T$;
\item[(iii)]  $A^{\hat f}_{\hat{\cal Z}_{\hat f}^\perp} \hat{\cal Z}_{\hat f}^T=
(1-\|\hat{\cal Z}_{\hat f}^T\|^2)\hat{\cal Z}_{\hat f}^T$
\end{itemize}
are equivalent, and the same holds for  the assertions 
\begin{itemize} 
\item[(i')]  $\|{\cal R}_f^T\|$ is constant 
on $M^m$;
\item[(ii')]
$\nabla_{\hat{\cal Z}_{\hat f}^T}\hat{\cal Z}_{\hat f}^T
=-\|\hat{\cal Z}_{\hat f}^T\|^2\hat{\cal Z}_{\hat f}^T$.
\end{itemize}
\end{lemma}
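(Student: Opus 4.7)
\noindent\emph{Proof proposal.}
My plan is to derive the norm identities directly from Lemma \ref{le:basic} applied to $\Phi$, and then to establish the two equivalences by differentiating those identities and invoking the formula (\ref{eq:NablaT}), which is available because $\hat{\cal Z}=\partial/\partial t$ is parallel in $\Sf^{n-1}\times\R$.

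First I would record that the conformal factor of $\Phi$ at $y\in \R^n\setminus\{0\}$ is $\|y\|^{-1}$, which at $f(x)$ equals $e^{-h(x)}$, since $h=\langle \hat f,\hat{\cal Z}\rangle=\log\|f\|$. Applying Lemma \ref{le:basic} with $\Psi$ replaced by $\Phi$ (so that ${\cal R}$ and $\hat{\cal Z}$ are $\Phi$-related by \eqref{eq:psirelat}) and using part $(ii)$ of its proof yields
\[
\|\hat{\cal Z}^T_{\hat f}\|_{\hat f}=e^{-h}\|{\cal R}^T_f\|_f,\qquad
\|\hat{\cal Z}^\perp_{\hat f}\|_{\hat f}=e^{-h}\|{\cal R}^\perp_f\|_f,
\]
which are precisely the relations \eqref{eq:relatnorm}. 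As a consistency check, both sides of the Pythagorean identity $\|{\cal R}^T_f\|^2+\|{\cal R}^\perp_f\|^2=\|f\|^2=e^{2h}$ agree with the identity $\|\hat{\cal Z}^T_{\hat f}\|^2+\|\hat{\cal Z}^\perp_{\hat f}\|^2=1$ after multiplying by $e^{2h}$.

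Next I would observe that, since $\hat{\cal Z}$ is parallel in $\Sf^{n-1}\times\R$, equation (\ref{eq:NablaT}) applied to $\hat f$ gives $\nabla_X\hat{\cal Z}^T_{\hat f}=A^{\hat f}_{\hat{\cal Z}^\perp_{\hat f}}X$ for every $X\in \mathfrak X(M)$; taking $X=\hat{\cal Z}^T_{\hat f}$ shows immediately that $(ii)$ and $(iii)$ are the same equation. To prove $(i)\Leftrightarrow (iii)$, I would differentiate $\|{\cal R}^\perp_f\|^2=e^{2h}(1-\|\hat{\cal Z}^T_{\hat f}\|^2)$ along an arbitrary $X\in \mathfrak X(M)$; using $X(h)=\langle \hat f_*X,\hat{\cal Z}\rangle=\langle X,\hat{\cal Z}^T_{\hat f}\rangle$ and $X(\|\hat{\cal Z}^T_{\hat f}\|^2)=2\langle A^{\hat f}_{\hat{\cal Z}^\perp_{\hat f}}X,\hat{\cal Z}^T_{\hat f}\rangle=2\langle X,A^{\hat f}_{\hat{\cal Z}^\perp_{\hat f}}\hat{\cal Z}^T_{\hat f}\rangle$, this becomes
\[
X(\|{\cal R}^\perp_f\|^2)=2e^{2h}\bigl\langle X,\;(1-\|\hat{\cal Z}^T_{\hat f}\|^2)\hat{\cal Z}^T_{\hat f}-A^{\hat f}_{\hat{\cal Z}^\perp_{\hat f}}\hat{\cal Z}^T_{\hat f}\bigr\rangle,
\]
from which the equivalence $(i)\Leftrightarrow (iii)$ is immediate by the non-degeneracy of the metric.

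Finally, the equivalence $(i')\Leftrightarrow (ii')$ I would obtain in the same way by differentiating $\|{\cal R}^T_f\|^2=e^{2h}\|\hat{\cal Z}^T_{\hat f}\|^2$, which yields
\[
X(\|{\cal R}^T_f\|^2)=2e^{2h}\bigl\langle X,\;\|\hat{\cal Z}^T_{\hat f}\|^2\hat{\cal Z}^T_{\hat f}+A^{\hat f}_{\hat{\cal Z}^\perp_{\hat f}}\hat{\cal Z}^T_{\hat f}\bigr\rangle,
\]
and then translating $A^{\hat f}_{\hat{\cal Z}^\perp_{\hat f}}\hat{\cal Z}^T_{\hat f}=-\|\hat{\cal Z}^T_{\hat f}\|^2\hat{\cal Z}^T_{\hat f}$ back to the covariant derivative form via (\ref{eq:NablaT}). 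No step looks serious; the only care needed is in identifying the conformal factor of $\Phi$ as $e^{-h}$ and applying the Lemma \ref{le:basic} identities with the correct roles of $f$ and $\hat f$.
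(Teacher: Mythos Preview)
Your proof is correct and follows essentially the same approach as the paper's. The only cosmetic differences are that you appeal to Lemma~\ref{le:basic} for the norm identities (the paper redoes the computation directly), and for the equivalence $(i)\Leftrightarrow(iii)$ you differentiate $e^{2h}(1-\|\hat{\cal Z}^T_{\hat f}\|^2)$ via (\ref{eq:NablaT}) and the symmetry of the shape operator, whereas the paper differentiates $e^{2h}\|\hat{\cal Z}^\perp_{\hat f}\|^2$ via (\ref{eq:Derivadaeta}); these are two sides of the same decomposition of $\tilde\nabla_X\hat{\cal Z}=0$, so the arguments are equivalent.
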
 
\proof For any $x\in M^m$ we have
\begin{eqnarray*} \< \hat{\cal Z}_{\hat f}^\perp(x), \hat{\cal Z}_{\hat f}^\perp(x)\>&=&
 \<\Phi_*(f(x)){\cal R}_f^\perp(x), \Phi_*(f(x)){\cal R}_f^\perp(x)\>\vspace{1ex}\\
 &=&\phi^2(f(x))\<{\cal R}_f^\perp(x), {\cal R}_f^\perp(x)\>\vspace{1ex}\\
&=&\frac{\<{\cal R}_f^\perp(x), {\cal R}_f^\perp(x)\>}{\|f(x)\|^2}\vspace{1ex}\\
&=&e^{-2\<\hat f(x), \hat{\cal Z}(\hat f(x))\>}\<{\cal R}_f^\perp(x), {\cal R}_f^\perp(x)\>
\end{eqnarray*}
 and 
\begin{eqnarray*} \< \hat{\cal Z}_{\hat f}^T(x), \hat{\cal Z}_{\hat f}^T(x)\>_{\hat f}
 &=&\phi^2(f(x))\<{\cal R}_f^T(x), {\cal R}_f^T(x)\>_f\vspace{1ex}\\
&=&\frac{\<{\cal R}_f^T(x), {\cal R}_f^T(x)\>_f}{\|f(x)\|^2}\vspace{1ex}\\
&=&e^{-2\<\hat f(x), \hat{\cal Z}(\hat f(x))\>}\<{\cal R}_f^T(x), {\cal R}_f^T(x)\>_f.
\end{eqnarray*}

   By the first formula in (\ref{eq:relatnorm}),  $\|{\cal R}_f^\perp\|$ is constant 
on $M^m$, say, $\|{\cal R}_f^\perp(x)\|=K\in \R$ for all $x\in M^m$, if and only if
\[ \< \hat{\cal Z}_{\hat f}^\perp(x), \hat{\cal Z}_{\hat f}^\perp(x)\>=
 K^2e^{-2\<\hat f(x), \hat{\cal Z}(\hat f(x))\>}
\]
for all $x\in M^m$. Differentiating with respect to 
$X\in \mathfrak{X}(M)$ implies that this is equivalent to
\begin{eqnarray*} \<\nabla_X^\perp \hat{\cal Z}_{\hat f}^\perp, \hat{\cal Z}_{\hat f}^\perp\>&=&
-\<\hat{\cal Z}_{\hat f}^\perp, \hat{\cal Z}_{\hat f}^\perp\>\<\hat{\cal Z}_{\hat f}^T, X\>
\end{eqnarray*}
for all $X\in \mathfrak{X}(M)$.
 Using (\ref{eq:Derivadaeta}),
the preceding equation can be written as that in item $(iii)$, 
which by (\ref{eq:NablaT}) is equivalent to the formula in item $(ii)$.

  Finally, by the second formula in (\ref{eq:relatnorm}),  $\|{\cal R}_f^T\|$ is constant 
on $M^m$, say, $\|{\cal R}_f^T(x)\|=K\in \R$ for all $x\in M^m$, if and only if
\[ \< \hat{\cal Z}_{\hat f}^T(x), \hat{\cal Z}_{\hat f}^T(x)\>=
 K^2e^{-2\<\hat f(x), \hat{\cal Z}(\hat f(x))\>}
\]
for all $x\in M^m$. 
Differentiating with respect to 
$X\in \mathfrak{X}(M)$ implies that this is equivalent to
\begin{eqnarray*} \<\nabla_X \hat{\cal Z}_{\hat f}^T, \hat{\cal Z}_{\hat f}^T\>&=&
-\<\hat{\cal Z}_{\hat f}^T, \hat{\cal Z}_{\hat f}^T\>\<\hat{\cal Z}_{\hat f}^T, X\>
\end{eqnarray*}
for all $X\in \mathfrak{X}(M)$. 
 Using that $\hat{\cal Z}_{\hat f}^T$ is a gradient vector field implies that
the preceding equation is equivalent to that in item $(ii')$. \qed\vspace{2ex}

\noindent \emph{Proof of Theorem \ref{thm:TNconst}:} Let $\hat f=\Phi\circ f$, where 
$\Phi\colon \R^{n}\setminus \{0\}\to \Sf^{n-1}\times \R$
is the conformal diffeomorphism given by (\ref{eq:invpsi}). Then
$$
\hat f(s,x)=(\phi(s,x), h(s))
$$
for all $(s,x)\in M^m$. Arguing as in the beginning of the proof of Theorem \ref{thm:CRmain}, 
we see that, if $\hat{\cal Z}=\frac{\d}{\d t}\in \mathfrak{X}(\Sf^{n-1}\times \R)$, then 
$\hat{\cal Z}_{\hat f}^T=\lambda \frac{\d}{\d s}$ for some $\lambda \in C^{\infty}(M)$. 
Moreover, 
$$
h'(s)=\frac{\partial}{\partial s}\<\hat f,\hat{\cal Z}\>
 =  \<\hat f_*\frac{\partial}{\partial s}, \hat{\cal Z}\> 
 = \< \frac{\partial}{\partial s}, \hat{\cal Z}^T_{\hat f} \>_{\hat f} 
 = \lambda \< \frac{\partial}{\partial s}, \frac{\partial}{\partial s} \>_{\hat f} 
 = (1+(h'(s))^2)\lambda,
$$
hence 
$$
\|\hat{\cal Z}_{\hat f}^T\|^2=\frac{(h'(s))^2}{1+(h'(s))^2}\,\,\mbox{and}\,\, 
\|\hat{\cal Z}_{\hat f}^\perp\|^2=\frac{1}{1+(h'(s))^2}
$$
which implies that $e^h\|\hat{\cal Z}_{\hat f}^\perp\|$ (respectively, $e^h\|\hat{\cal Z}_{\hat f}^T\|$) 
is constant if $h$ is given by (\ref{eq:h}) (respectively, (\ref{eq:h2})).  Thus the statement 
follows from  (\ref{eq:relatnorm}).

Now we prove the converse. If either  $\|{\cal R}_f^\perp\|$ or $\|{\cal R}_f^T\|$  is constant 
on $M^m$, then both the equations in items $(ii)$ and $(ii')$ 
imply that the 
integral curves of $\hat{\cal Z}_{\hat f}^T/\|\hat{\cal Z}_{\hat f}^T\|$ are geodesics.
Moreover, using that $\hat{\cal Z}_{\hat f}^T$ is the gradient of
the height function $h=\<\hat f,\hat {\cal Z}\>$, and hence that the orthogonal distribution
$\hat{\cal Z}_{\hat f}^T$ is integrable,  
we see that $M^m$ is locally diffeomorphic to a product manifold $M^m=I\times N^{m-1}$, and that 
 $$f(x,s)=(\phi(x,s), h(s)$$
 for some immersion  $\phi\colon M^m\to \Sf^{m-1}$  and some $h\in C^{\infty}(I)$. 
 Moreover, if for each $s\in I$ we denote by $g_s$ the metric induced by the map 
 $$x\in M^m \mapsto  \phi(x,s)\in  \Sf^{m-1},$$
 then the fact that the $s$-coordinate curves $s\mapsto (s,x)$ (the integral curves of 
 $\hat{\cal Z}_{\hat f}^T$) are reparametrizations of geodesics says that the metric induced by $f$ is
 $$d\tilde \sigma=(1+(h'(s))^2)ds^2+g_s.$$
 Therefore, as in the proof of the direct statement, the assumption that 
 $\|{\cal R}_f^\perp\|$ or  $\|{\cal R}_f^T\|$  has a
constant value $K$  
on $M^m$ translates, respectively, into the ODEs 
$$ \frac{e^{2h}}{1+(h')^2}=K\;\;\mbox{and}\;\; \frac{e^{2h}(h')^2}{1+(h')^2}=K$$
for $h$, whose solutions are easily checked to be given by 
 (\ref{eq:h}) and (\ref{eq:h2}), respectively. \qed
 
 \section{Principal direction property for radial vector fields}

Corollary \ref{prop:charac2} takes the following simpler form for
radial vector fields.

\begin{corollary}\label{prop:charac} 
An isometric immersion $f\colon M^m\to\R^n\setminus\{0\}$ has the principal 
direction property with respect to the radial vector field ${\cal R}$  if 
and only if ${\cal R}^{\perp}_f$ is parallel along $\{{\cal R}^T_f\}^\perp$ with respect
to the normal connection.
\end{corollary}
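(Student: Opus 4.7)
The plan is to bypass Corollary \ref{prop:charac2} and argue directly, exploiting the defining identity of the radial vector field ${\cal R}(y)=y$ on $\R^n$, namely $\tilde\nabla_X{\cal R}=X$ for every $X\in\mathfrak{X}(\R^n)$, where $\tilde\nabla$ denotes the Euclidean flat connection. First I would decompose ${\cal R}$ as $f_*{\cal R}^T_f+{\cal R}^\perp_f$ along $f$ and apply $\tilde\nabla_X$ for $X\in \mathfrak{X}(M)$. Then the Gauss and Weingarten formulas turn $\tilde\nabla_X{\cal R}=f_*X$ into the two component equations
\[
\nabla_X{\cal R}^T_f=X+A^f_{{\cal R}^\perp_f}X
\quad\text{and}\quad
\alpha_f(X,{\cal R}^T_f)=-\nabla^\perp_X{\cal R}^\perp_f,
\]
the second being the exact Euclidean analogue of (\ref{eq:Derivadaeta}).

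Next I would observe that the principal direction property is equivalent to $\alpha_f({\cal R}^T_f,X)=0$ for every $X\in\{{\cal R}^T_f\}^\perp$, since $\<A^f_\eta{\cal R}^T_f,X\>=\<\alpha_f({\cal R}^T_f,X),\eta\>$ for every $\eta\in\Gamma(N_fM)$, and ${\cal R}^T_f$ being an eigenvector of each shape operator amounts exactly to the vanishing of those inner products for all $X\perp{\cal R}^T_f$. Substituting the normal equation above then yields $\nabla^\perp_X{\cal R}^\perp_f=0$ for every $X\in\{{\cal R}^T_f\}^\perp$, that is, ${\cal R}^\perp_f$ is parallel along $\{{\cal R}^T_f\}^\perp$, and the converse reads off identically.

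Points where ${\cal R}^T_f$ vanishes require no special treatment: there the principal direction property is automatic, and the normal equation forces $\nabla^\perp_X{\cal R}^\perp_f=0$ for all $X\in T_xM$ by itself, so both sides of the equivalence hold trivially. An alternative, more conceptual route would be to invoke Corollary \ref{prop:charac2} with the conformal diffeomorphism $\Psi\colon\Sf^{n-1}\times\R\to\R^n\setminus\{0\}$ of (\ref{eq:psiradial}) and the parallel field $\partial/\partial t$, to which ${\cal R}$ is $\Psi$-related by (\ref{eq:psirelat}). The only mildly tricky step on that route, and the place I would expect the main obstacle, is to show that the two conditions appearing in Corollary \ref{prop:charac2}, namely constancy of the ratio $\|{\cal R}^\perp_f\|/\|{\cal R}^T_f\|$ along $\{{\cal R}^T_f\}^\perp$ together with parallelism of the unit vector ${\cal R}^\perp_f/\|{\cal R}^\perp_f\|$, collapse to the single statement that ${\cal R}^\perp_f$ itself is parallel. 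This relies on the Euclidean identity $X\|f\|^2=2\<{\cal R}^T_f,X\>_f=0$ for $X\in\{{\cal R}^T_f\}^\perp$, which converts constancy of the ratio into constancy of $\|{\cal R}^\perp_f\|$. The direct Gauss--Weingarten argument sketched above bypasses this reconciliation entirely and is therefore the cleaner proof to present.
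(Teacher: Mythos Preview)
Your direct Gauss--Weingarten argument is correct and is exactly the approach the paper takes: it computes $\tilde\nabla_X({\cal R}\circ f)=f_*X$, extracts the normal component $\nabla_X^\perp{\cal R}^\perp_f=-\alpha_f(X,{\cal R}^T_f)$, and reads off the equivalence. Your additional remarks (the tangent equation, the treatment of points where ${\cal R}^T_f$ vanishes, and the alternative route via Corollary~\ref{prop:charac2}) are sound but go beyond what the paper spells out.
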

\begin{proof}
Denoting by $\tilde \nabla$ the Euclidean connection and by $\nabla$ the
Levi-Civita connection of $M^n$, the Gauss and Weingarten formulas yield
\begin{eqnarray*}
f_*X & = & \tilde \nabla_X (\mathcal R\circ f) 
= \tilde \nabla_X(f_*\mathcal R^T_f + \mathcal R^{\perp}_f) \\
   & = &  f_*\nabla_X \mathcal R^T_f + \alpha_f(X,\mathcal R^T_f) 
- f_*A_{\mathcal R^{\perp}_f}X + \nabla_X^{\perp} \mathcal R^{\perp}_f.
\end{eqnarray*}
The normal component of this equation reads 
$\nabla_X^{\perp} \mathcal R^{\perp}_f = - \alpha_f(X,\mathcal R^T_f)$, 
which implies the statement.
\end{proof}

In the case of hypersurfaces,   Corollary \ref{prop:charac} reads
as follows.

\begin{corollary}\label{prop:charachyp} 
A hypersurface $f:M^{n-1}\to\R^{n}\setminus\{0\}$ has the principal
direction property with respect to the radial vector field ${\cal R}$ if
and only if $\|{\cal R}^\perp_f\|$ is constant along $\{\mathcal R^T_f\}^\perp$. 
\end{corollary}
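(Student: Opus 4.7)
The plan is to deduce this as a direct specialization of Corollary \ref{prop:charac}, which already reduces the principal direction property with respect to $\mathcal R$ to the condition that $\mathcal R^{\perp}_f$ be parallel along $\{\mathcal R^T_f\}^\perp$ in the normal connection. For a hypersurface the normal bundle has rank one, so this parallelism should collapse to the scalar condition that $\|\mathcal R^{\perp}_f\|$ be constant along $\{\mathcal R^T_f\}^\perp$.

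The first step is to pick, locally along $f$, a unit normal $N$; this is always possible since any line bundle is locally trivial. Differentiating $\langle N, N\rangle = 1$ gives $\langle \nabla^\perp_X N, N\rangle = 0$ for every $X\in \mathfrak X(M)$, and because the normal fibre is one-dimensional this forces $\nabla^\perp_X N = 0$ identically. Writing $\mathcal R^{\perp}_f = h\,N$ with $h := \langle \mathcal R, N\rangle \in C^\infty(M)$, so that $\|\mathcal R^{\perp}_f\| = |h|$, the preceding observation gives
\[
\nabla^\perp_X \mathcal R^{\perp}_f = X(h)\, N .
\]
Thus $\mathcal R^{\perp}_f$ is parallel along $\{\mathcal R^T_f\}^\perp$ if and only if $X(h)=0$ for every $X\in \{\mathcal R^T_f\}^\perp$, that is, $h$ is constant on each connected leaf of this distribution. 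By Corollary \ref{prop:charac} this is equivalent to $f$ having the principal direction property with respect to $\mathcal R$.

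The only subtle point, and the only thing I would expect to require care, is the equivalence between constancy of $h$ and constancy of $|h|=\|\mathcal R^{\perp}_f\|$ along the distribution. One implication is immediate; for the converse, on a connected leaf on which $|h|$ is constant $h$ cannot change sign — it is either identically zero or nowhere zero — and hence $h$ itself is constant on that leaf. With this small continuity argument in place, the equivalence of the two formulations follows.
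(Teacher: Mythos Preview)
Your proof is correct and follows exactly the route the paper intends: the corollary is stated without proof in the paper, but it is meant as the codimension-one specialization of Corollary~\ref{prop:charac}, using that in a rank-one normal bundle a section is parallel in the normal connection if and only if its length is constant. Your explicit unpacking via a local unit normal $N$ and the function $h=\langle \mathcal R,N\rangle$, together with the continuity argument handling $|h|$ versus $h$, makes this implicit step precise.
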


   By the observation after Lemma \ref{le:basic}, every hypersurface of
$\R^{n}\setminus\{0\}$ that has the constant ratio property with respect
to $\mathcal R$ also has the principal direction property with respect to
$\mathcal R$. Indeed, the radial vector field $\mathcal R$ is $\Psi$-related
the parallel unit vector field $\frac{\partial}{\partial t} \in \mathfrak X(\Sf^{n-1}\times\R)$ 
tangent to the factor $\R$, where $\Psi\colon \Sf^{n-1}\times\R \to \R^n \setminus\{0\}$
is the conformal diffeomorphism given by (\ref{eq:psiradial}). This also follows
from the preceding corollary by noticing that, since $\|\mathcal R\circ f\|$
is constant along $\{\mathcal R^T_f\}^\perp$, then 
$\|{\cal R}^\perp_f\|$ is also constant along $\{\mathcal R^T_f\}^\perp$
if $f$ is a hypersurface with the constant ratio property with respect
to $\mathcal R$.\vspace{1ex}

The next result gives a description of all isometric immersions
$f\colon M^m \to \R^{n}\setminus\{0\}$, $m\geq 2$, that have the principal
 direction property with respect to ${\cal R}$.

\begin{theorem} \label{thm:PDradial} 
Let $\phi \colon\, N^{m-1}\to \Sf^{n-1} \subset \R^{n}$ be an isometric 
immersion along which there exists an orthonormal set of normal 
vector fields $\xi_1, \ldots, \xi_k$ that are parallel in the normal bundle.
Let $\gamma:I\to\Sf^k\times\R\subset \R^{k+2}$, with
$\gamma(s)= (\gamma_1(s),\ldots,\gamma_{k+1}(s),\gamma_{k+2}(s))$,
be a smooth regular curve such that $\gamma_{k+2}$ has non-vanishing
derivative. Then the restriction of the map $f\colon N^{m-1} \times I \to \R^{n}\setminus\{0\}$,
given by
\begin{equation} \label{eq:fPDradial}
f(x,s) = e^{\gamma_{k+2}(s)} \left( \sum_{i=1}^k \gamma_i(s)\xi_i(x) 
+ \gamma_{k+1}(s)\phi(x) \right),
\end{equation}
to the subset of its  regular points defines an immersion
with the principal direction property with respect to ${\cal R}$. 

Conversely, if $f\colon M^m\to\R^n\setminus\{0\}$ is an isometric immersion
with the principal direction property with respect to $\mathcal R$, then either
$f(M)$ is an open subset of a cone
over a submanifold $N^{m-1}$ of $\Sf^{n-1}$, or $f$ is given locally as above
(globally, if the geodesic integral 
curves of ${\cal R}_{f}^T$ are defined on $\R$). 
\end{theorem}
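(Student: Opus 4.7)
The plan is to reduce the classification to Theorem~\ref{thm:PDmain} via the conformal diffeomorphism $\Psi\colon \Sf^{n-1}\times \R\to \R^n\setminus\{0\}$ of~(\ref{eq:psiradial}), exactly as in the proof of Theorem~\ref{thm:CRradial}. By~(\ref{eq:psirelat}) the vector fields $\frac{\partial}{\partial t}$ and $\mathcal R$ are $\Psi$-related, so setting $\hat f := \Phi\circ f$ with $\Phi=\Psi^{-1}$ as in~(\ref{eq:invpsi}), part $(iv)$ of Lemma~\ref{le:basic} gives that $f$ has the principal direction property with respect to $\mathcal R$ if and only if $\hat f$ has it with respect to $\frac{\partial}{\partial t}$.

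For the direct statement I would form, from the data $\phi, \xi_1,\ldots,\xi_k, \gamma$ in the hypothesis, the map $\hat f\colon N^{m-1}\times I\to \Sf^{n-1}\times\R$ given by~(\ref{eq:fPD}) with $\epsilon=1$. The identity $\gamma_1^2+\cdots+\gamma_k^2+\gamma_{k+1}^2=1$ together with the mutual orthonormality of $\{\xi_1,\ldots,\xi_k,\phi\}$ in $\R^n$ ensures that the first $n$ coordinates of $\hat f(x,s)$ lie on $\Sf^{n-1}$, while the last coordinate is $\gamma_{k+2}(s)$. By Theorem~\ref{thm:PDmain} the restriction of $\hat f$ to its regular points has the principal direction property with respect to $\frac{\partial}{\partial t}$, and since $\Psi(y,t)=e^ty$ one computes directly
\[ (\Psi\circ\hat f)(x,s) = e^{\gamma_{k+2}(s)}\Bigl(\sum_{i=1}^k\gamma_i(s)\xi_i(x) + \gamma_{k+1}(s)\phi(x)\Bigr), \]
which is precisely~(\ref{eq:fPDradial}). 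Lemma~\ref{le:basic}$(iv)$ then transfers the principal direction property to $f=\Psi\circ\hat f$.

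For the converse, given $f$ with the principal direction property with respect to $\mathcal R$, I set $\hat f:=\Phi\circ f$ and apply Lemma~\ref{le:basic}$(iv)$ to conclude that $\hat f$ has the principal direction property with respect to $\frac{\partial}{\partial t}$. Theorem~\ref{thm:PDmain} then yields two cases: either $\hat f(M)$ is an open part of a vertical cylinder $\tilde\phi(N^{m-1})\times\R$, in which case $f(x,s)=e^s\tilde\phi(x)$ parametrizes an open subset of the cone over $\tilde\phi(N^{m-1})\subset \Sf^{n-1}$; or $\hat f$ is locally given by~(\ref{eq:fPD}), and the same computation above produces~(\ref{eq:fPDradial}) for $f$. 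The global assertion follows from part $(i)$ of Lemma~\ref{le:basic}: the vector fields $\mathcal R^T_f$ and $(\partial/\partial t)^T_{\hat f}$ coincide as vector fields on $M$, so their integral curves agree as parametrized curves and the completeness hypothesis transfers directly.

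I expect no substantive obstacle; the only work is bookkeeping, namely verifying that the prescribed $\hat f$ actually lands in $\Sf^{n-1}\times\R$ (via the identity on $\gamma$ and the orthonormality of $\{\xi_1,\ldots,\xi_k,\phi\}$) and that the vertical cylinder case in $\Sf^{n-1}\times \R$ translates under $\Psi$ to the cone case in $\R^n\setminus\{0\}$.
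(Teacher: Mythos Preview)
Your proposal is correct and follows exactly the same route as the paper: the paper's proof consists of the single observation that, by part~$(iv)$ of Lemma~\ref{le:basic} and~(\ref{eq:psirelat}), $f$ has the principal direction property with respect to $\mathcal R$ if and only if $\hat f=\Phi\circ f$ has it with respect to $\frac{\partial}{\partial t}$, and then invokes Theorem~\ref{thm:PDmain}. You have simply unpacked the bookkeeping (the explicit form of $\Psi\circ\hat f$, the vertical cylinder/cone correspondence, and the transfer of the completeness hypothesis via Lemma~\ref{le:basic}$(i)$) that the paper leaves implicit.
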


\begin{proof}
By part $(iv)$ of Lemma \ref{le:basic}, an isometric immersion 
$f\colon M^m\to\R^{n}\setminus\{0\}$ has the principal direction property 
with respect to  ${\cal R}$ if and only if $f=\Psi \circ \hat f$ 
for some isometric immersion $\hat f\colon M^m\to \Sf^{n-1}\times \R$ with 
the principal direction property with respect to $\frac{\partial}{\partial t}$. 
The statement then follows from  Theorem \ref{thm:PDmain}.
\end{proof}

\begin{corollary} \label{cor:canonicalfieldhyp} 
Let  $\phi_0\colon N^{n-2} \to \Sf^{n-1} \subset \R^{n}$ be a hypersurface and 
let $\phi_s\colon N^{n-2} \to \Sf^{n-1}$ be the family of parallel 
hypersurfaces to $\phi_0$, indexed on an open interval $J \subset \R$. 
Then the restriction of the map 
$f\colon M^{n-1} \colon= N^{n-2} \times J \to \R^{n} \setminus \{0\}$, given by 
\be\label{eq:f2b}
f(x,s)=e^{a(s)}\phi_s(x),
\ee
to the subset of its regular points defines a hypersurface  with  the principal direction 
property with respect to ${\cal R}$. 

Conversely, if $f\colon M^{n-1}\to\R^n\setminus\{0\}$, $n\geq 3$, is an 
isometric immersion
with  the  principal direction property with 
respect to $\mathcal R$, then either
$f(M)$ is an open subset of a cone
over a hypersurface of $\Sf^{n-1}$, or $f$ is given locally as above
(globally, if the  integral 
curves of ${\cal R}_{f}^T$ are defined on $\R$). 
\end{corollary}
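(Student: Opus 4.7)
The plan is to obtain this corollary as the hypersurface specialization ($m=n-1$) of Theorem \ref{thm:PDradial}, using Lemma \ref{le:element} to translate between the curve data $\gamma$ appearing in \eqref{eq:fPDradial} and the parallel family $\{\phi_s\}$ appearing in \eqref{eq:f2b}.

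For the direct statement, I would first apply Lemma \ref{le:element} with $\epsilon=1$ to write $\phi_s(x)=\cos(s)\phi_0(x)+\sin(s)N(x)$, where $N$ is a unit normal vector field to $\phi_0$ in $\Sf^{n-1}$. Then \eqref{eq:f2b} becomes
$f(x,s)=e^{a(s)}\bigl(\cos(s)\phi_0(x)+\sin(s)N(x)\bigr),$
which is exactly \eqref{eq:fPDradial} with $k=1$, $\xi_1=N$, $\phi=\phi_0$ and $\gamma(s)=(\sin s,\cos s,a(s))\in\Sf^1\times\R$. At regular points of $f$ one has $\gamma_3'(s)=a'(s)\neq 0$, so the regularity hypothesis of Theorem \ref{thm:PDradial} is satisfied and $f$ inherits the principal direction property with respect to $\mathcal R$.

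For the converse, Theorem \ref{thm:PDradial} applied with $m=n-1$ says that either $f(M)$ is an open subset of a cone over a submanifold $N^{n-2}\subset\Sf^{n-1}$ (automatically a hypersurface of $\Sf^{n-1}$, giving the first alternative of the corollary), or $f$ is locally of the form \eqref{eq:fPDradial} for some isometric immersion $\phi\colon N^{n-2}\to\Sf^{n-1}$ and an orthonormal set $\xi_1,\dots,\xi_k$ of normal vector fields of $\phi$ that are parallel in the normal connection. Since $\phi$ is a hypersurface of $\Sf^{n-1}$, its normal bundle there has rank one, so $k\le 1$. If $k=0$ then $\gamma_1\equiv\pm 1$ and \eqref{eq:fPDradial} degenerates to $f(x,s)=\pm e^{\gamma_2(s)}\phi(x)$, again an open subset of a cone over $\pm\phi(N^{n-2})\subset\Sf^{n-1}$. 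If $k=1$ I set $\xi_1=N$ and use $\gamma_1^2+\gamma_2^2=1$ to write $\gamma_1(s)=\sin\beta(s)$, $\gamma_2(s)=\cos\beta(s)$; Lemma \ref{le:element} then identifies $\gamma_1(s)N(x)+\gamma_2(s)\phi(x)$ with $\phi_{\beta(s)}(x)$, yielding $f(x,s)=e^{\gamma_3(s)}\phi_{\beta(s)}(x)$.

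The remaining technical step, which I expect to be the main obstacle, is the reparametrization $u=\beta(s)$. On any interval where $\beta'\neq 0$ this is a local diffeomorphism and converts the expression into \eqref{eq:f2b} with $a=\gamma_3\circ\beta^{-1}$, giving the second alternative. On intervals where $\beta$ is constant, $\gamma$ is confined to a meridian of $\Sf^1$ and $f(x,s)=e^{\gamma_3(s)}\phi_{\beta_0}(x)$ traces out an open subset of a cone over $\phi_{\beta_0}(N^{n-2})\subset\Sf^{n-1}$, so the first alternative applies there. The global version, under the hypothesis that the integral curves of $\mathcal R_f^T$ are defined on $\R$, is inherited from the corresponding global statement in Theorem \ref{thm:PDradial}.
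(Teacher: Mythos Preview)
Your approach is correct and is exactly what the paper intends: the corollary is the hypersurface case ($m=n-1$) of Theorem~\ref{thm:PDradial}, with Lemma~\ref{le:element} used to identify the curve $(\gamma_1,\gamma_2)\in\Sf^1$ with parallel-hypersurface data. The paper does not give a separate proof.

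One small correction: your claim that regularity of $f$ at $(x,s)$ forces $a'(s)\neq 0$ is false. If $a'(s_0)=0$ but $\phi_{s_0}$ is an immersion, then $\partial_s f(x,s_0)=e^{a(s_0)}\,\partial_s\phi_s|_{s_0}$ is the (nonzero) unit normal to $\phi_{s_0}$ in $\Sf^{n-1}$, which is independent of the $\partial_i f$, so $f$ is still regular there; yet $\mathcal R_f^T$ vanishes at such a point. The condition $\gamma_{k+2}'\neq 0$ in Theorem~\ref{thm:PDradial} (i.e.\ $a'\neq 0$ here) should be taken as an implicit standing hypothesis in the corollary, inherited from the theorem, rather than deduced from regularity of $f$.
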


  Notice that $f(M)$ is foliated by the plane spirals
  $$s\mapsto f(x,s)=e^{a(s)}(\cos s \, \phi(x) + \sin s \, N(x)),$$
  and the orthogonal hypersurfaces $x\mapsto f(x,s)$ are homothetical to the parallel
  hypersurface $\phi_s$ of $\phi$. 

For $n=3$, Corollary \ref{cor:canonicalfieldhyp} reduces to the main 
theorem of \cite{mf}, where surfaces in $\R^3\setminus\{0\}$ with the 
principal direction property with respect to ${\cal R}$ were called 
\emph{generalized constant ratio} surfaces, in view of the fact that 
any constant ratio surface in $\R^3$ has this property, as pointed out after
Corollary \ref{prop:charachyp}. This terminology is not appropriate
for Euclidean submanifolds of codimension greater than one, for in this case
the constant ratio property with respect to ${\cal R}$ no longer implies 
the principal direction property with respect to ${\cal R}$. 

The following result classifies isometric immersions into 
$\R^{n}\setminus \{0\}$ that have both the constant ratio and the 
principal direction properties with respect to $\mathcal R$.

\begin{corollary} 
Let $f\colon  M^m= N^{m-1} \times I \to \R^n\setminus\{0\}$ be given by 
\eqref{eq:fPDradial} for a 
regular curve 
$\gamma\colon I\subset\R\to\Sf^k\times\R$, 
$\gamma(s) = (\gamma_1(s),\ldots, \gamma_{k+1}(s), \gamma_{k+2}(s))$, such that 
$\bar \gamma\colon I\to \Sf^k\subset \R^{k+1}$, 
$\bar \gamma(s)= (\gamma_1(s),\ldots, \gamma_{k+1}(s))$, is a unit-speed curve and 
$\gamma_{k+2}(s))=As$ for some $A\neq 0$. 
 Then 
the restriction of $f$ to the subset of its regular points is an
immersion with both the principal direction and constant 
ratio properties with respect to $\mathcal R$.

Conversely, if $f\colon M^m\to\R^n\setminus\{0\}$, $m\geq 2$, is an 
isometric immersion
with both the constant ratio and principal direction properties with 
respect to $\mathcal R$, then either
$f(M)$ is an open subset of a cone
over a submanifold $N^{m-1}$ of $\Sf^{n-1}$, or $f$ is given locally as above
(globally, if the  integral 
curves of ${\cal R}_{f}^T$ are defined on $\R$). 
\end{corollary}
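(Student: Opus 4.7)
The plan is to reduce to Corollary \ref{cor:CRandPD} through the conformal diffeomorphism $\Psi\colon \Sf^{n-1}\times\R\to\R^n\setminus\{0\}$ of \eqref{eq:psiradial}. By \eqref{eq:psirelat}, the radial field $\mathcal R$ is $\Psi$-related to $\partial/\partial t$, so parts $(iii)$ and $(iv)$ of Lemma \ref{le:basic} together yield that an isometric immersion $f\colon M^m\to\R^n\setminus\{0\}$ has both the constant ratio and the principal direction properties with respect to $\mathcal R$ if and only if $\hat f:=\Phi\circ f\colon M^m\to\Sf^{n-1}\times\R$ has both these properties with respect to $\partial/\partial t$, where $\Phi=\Psi^{-1}$ is given by \eqref{eq:invpsi}.

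For the direct statement, I will take the map $\hat f\colon N^{m-1}\times I\to\Sf^{n-1}\times\R$ supplied by Corollary \ref{cor:CRandPD} with $\epsilon=1$: that is, $\hat f$ defined by \eqref{eq:fPD} for a regular curve $\gamma$ in $\Sf^k\times\R\subset\R^{k+2}$ whose projection $\bar\gamma=(\gamma_1,\ldots,\gamma_{k+1})$ is unit-speed in $\Sf^k$ and whose last component is $\gamma_{k+2}(s)=As$ with $A\neq 0$. That corollary ensures $\hat f$ has both properties with respect to $\partial/\partial t$. Since $\Psi(x,t)=e^t x$, a direct computation shows that $f=\Psi\circ\hat f$ is exactly the map \eqref{eq:fPDradial} under the stated hypotheses on $\gamma$, and by the equivalence above $f$ has both properties with respect to $\mathcal R$.

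For the converse, the equivalence above yields $\hat f=\Phi\circ f$ with both properties with respect to $\partial/\partial t$. If $\mathcal R^\perp_f\equiv 0$ on $M^m$, then by part $(i)$ of Lemma \ref{le:basic} also $\hat{\cal Z}^\perp_{\hat f}\equiv 0$, so $\hat f(M)$ is an open part of a vertical cylinder $N^{m-1}\times\R\subset\Sf^{n-1}\times\R$; applying $\Psi(x,t)=e^tx$ then shows that $f(M)$ is an open subset of the cone over $N^{m-1}\subset\Sf^{n-1}$. Otherwise the hypotheses of Corollary \ref{cor:CRandPD} are fully activated, so $\hat f$ is given locally by \eqref{eq:fPD} with $\bar\gamma$ unit-speed in $\Sf^k$ and $\gamma_{k+2}(s)=As$, and composing with $\Psi$ yields $f$ in the form \eqref{eq:fPDradial} with the required conditions. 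The global version is preserved because $\Phi$ is a diffeomorphism carrying $\mathcal R^T_f$ to a positive multiple of $\hat{\cal Z}^T_{\hat f}$, so completeness of integral curves is transferred.

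Since Lemma \ref{le:basic} and Corollary \ref{cor:CRandPD} do the heavy lifting, I do not anticipate a serious obstacle; the only delicate point is matching the trivial vertical cylinder case upstairs in $\Sf^{n-1}\times\R$ with the cone case downstairs in $\R^n\setminus\{0\}$, which follows immediately from the explicit form of $\Psi$.
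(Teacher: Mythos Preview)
Your proof is correct and follows essentially the same route as the paper: reduce to Corollary~\ref{cor:CRandPD} via the conformal diffeomorphism $\Psi$ of \eqref{eq:psiradial} using parts $(iii)$ and $(iv)$ of Lemma~\ref{le:basic}. Your treatment is slightly more explicit than the paper's (you spell out the cone/vertical-cylinder correspondence and the completeness transfer), but the argument is the same; one small remark is that by Lemma~\ref{le:basic}(i) the tangent components $\mathcal R^T_f$ and $\hat{\cal Z}^T_{\hat f}$ are in fact \emph{equal} as vector fields on $M^m$, not merely proportional, which makes the integral-curve statement immediate.
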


\begin{proof} By parts $(iii)$ and $(iv)$ of Lemma \ref{le:basic}, any
isometric immersion $f:M^m\to \R^{n}\setminus\{0\}$ that has both the constant 
ratio and principal direction properties with respect to ${\cal R}$ is given by 
$f=\Psi\circ \hat f$ for some isometric immersion $\hat f\colon M^{m}\to \Sf^{n-1}\times \R$
that has both the constant ratio and principal direction properties with respect to
$\frac{\partial}{\partial t}$, where $\Psi$ is given by \eqref{eq:psiradial}. 
The statement then follows from Corollary \ref{cor:CRandPD}. 
\end{proof}

\subsection{Euclidean submanifolds with parallel ${\mathcal R}^{\perp}_f$}

    In view of Corollary \ref{prop:charac}, it is natural to ask  which 
isometric immersions $f\colon M^m\to\R^n\setminus\{0\}$ have the property that
${\cal R}^{\perp}_f$ is parallel on $M^m$ with respect
to the normal connection. The next result classifies such submanifolds.

\begin{theorem} \label{cor:parallelPD} 
Let $f\colon  M^m= N^{m-1} \times I \to \R^n\setminus\{0\}$ be given by 
\eqref{eq:fPDradial} for a 
regular curve 
$\gamma\colon I\subset\R\to\Sf^k\times\R$, 
$\gamma(s) = (\gamma_1(s),\ldots, \gamma_{k+1}(s), \gamma_{k+2}(s))$, such that 
$\bar \gamma\colon I\to \Sf^k\subset \R^{k+1}$, 
$\bar \gamma(s)= (\gamma_1(s),\ldots, \gamma_{k+1}(s))$, is a geodesic 
and $\gamma_{k+2}(s)=\log(\sec (s+C))$ for some $C\in \R$ and all $s\in I$. 
 Then the restriction of to the subset of its regular points is an immersion 
 with the property that ${\cal R}_f^\perp$ is parallel in the normal connection.

Conversely, if $f\colon M^m\to\R^n\setminus\{0\}$ is an isometric immersion
with the property that ${\cal R}_f^\perp$ is parallel in the normal connection, then either
$f(M)$ is an open subset of a cone
over a submanifold $N^{m-1}$ of $\Sf^{n-1}$, or $f$ is given locally as above
(globally, if the  integral 
curves of ${\cal R}_{f}^T$ are defined on $\R$), up to a homothety of $\R^n$. 
\end{theorem}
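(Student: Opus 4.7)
My plan is to derive both directions by combining Corollary~\ref{prop:charac} (which identifies the principal direction property with parallelism of ${\cal R}^{\perp}_f$ along $\{{\cal R}^T_f\}^\perp$) with Theorems~\ref{thm:PDradial} and~\ref{thm:TNconst}. The key recasting is that, from the identity $\nabla_X^{\perp}{\cal R}^{\perp}_f=-\alpha_f(X,{\cal R}^T_f)$ obtained in the proof of Corollary~\ref{prop:charac}, the hypothesis that ${\cal R}^{\perp}_f$ be parallel in the normal connection is equivalent to
\[
\alpha_f(X,{\cal R}^T_f)=0 \quad \text{for every } X\in \mathfrak{X}(M),
\]
which splits naturally into two weaker conditions: the principal direction property (parallelism along $\{{\cal R}^T_f\}^\perp$), and the extra equation $\alpha_f({\cal R}^T_f,{\cal R}^T_f)=0$ (parallelism along ${\cal R}^T_f$). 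Differentiating $\langle {\cal R}^{\perp}_f,{\cal R}^{\perp}_f\rangle$ also gives, as a consequence, that $\|{\cal R}^{\perp}_f\|$ is constant on $M^m$.

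For the converse, outside the cone case the principal direction property together with Theorem~\ref{thm:PDradial} gives that $f$ is locally of the form~\eqref{eq:fPDradial} for some regular curve $\gamma=(\gamma_1,\dots,\gamma_{k+2})$ with $\bar\gamma=(\gamma_1,\dots,\gamma_{k+1})$ lying in $\Sf^k$. On the other hand, the constancy of $\|{\cal R}^{\perp}_f\|$ together with Theorem~\ref{thm:TNconst} gives $\|f(x,s)\|=K\sec(s+C)$ for some nonzero constant $K$ and some $C\in\R$. Since~\eqref{eq:fPDradial} and $\bar\gamma\in\Sf^k$ yield $\|f(x,s)\|=e^{\gamma_{k+2}(s)}$, comparing the two gives $\gamma_{k+2}(s)=\log K+\log\sec(s+C)$; the homothety of $\R^n$ by the factor $1/K$ absorbs the constant $\log K$ and brings us to the form $\gamma_{k+2}(s)=\log\sec(s+C)$ claimed in the statement.

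It remains to translate the leftover equation $\alpha_f({\cal R}^T_f,{\cal R}^T_f)=0$ into the condition that $\bar\gamma$ is a geodesic of $\Sf^k$. Since, by the argument used in the proof of Theorem~\ref{thm:CRmain}, ${\cal R}^T_f$ is a multiple of $\partial/\partial s$ in the product structure $M^m=N^{m-1}\times I$, this is equivalent to $\alpha_f(\partial_s,\partial_s)=0$, i.e. to $\partial^2 f/\partial s^2$ being tangent to $f$. Differentiating $f=e^{\gamma_{k+2}}\bar f$ twice in $s$, using the already established $\gamma_{k+2}=\log\sec(s+C)$ and the identities $\xi_{i\ast}X=-\phi_*A^{\phi}_{\xi_i}X$ implied by the parallelism of the $\xi_i$ in the normal bundle of $\phi$ in $\Sf^{n-1}$, the normal-to-$f$ component of $\partial^2 f/\partial s^2$ reduces precisely to a nonzero multiple of the spherical geodesic operator $\bar\gamma''+\|\bar\gamma'\|^2\bar\gamma$, so $\alpha_f({\cal R}^T_f,{\cal R}^T_f)=0$ is equivalent to $\bar\gamma$ being a geodesic of $\Sf^k\subset\R^{k+1}$. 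The direct part of the theorem then follows by running the same computation in reverse.

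The main obstacle will be the last step: carrying out the computation of $\partial^2 f/\partial s^2$ cleanly enough to see that the tangential part is automatically accounted for (using the already-fixed form of $\gamma_{k+2}$ and the spherical constraint $\sum_{i=1}^{k+1}\gamma_i^2=1$) and that the normal part indeed collapses to the scalar-multiple of the $\Sf^k$-geodesic equation with no cross terms involving the $\xi_i$ contributions from the $N^{m-1}$ directions. An alternative route, which I would use to double-check, is to pass to $\hat f=\Phi\circ f\colon M^m\to\Sf^{n-1}\times\R$ via the conformal diffeomorphism~\eqref{eq:invpsi}: using~\eqref{eq:nderiv}, the condition that ${\cal R}^{\perp}_f$ be parallel becomes $\hat\nabla_X^{\perp}\hat{\cal Z}^{\perp}_{\hat f}=-X(h)\hat{\cal Z}^{\perp}_{\hat f}$, i.e. that $e^{h}\hat{\cal Z}^{\perp}_{\hat f}$ be parallel in the normal connection of $\hat f$, and this can be analyzed directly from the explicit description of the normal bundle of $\hat f$ provided by~\cite[Proposition~3.2]{bt}.
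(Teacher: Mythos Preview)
Your decomposition of the condition ``$\mathcal R_f^\perp$ parallel'' into the principal direction property plus $\alpha_f(\mathcal R_f^T,\mathcal R_f^T)=0$, via the identity $\nabla_X^\perp\mathcal R_f^\perp=-\alpha_f(X,\mathcal R_f^T)$, is correct and is a genuinely different (more direct) route than the paper's. The paper instead passes to $\hat f=\Phi\circ f$ and proves the auxiliary Lemma~\ref{le:parallelperp}, which translates parallelism of $\mathcal R_f^\perp$ into the eigenvalue condition $A^{\hat f}_\xi\hat{\cal Z}^T_{\hat f}=\langle\hat{\cal Z}^\perp_{\hat f},\xi\rangle\hat{\cal Z}^T_{\hat f}$; it then splits this by taking $\xi\perp\hat{\cal Z}^\perp_{\hat f}$ (which, via~\eqref{eq:alphapt3}, gives that $\bar\gamma$ is a geodesic) and $\xi=\hat{\cal Z}^\perp_{\hat f}$ (which, via Lemma~\ref{le:TNconst}(iii), gives constancy of $\|\mathcal R_f^\perp\|$ and hence the form of $\gamma_{k+2}$). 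Your alternative route is exactly this argument.

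There is, however, a real gap in your primary approach. You invoke Theorem~\ref{thm:TNconst} to conclude $\|f(x,s)\|=K\sec(s+C)$, hence $\gamma_{k+2}(s)=\log K+\log\sec(s+C)$, \emph{in the parameter $s$ of~\eqref{eq:fPDradial}}. But the $s$ in Theorem~\ref{thm:TNconst} is the arc-length parameter of the spherical part (the polar-metric condition), whereas the $s$ in~\eqref{eq:fPDradial} satisfies $\|\partial_s\bar f\|=\|\bar\gamma'\|$, which you have not shown to be $1$. What Theorem~\ref{thm:TNconst} actually gives is $e^{\gamma_{k+2}(s)}=K\sec(\sigma(s)+C)$ where $\sigma$ is the arc length of $\bar\gamma$; your step ``comparing the two gives $\gamma_{k+2}(s)=\log K+\log\sec(s+C)$'' is therefore unjustified, and the subsequent claim that the normal part of $\partial_s^2 f$ reduces to a multiple of $\bar\gamma''+\|\bar\gamma'\|^2\bar\gamma$ fails for the same reason (there is a residual scalar equation that only disappears when $\|\bar\gamma'\|=1$).

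The fix is simple and keeps your approach intact: after obtaining the form~\eqref{eq:fPDradial} from Theorem~\ref{thm:PDradial}, first reparametrize $\gamma$ so that $\bar\gamma$ has unit speed (this is legitimate since $\|\mathcal R_f^\perp\|>0$ forces $\bar\gamma'\neq 0$). Then the single remaining condition $\alpha_f(\partial_s,\partial_s)=0$ yields, by your direct computation, both $\bar\gamma''+\bar\gamma=0$ (unit-speed great circle) and the ODE $\gamma_{k+2}''=1+(\gamma_{k+2}')^2$, whose solutions are $\gamma_{k+2}(s)=\log\sec(s+C)+\log K$. This actually bypasses Theorem~\ref{thm:TNconst} entirely. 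The paper's order (geodesic first, then $\gamma_{k+2}$) is what makes the reparametrization step natural there as well.
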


First we prove the following lemma

\begin{lemma} \label{le:parallelperp}  Let 
 $f\colon  M^m \to \R^{n}\setminus \{0\}$ be an isometric immersion
 and let $\hat f=\Phi\circ f$, where $\Phi\colon \R^{n}\setminus \{0\} \to \Sf^{n-1}\times \R$
is the conformal diffeomorphism given by (\ref{eq:invpsi}).  Then ${\cal R}_f^\perp$ is parallel
on $M^m$ with respect to the normal connection if and only if 
 \be\label{eq:parallelperp}
 A^{\hat f}_{\xi} \hat{\cal Z}_{\hat f}^T=\<\hat{\cal Z}_{\hat f}^\perp, \xi\>\hat{\cal Z}_{\hat f}^T
 \ee
 for all $\xi\in \Gamma(N_{\hat f}M)$, where $\hat{\cal Z}=\frac{\d}{\d t}$.
\end{lemma}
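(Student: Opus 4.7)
The plan is to transport the parallelism condition on ${\cal R}^\perp_f$ over to $\hat f$ by exploiting the conformality of $\Phi$ together with the fact that $\hat{\cal Z}=\frac{\d}{\d t}$ is parallel on $\Sf^{n-1}\times\R$. Since \eqref{eq:psirelat} shows that ${\cal R}$ and $\hat{\cal Z}$ are $\Phi$-related, part $(i)$ of Lemma~\ref{le:basic} identifies $\hat{\cal Z}^T_{\hat f}={\cal R}^T_f$ and $\hat{\cal Z}^\perp_{\hat f}=\Phi_\ast{\cal R}^\perp_f$; because $\Phi_\ast$ is a fibrewise isomorphism between $N_fM$ and $N_{\hat f}M$, the vanishing of $\nabla^\perp_X{\cal R}^\perp_f$ is equivalent to the vanishing of $\Phi_\ast\nabla^\perp_X{\cal R}^\perp_f$ for every $X\in\mathfrak{X}(M)$.

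Next I would invoke formula \eqref{eq:nderiv}, established inside the proof of Corollary~\ref{prop:charac2} (with $\Psi$ there replaced by $\Phi$ and ${\cal Z}$ by ${\cal R}$): since $\Phi$ has conformal factor $\varphi$,
\[
\hat\nabla_X^\perp\hat{\cal Z}^\perp_{\hat f}=\Phi_\ast\nabla_X^\perp{\cal R}^\perp_f+\frac{1}{\varphi\circ f}X(\varphi\circ f)\,\hat{\cal Z}^\perp_{\hat f}.
\]
Thus ${\cal R}^\perp_f$ is parallel in the normal connection of $f$ if and only if $\hat\nabla_X^\perp\hat{\cal Z}^\perp_{\hat f}=(\varphi\circ f)^{-1}X(\varphi\circ f)\hat{\cal Z}^\perp_{\hat f}$ for every $X$. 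Because $\hat{\cal Z}$ is parallel in $\Sf^{n-1}\times\R$, \eqref{eq:Derivadaeta} applied to $\hat f$ gives $\hat\nabla_X^\perp\hat{\cal Z}^\perp_{\hat f}=-\alpha_{\hat f}(X,\hat{\cal Z}^T_{\hat f})$, and pairing with an arbitrary $\xi\in\Gamma(N_{\hat f}M)$ rewrites the condition as
\[
\<A^{\hat f}_\xi\hat{\cal Z}^T_{\hat f},X\>_{\hat f}=-\frac{1}{\varphi\circ f}X(\varphi\circ f)\<\hat{\cal Z}^\perp_{\hat f},\xi\>.
\]

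The last step is to identify the scalar coefficient $(\varphi\circ f)^{-1}X(\varphi\circ f)=X(\log(\varphi\circ f))$. Since the conformal factor of $\Phi$ is $\varphi(y)=\|y\|^{-1}$ and the last coordinate of $\hat f=\Phi\circ f$ is $h=\log\|f\|=\<\hat f,\hat{\cal Z}\>$, we have $\log(\varphi\circ f)=-h$ and $X(h)=\<\hat f_\ast X,\hat{\cal Z}\>=\<X,\hat{\cal Z}^T_{\hat f}\>_{\hat f}$, so $(\varphi\circ f)^{-1}X(\varphi\circ f)=-\<X,\hat{\cal Z}^T_{\hat f}\>_{\hat f}$. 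Substituting and letting $X$ run over $\mathfrak{X}(M)$ yields $A^{\hat f}_\xi\hat{\cal Z}^T_{\hat f}=\<\hat{\cal Z}^\perp_{\hat f},\xi\>\hat{\cal Z}^T_{\hat f}$, which is precisely \eqref{eq:parallelperp}. Each implication above is reversible, so the equivalence follows.

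I do not anticipate any real obstacle; the main delicate point is keeping the conformal-factor bookkeeping straight, verifying that the correction term in \eqref{eq:nderiv} indeed reduces to $-\<X,\hat{\cal Z}^T_{\hat f}\>_{\hat f}$ thanks to the specific form $\varphi(y)=\|y\|^{-1}$ of the conformal factor of $\Phi$ and to the identification of the $t$-coordinate of $\hat f$ with $\log\|f\|$. That matching is exactly what forces the right-hand side of the target identity to assume the particular form $\<\hat{\cal Z}^\perp_{\hat f},\xi\>\hat{\cal Z}^T_{\hat f}$.
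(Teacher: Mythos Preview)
Your proposal is correct and follows essentially the same route as the paper: both arguments apply \eqref{eq:nderiv} to reduce parallelism of ${\cal R}^\perp_f$ to the equation $\hat\nabla_X^\perp\hat{\cal Z}^\perp_{\hat f}=(\varphi\circ f)^{-1}X(\varphi\circ f)\,\hat{\cal Z}^\perp_{\hat f}$, compute the logarithmic derivative of $\varphi\circ f=\|f\|^{-1}$ as $-\<X,\hat{\cal Z}^T_{\hat f}\>_{\hat f}$ via the height function, and then invoke \eqref{eq:Derivadaeta} to convert the normal-derivative condition into the shape-operator identity \eqref{eq:parallelperp}. The only cosmetic difference is that you pair with $\xi$ before substituting the conformal factor, whereas the paper does these in the opposite order.
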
 
\proof  By (\ref{eq:nderiv}),  the normal derivatives $\hat\nabla_X^\perp \hat{\cal Z}^\perp_{\hat f}$ and 
$\nabla_X^\perp {\cal R}^\perp_{f}$ are related by
$$
\hat\nabla_X^\perp \hat{\cal Z}^\perp_{\hat f}=\Phi_* \,
\nabla_X^\perp {\cal R}^\perp_{f} 
+ \frac{X(\phi\circ f)}{\phi\circ f}\hat{\cal Z}^\perp_{\hat f}
$$
for any $X\in \mathfrak{X}(M)$, where $\phi$ is the conformal factor of $\Phi$.
Thus ${\cal R}_f^\perp$ is parallel 
on $M^m$ with respect to the  normal connection if and only if
\be\label{eq:par2}
\hat\nabla_X^\perp \hat{\cal Z}^\perp_{\hat f}= 
\frac{X(\phi\circ f)}{\phi\circ f}\hat{\cal Z}^\perp_{\hat f}
\ee
for all $X\in \mathfrak{X}(M)$.
Now, 
$\phi(y)=\frac{1}{\|y\|}$
for all $y\in \R^n\setminus \{0\}$, so 
\begin{eqnarray*}\frac{X(\phi\circ f)}{\phi\circ f}&=&-X(\log \|f\|)\\
&=&-X\<\Phi\circ f, \hat {\cal Z}\>\\
&=&-X\<\hat f, \hat {\cal Z}\>\\
&=&-\<X, \hat{\cal Z}_{\hat f}^T\>
\end{eqnarray*}
for all $X\in \mathfrak{X}(M)$, and therefore (\ref{eq:par2}) becomes
$$
\hat\nabla_X^\perp \hat{\cal Z}^\perp_{\hat f}= -\<X, \hat{\cal Z}_{\hat f}^T\>
\hat{\cal Z}^\perp_{\hat f}
$$
for all $X\in \mathfrak{X}(M)$. It follows from  (\ref{eq:Derivadaeta}) 
that the preceding equation is equivalent to (\ref{eq:parallelperp}). \vspace{2ex} \qed

\noindent \emph{Proof of Theorem \ref{cor:parallelPD}:}
Let $\hat f=\Phi\circ f$, where $\Phi\colon \R^{n}\setminus \{0\}\to \Sf^{n-1}\times \R$
is the conformal diffeomorphism given by (\ref{eq:invpsi}). Then $\hat f$ is given by
(\ref{eq:fPD}) in terms of a regular curve 
$\gamma\colon I\subset\R\to\Sf^k\times\R\subset \R^{k+2}$, 
$\gamma(s)= (\gamma_1(s),\ldots, \gamma_{k+1}(s), \gamma_{k+2}(s))$.
Thus $\hat f$ has the principal direction
property with respect to $\hat{\cal Z}=\frac{\d}{\d t}$ by Theorem \ref{thm:PDmain}.
Moreover, since $\bar \gamma\colon I\to \Sf^k\subset \R^{k+1}$, 
$\bar \gamma(s)= (\gamma_1(s),\ldots, \gamma_{k+1}(s))$, is a geodesic
of $\Sf^k$, it follows from (\ref{eq:alphapt3}) that $A^{\hat f}_{\xi} \hat{\cal Z}_{\hat f}^T=0$
for all $\xi\in \Gamma(N_{\hat f}M)$ with $\<\xi,\hat{\cal Z}_{\hat f}^\perp\>=0$. 
Finally, since the height function 
$\<\hat f, \hat {\cal Z}\>=\gamma_{k+2}(s)=\log(\sec (s+C))$ for some  $C\in \R$ and all $s\in I$,
it follows from Theorem \ref{thm:TNconst} that ${\cal R}_{f}^\perp$
has constant length. By item $(iii)$ of Lemma \ref{le:TNconst}, formula (\ref{eq:parallelperp})
also holds for $\xi= \hat{\cal Z}_{\hat f}^\perp$.

Conversely, if $f\colon M^m\to\R^n\setminus\{0\}$ is an isometric immersion
with the property that ${\cal R}_f^\perp$ is parallel in the normal connection,
then $f$ has the principal direction property with respect to ${\cal R}$ by 
Corollary \ref{prop:charac}. Therefore $\hat f=\Phi\circ f$, 
where $\Phi\colon \R^{n}\setminus \{0\}\to \Sf^{n-1}\times \R$
is the conformal diffeomorphism given by (\ref{eq:invpsi}), has 
the principal direction property with respect to $\hat{\cal Z}=\frac{\d}{\d t}$
by part $(iv)$ of Lemma \ref{le:basic}, and hence it is given  by
(\ref{eq:fPD}) in terms of a regular curve 
$\gamma\colon I\subset\R\to\Sf^k\times\R\subset \R^{k+2}$, 
$\gamma(s) = (\gamma_1(s),\ldots, \gamma_{k+1}(s), \gamma_{k+2}(s))$,
such that $\gamma'_{k+2}$
has nonvanishing derivative by Theorem \ref{thm:PDmain}.
Since $A^{\hat f}_{\xi} \hat{\cal Z}_{\hat f}^T=0$
for all $\xi\in \Gamma(N_{\hat f}M)$ with $\<\xi,\hat{\cal Z}_{\hat f}^\perp\>=0$
by (\ref{eq:parallelperp}), it follows from (\ref{eq:alphapt3}) that
$\bar \gamma\colon I\to \Sf^k\subset \R^{k+1}$, 
$\bar \gamma(s)= (\gamma_1(s),\ldots, \gamma_{k+1}(s))$, is a geodesic
of $\Sf^k$. Finally, since ${\cal R}_{f}^\perp$ has constant length,  
it follows from Theorem \ref{thm:TNconst} that $\gamma_{k+2}(s)=\log(K\sec (s+C))$ 
for some $K>0$, $C\in \R$ and all $s\in I$. \qed

\section{The case of  Killing vector fields}

Let $x_1, \ldots, x_{n+1}$ be the standard coordinates in $\R^{n+1}$ and let 
$\d_{x_i}$ denote a unit vector field tangent to the $x_i$-coordinate curve, $1\leq i\leq n+1$.
The Lie algebra  of Killing vector fields in $\R^{n+1}$ has dimension  $\frac{1}{2}(n+1)(n+2)$
and is generated by the constant vector fields
\be\label{eq:ckvf}
\d_{x_i}, \;\;1\leq i\leq n+1,
\ee
and the vector fields
\be\label{eq:rkvf}
 {\cal K}_{ij}=x_i\d_{x_j}- x_j\d_{x_i},\;\; 1\leq i\neq j\leq n+1,
 \ee
generating rotations around the linear subspaces $\R^{n-1}$ of $\R^{n+1}$
given by  $x_{i} = 0 = x_{j}$. \vspace{1ex}

In this section we describe all isometric immersions that have either the 
constant ratio or the principal 
direction property with respect to ${\cal K}_{ij}$. We work with, 
say, ${\cal K}_{n,n+1}$, which we denote simply by ${\cal K}$. 

  For this purpose, we make use of the conformal diffeomorphism between
$\Hy^n \times \Sf^1$ and $\mathbb R^{n+1} \setminus \mathbb R^{n-1}$ given as follows. 
Let $e_0, e_1, \ldots, e_{n-1},e_n$ be a pseudo-orthonormal basis of the 
Lorentzian space $\R^{n+1}_1$ satisfying 
\be\label{eq:pseudo}
\<e_0,e_0\>=0=\<e_n, e_n\>,\;\;\<e_0, e_n\>=-1/2\;\;\mbox{and}\;\;\<e_i,e_j\>
=\delta_{ij},\;\;1\leq i\leq n-1, \;\;0\leq j\leq n.
\ee
 Then the map 
$
\Psi \colon \Hy^n\times \Sf^{1}\subset \R^{n+1}_1 
\times \R^{2} \to \R^{n+1}\setminus \R^{n-1}
$
given by
\[
\Psi(x_0 e_0 + \ldots + x_{n} e_n, (y_1, y_2)) = \frac{1}{x_0}(x_1, 
\ldots, x_{n-1}, y_1, y_2)
\]
is a conformal diffeomorphism with conformal factor  
$$ \varphi(x_0 e_0 + \ldots + x_{n} e_n, (y_1, y_2)) = \frac{1}{x_0}, $$ 
whose inverse $\Psi^{-1}: \R^{n+1}\setminus \R^{n-1} \to 
\Hy^n \times \Sf^{1} \subset \R^{n+1}_1 \times \R^{2}$ is given by
$$ \Psi^{-1}(y_1, \ldots, y_{n+1}) = 
\frac{1}{\sqrt{y_n^2+y_{n+1}^2}} \left( e_0 + \sum_{i=1}^{n-1} y_i e_i 
+ \left( \sum_{i=1}^{n+1} y_i^2 \right) e_n, (y_n, y_{n+1}) \right). $$
Notice that the metric induced by the restriction of $\Psi^{-1}$ to each 
half-space of a hyperplane of $\R^{n+1}$ containing $\R^{n-1}$ is the standard
hyperbolic metric of the half-space model of $\Hy^n$. In other words, the restriction
of the conformal diffeomorphism $
\Psi \colon \Hy^n\times \Sf^{1}\subset \R^{n+1}_1 
\times \R^{2} \to \R^{n+1}\setminus \R^{n-1}
$
to each slice $\Hy^n\times \{z\}\subset \Hy^n\times \Sf^{1}$ gives an isometry 
of the hyperboloidal model of $\Hy^n$ onto its half-space model.

Composing $\Psi$ with the isometric covering map 
$$
\pi\colon \Hy^n\times \R\to \Hy^n\times \Sf^1: (x,t) \mapsto (x, (\cos t, \sin t))
$$ 
produces a conformal covering map 
$\tilde \Psi \colon \Hy^n\times \R\to \R^{n+1}\setminus \R^{n-1}$
given by
\be\label{tpsi}
\tilde\Psi(x_0 e_0 + \ldots + x_n e_n, t) = 
\frac{1}{x_0}(x_1, \ldots, x_{n-1}, \cos t, \sin t).
\ee
The reason $\tilde \Psi$ is useful for our purposes is that the unit vector field 
$\frac{\partial}{\partial t} \in \mathfrak X(\Hy^n \times \R)$ is 
$\tilde\Psi$-related to the Killing vector field 
$\mathcal K \in \mathfrak X(\R^{n+1}\setminus \R^{n-1})$, namely, 
\[
\tilde\Psi_*(x,t)\frac{\d}{\d t}(x,t) ={\cal K}(\tilde\Psi(x,t))
\]
for all $x=(x_0 e_0 + \ldots + x_n e_n,t) \in \Hy^n \times \R$. 

\subsection{The constant ratio property with respect to $\mathcal K$}

Submanifolds of $\R^{n+1}\setminus \R^{n-1}$ having the constant ratio property 
with respect to $\mathcal K$ can be classified as follows by using 
 part $(iii)$ of Lemma \ref{le:basic} and the conformal covering map  (\ref{tpsi}).

\begin{theorem}\label{thm:CRkilling}
Let $\phi\colon M^m\to\Hy^n\subset\R^{n+1}_1$, $m\geq 2$, be an isometric
immersion of a product manifold $M^m={J}\times N^{m-1}$ endowed with
a polar metric. Write 
$\phi(s,x)=\sum_{j=0}^n\phi_j(s,x)e_j$,
where  $e_0, \ldots, e_{n}$ is a pseudo-orthonormal basis of $\R^{n+1}_1$ as 
in (\ref{eq:pseudo}).  Then  the map
$f\colon  M^m \to \R^{n+1} \setminus \R^{n-1}$ given by
\be\label{eq:fCRkilling}
f(s,x) = \frac{1}{\phi_0(s,x)}(\phi_1(s,x), \ldots, \phi_{n-1}(s,x),\cos(A s),
\sin(A s))
\ee
defines an immersion with the constant ratio 
property with 
respect to the Killing vector field $\mathcal K$.

Conversely, if $f\colon M^m \to \R^{n+1} \setminus \R^{n-1}$, $m\geq 2$, is 
an isometric immersion that has the constant ratio property with respect 
to $\mathcal K$, then it is either a rotational submanifold having $\R^{n-1}$
as axis, or $f(M^m)$ lies in a hyperplane that contains the subspace $\R^{n-1}$,
or $f$ is locally given as  above 
(globally, if the integral curves of $\mathcal K_{f}^T$ are defined on $\R$).
\end{theorem}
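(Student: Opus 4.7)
The plan is to exploit the conformal covering map $\tilde\Psi\colon \Hy^n\times\R\to\R^{n+1}\setminus\R^{n-1}$ defined in (\ref{tpsi}) together with the fact, recorded right after (\ref{tpsi}), that $\frac{\partial}{\partial t}$ is $\tilde\Psi$-related to $\mathcal K$. Since part $(iii)$ of Lemma~\ref{le:basic} is formulated for local conformal diffeomorphisms and $\tilde\Psi$ is one at every point, the problem reduces to the classification of isometric immersions of $M^m$ into $\Hy^n\times\R$ with the constant ratio property with respect to $\frac{\partial}{\partial t}$, which is precisely Theorem~\ref{thm:CRmain} with $\epsilon=-1$.

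For the direct statement, given $\phi\colon M^m\to\Hy^n$ as in the hypothesis, Theorem~\ref{thm:CRmain} guarantees that $\hat f(s,x)=(\phi(s,x),As)$ is an immersion into $\Hy^n\times\R$ with the constant ratio property with respect to $\frac{\partial}{\partial t}$. Applying part $(iii)$ of Lemma~\ref{le:basic} to $\tilde\Psi$, the composition $f=\tilde\Psi\circ\hat f$ has the constant ratio property with respect to $\mathcal K$; writing $\phi=\sum_{j=0}^n\phi_je_j$ and substituting into (\ref{tpsi}) returns exactly formula (\ref{eq:fCRkilling}).

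For the converse, given such $f$, I would choose any local lift $\hat f\colon M^m\to\Hy^n\times\R$ with $\tilde\Psi\circ\hat f=f$, which exists because $\tilde\Psi$ is a covering. Part $(iii)$ of Lemma~\ref{le:basic} transfers the constant ratio property from $f$ to $\hat f$ with respect to $\frac{\partial}{\partial t}$, so Theorem~\ref{thm:CRmain} yields three alternatives; the generic alternative gives $\hat f(s,x)=(\phi(s,x),As)$, whose composition with $\tilde\Psi$ is (\ref{eq:fCRkilling}). Globalness follows from the corresponding assertion in Theorem~\ref{thm:CRmain}, since the integral curves of $\mathcal K^T_f$ and of $(\partial/\partial t)^T_{\hat f}$ agree under $\tilde\Psi$ up to reparametrization by part $(i)$ of Lemma~\ref{le:basic}.

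The main work is in matching the two degenerate alternatives of Theorem~\ref{thm:CRmain} with the two degenerate alternatives asserted here. If $\hat f(M)$ is an open part of a vertical cylinder $\hat N^{m-1}\times\R$, then from (\ref{tpsi}), fixing the $\Hy^n$-coordinate and letting $t$ vary traces a circle centered on $\R^{n-1}$ in the $(y_n,y_{n+1})$-plane; thus $f(M)$ is a rotational submanifold with axis $\R^{n-1}$. If $\hat f(M)$ is contained in a horizontal slice $\Hy^n\times\{t_0\}$, then from (\ref{tpsi}) every point of $f(M)$ satisfies $y_n\sin t_0-y_{n+1}\cos t_0=0$, so $f(M)$ lies in the hyperplane spanned by $\R^{n-1}$ and $(0,\ldots,0,\cos t_0,\sin t_0)$, which contains $\R^{n-1}$. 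Both identifications are immediate computations from the explicit form of $\tilde\Psi$, so no genuine obstacle appears.
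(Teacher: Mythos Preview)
Your proof is correct and follows essentially the same route as the paper: both argue that $f=\tilde\Psi\circ\hat f$ with $\hat f$ as in Theorem~\ref{thm:CRmain}, invoke part~$(iii)$ of Lemma~\ref{le:basic}, and identify vertical cylinders with rotational submanifolds and horizontal slices with submanifolds lying in a hyperplane through $\R^{n-1}$. Your write-up is in fact a bit more detailed than the paper's (you spell out the lift through the covering and the explicit verification of the two degenerate cases), but the underlying argument is the same.
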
 
\proof Since the map $f: M^m \to \R^{n+1} \setminus \R^{n-1}$ given by
(\ref{eq:fCRkilling}) is the composition $f=\tilde \Psi\circ \hat f$ of
the conformal covering map  (\ref{tpsi}) with an isometric immersion
$\hat f\colon M^m\to \Hy^{n}\times \R$ as in Theorem \ref{thm:CRmain},
the first statement follows from part $(iii)$ of Lemma \ref{le:basic}.

The converse also follows from the converse statement of Theorem \ref{thm:CRmain}
and part $(iii)$ of Lemma \ref{le:basic} by noticing that 
rotational submanifolds having $\R^{n-1}$
as axis are precisely the images under the conformal covering map 
$\tilde \Psi \colon \Hy^n\times \R\to \R^{n+1}\setminus \R^{n-1}$ of the vertical 
cylinders in  $\Hy^n\times \R$, whereas submanifolds of $\R^{n+1}\setminus \R^{n-1}$
that lie in a hyperplane of $\R^{n+1}$ containing the subspace $\R^{n-1}$
are the images under $\tilde \Psi$ of submanifolds of $\Hy^n\times \R$
that are contained in a horizontal slice of $\Hy^n\times \R$.\vspace{1ex}\qed

     A more explicit description of  hypersurfaces 
$f\colon M^m \to \R^{n+1} \setminus \R^{n-1}$, $m\geq 2$, with the constant ratio 
property with respect to $\mathcal K$, or equivalently, whose unit normal vector field
makes a constant angle with $\mathcal K$, is as follows.

\begin{corollary} \label{cor:CRkillinghyp} 
Let $\phi\colon N^{n-1} \to \Hy^n$ be any hypersurface and let 
$\phi_s\colon N^{n-1} \to \Hy^n\subset \R^{n+1}_1$ be the family of  parallel hypersurfaces to 
$\phi$, indexed on the open interval $J \subset \R$.
Write
$\phi_s(x)=\sum_{j=0}^n\phi_j(s,x)e_j$
where  $e_0, \ldots, e_{n}$ is a pseudo-orthonormal basis of $\R^{n+1}_1$ as in (\ref{eq:pseudo}).
Then the map 
$f\colon M^n:= J \times N^{n-1} \to \R^{n+1} \setminus \R^{n-1}$,  given by
\be\label{eq:fCRkillinghyp}
f(x,s) = \frac{1}{\phi_0(s,x)}(\phi_1(s,x), \ldots, \phi_{(n-1)}(s,x),\cos(As),\sin(As)),\;\;A\neq 0,
\ee
is a hypersurface with the constant ratio property with respect to $\mathcal K$.

Conversely, if $f\colon  M^n \to \R^{n+1} \setminus \R^{n-1}$, $n\geq 2$, 
is a hypersurface that has the constant ratio property with respect to the Killing vector 
field $\mathcal K$, then either $f$ is  a rotational hypersurface  having $\R^{n-1}$
as axis, or $f(M^n)$ is an open subset of a hyperplane that contains $\R^{n-1}$, 
or it is locally given as  above 
(globally, if the integral curves of $\mathcal K_{ f}^T$ are defined on $\R$).
\end{corollary}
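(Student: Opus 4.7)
The plan is to deduce this corollary directly from Theorem \ref{thm:CRkilling} applied in the codimension-one case ($m=n$), using Lemma \ref{le:element} to turn the polar-metric product structure on $M^n$ into the more geometric language of a parallel family of hypersurfaces in $\Hy^n$.

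For the direct statement, I would begin by showing that the map $\phi\colon J\times N^{n-1}\to\Hy^n$ defined by $\phi(s,x):=\phi_s(x)$, where $\phi_s$ is the family of parallel hypersurfaces to $\phi_0$, is a local isometry whose induced metric is a polar metric of the form \eqref{eq:polarmet2}. This is the standard fact underlying Lemma \ref{le:element}: the $s$-curves $s\mapsto \phi_s(x)$ are unit-speed geodesics normal to $\phi_0$, while the $s$-level sets carry the induced metrics $g_s$ of the parallel hypersurfaces $\phi_s$, and the two families are orthogonal. With $\phi$ in hand, Theorem \ref{thm:CRkilling} immediately asserts that the map $f(s,x)=\tilde\Psi(\phi(s,x),As)$ has the constant ratio property with respect to $\mathcal K$, and writing $\phi_s(x)=\sum_{j=0}^n\phi_j(s,x)e_j$ and plugging into the formula \eqref{tpsi} for $\tilde\Psi$ recovers exactly \eqref{eq:fCRkillinghyp}.

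For the converse, suppose $f\colon M^n\to \R^{n+1}\setminus\R^{n-1}$, $n\geq 2$, is a hypersurface with the constant ratio property with respect to $\mathcal K$. Theorem \ref{thm:CRkilling}, applied with $m=n$, leaves three possibilities: either $f$ is a rotational hypersurface having $\R^{n-1}$ as axis, or $f(M^n)$ lies in a hyperplane containing $\R^{n-1}$, or $f$ is locally given by \eqref{eq:fCRkilling} for some isometric immersion $\phi\colon M^n\to \Hy^n$ of a product manifold $M^n=J\times N^{n-1}$ endowed with a polar metric. In the third case $\phi$ is automatically a \emph{local isometry} between equidimensional manifolds, so Lemma \ref{le:element} produces a hypersurface $\phi_0\colon N^{n-1}\to \Hy^n$ whose parallel family $\phi_s=\phi(s,\cdot)$ coincides with $\phi$. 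Writing $\phi_s(x)=\sum_{j=0}^n\phi_j(s,x)e_j$ and substituting into \eqref{eq:fCRkilling} yields the formula \eqref{eq:fCRkillinghyp}.

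There is essentially no serious obstacle: the direct statement is a specialization and the converse is a direct translation via Lemma \ref{le:element}. The only point that requires a brief verification is that the two trivial cases in Theorem \ref{thm:CRkilling}, namely vertical cylinders $N^{n-1}\times \R\subset \Hy^n\times \R$ and horizontal slices $\Hy^n\times\{t_0\}$, correspond under the conformal covering map $\tilde\Psi$ respectively to rotational hypersurfaces with axis $\R^{n-1}$ and to open subsets of hyperplanes containing $\R^{n-1}$; this is exactly the identification already used in the proof of Theorem \ref{thm:CRkilling} and requires no new work.
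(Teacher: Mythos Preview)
Your proposal is correct and follows exactly the approach the paper intends: the corollary is stated without an explicit proof, but the surrounding text makes clear it is the hypersurface specialization of Theorem~\ref{thm:CRkilling} obtained via Lemma~\ref{le:element}, just as Corollary~\ref{cor:constantratiohyp} was derived from Theorem~\ref{thm:CRradial}. Your handling of both directions, including the identification of the two trivial cases under $\tilde\Psi$, matches the paper's reasoning.
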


\begin{remark} \label{munteanu} \emph{ For $n=2$, Corollary \ref{cor:CRkillinghyp} says 
that any surface $f\colon  M^2 \to \R^{3} \setminus \R$ with the constant ratio property 
with respect to the Killing vector field $\mathcal K$ is  either a rotational surface  
having $\R$ as axis, an open subset of a plane that contains $\R$, or it is locally 
(globally, if the integral curves of $\mathcal K_{ f}^T$ are defined on $\R$) a surface 
$f\colon M^2:= J \times I \to \R^{3} \setminus \R$  given by
\be\label{eq:fCRkillingsup}
f(t,s) = \frac{1}{\gamma_0(s,t)}(\gamma_1(s,t),\cos(As),\sin(As)),\;\;A\neq 0,
\ee
where $\gamma\colon I\to \Hy^2\subset \R_1^3$, 
$\gamma(t)=\gamma_0(t) e_0+\gamma_1(t) e_1 +\gamma_2(t) e_2$, is any unit-speed curve, and 
$$\gamma(t,s)=\gamma_0(t,s) e_0+\gamma_1(t,s) e_1 +\gamma_2(t,s) e_2=\cosh s\gamma(t)+\sinh s n(t)$$
is the family of parallel curves to $\gamma$. Here  $n(t)\in T_{\gamma(t)}\Hy^2$
is a unit vector orthogonal to $\gamma'(t)$, and  $e_0, e_1, e_2$ is a a pseudo-orthonormal basis 
$e_0, e_1, e_2$ of $\R^{3}_1$ such that $\<e_0,e_0\>=0=\<e_2, e_2\>$, $\<e_0, e_2\>=-1/2$ and 
$\<e_1, e_1\>=1$.}  

\emph{Taking $\gamma\colon \R \to \Hy^2\subset \R_1^3$ as the horocycle
$$ \gamma(t)=e_0+te_1+(t^2+1)e_2,$$
then $\gamma(t,s)=e^se_0+te^se_1+(t^2e^s+e^{-s})e_2$, and the corresponding surface
$f\colon \R^2 \to \R^{3} \setminus \R$  is given by
$$
f(t,s) = \frac{1}{e^s}(te^s,\cos(As),\sin(As))=(t,e^{-s}\cos(As),e^{-s}\sin(As))\;\;A\neq 0,
$$
the cylinder over the logarithmic spiral.}

  \emph{ A remarkable example appears by starting with the curve $\gamma\colon \R \to \Hy^2\subset \R_1^3$
  given for any $\sigma\in \R$ by
$$
 \begin{array}{l}
{\displaystyle \gamma(t)=\frac{\cosh (t/\cos \sigma)}{\cos \sigma} e_0
+((t/\cos \sigma)\cosh (t/\cos \sigma)-\sinh (t/\cos \sigma))e_1}\vspace{2ex}\\
\hspace*{5ex}
+\cos \sigma(\cosh (t/\cos \sigma)+(t/\cos \sigma)^2\cosh (t/\cos \sigma)-2(t/\cos \sigma)\sinh (t/\cos \sigma))e_2.
\end{array}
$$
From ${\displaystyle \<\gamma'(t), \gamma'(t)\>=\frac{\sinh (t/\cos \sigma)}{\cos \sigma}}$ it follows that 
${\displaystyle \frac{\<\gamma'(t), e_2\>}{\|\gamma'(t)\|}=\cos \sigma}$ for all $t\in \R$, that is, 
$\gamma$ is a helix in $\R_1^3$ with  a light-like axis (see \cite{dt} for a parametrization of all 
helices in $\R_1^3$ lying in $\Hy^2\subset \R_1^3$, as well as for some of their properties).
One can check that a unit vector field $n(t)\in T_{\gamma(t)}\Hy^2$ orthogonal to $\gamma'(t)$ is
$$
\begin{array}{l}
{\displaystyle n(t)=-\frac{\sinh (t/\cos \sigma)}{\cos{\sigma}}e_0+(\cosh (t/\cos \sigma)
-(t/\cos \sigma)\sinh (t/\cos \sigma))e_1}\vspace{2ex}\\
\hspace*{5ex} 
+\cos \sigma(2(t/\cos \sigma)\cosh (t/\cos \sigma)-(1+(t/\cos \sigma)^2)\sinh (t/\cos \sigma))e_2,
\end{array}
$$
and that 
$$
{\displaystyle \gamma(t,s)=-\frac{\cosh \rho}{\cos{\sigma}}e_0+\left(\frac{\cosh \rho}{\cos{\sigma}} t
-\sinh \rho\right)e_1} +\cos \sigma\cosh \rho+\frac{\cosh \rho}{\cos \sigma}t^2-2t\sinh \rho)e_2,
$$
where ${\displaystyle \rho=\frac{t-s \sin \sigma}{\cos \sigma}}$. The corresponding surface 
$f\colon  M^2 \to \R^{3} \setminus \R$ has the constant ratio property with respect to the 
Killing vector field $\mathcal K$ and is parametrized by
$$
{\displaystyle f(t,s)=\left(\frac{\cos\sigma\cos s}{\cosh \rho}, \frac{\cos\sigma\sin s}{\cosh \rho}, 
t-\cos{\sigma}\tanh \rho \right)},
$$
which is Dini's helicoidal surface of constant negative Gauss curvature. 
Therefore, Dini's surface is the image, under the conformal covering map 
$\Phi\colon \Hy^2\times \R\to \R^3\setminus \R$, of the surface in 
$\Hy^2\times \R \subset \R_1^4$ that is generated by starting with a helix 
$\gamma\colon \R\to \Hy^2\subset \R_1^{3}\subset \R_1^4$ with a light-like axis,
taking a standard helix $\beta\colon \R\to \Hy^1\times \R\subset \R_1^3$ 
in a fixed normal space of $\gamma$ in $\R_1^4$, and then
parallel translating $\beta$ along $\gamma$ with respect to the normal connection of $\gamma$.
}

\emph{ Notice also that $t\mapsto f(t,0)$ is a parametrization of a tactrix in a half-space of a plane containing 
the axis $\R$, which shows that a tractrix is the image of a helix in $\Hy^2$ with a light-like axis under
the isometry between the hyperboloidal model of $\Hy^2$ and its half-space model.}
   
\emph{ We point out that surfaces in $\R^3$ whose unit normal vector field
makes a constant angle $\theta \in [0, \pi/2]$ with $\mathcal K$ have been investigated by Nistor and
Munteanu in \cite{mn}, who claimed that the only such surfaces 
were  rotation surfaces with $\R$ as axis (corresponding to $\theta=0$), 
open subsets of  planes containing $\R$ (corresponding to $\theta=\pi/2$),  the cylinder 
over a logarithmic spiral and Dini's helicoidal surface of constant negative curvature.
 In their proof of the classification of such surfaces, however, in the case $\theta\in (0, \pi/2)$
they choose local coordinates satisfying some properties, which turn out to exist only for Dini's surface
and the cylinder over a logarithmic spiral. As a consequence, their classification theorem
misses the remaining surfaces given by (\ref{eq:fCRkillingsup}) in terms of an arbitrary unit-speed curve
$\gamma\colon I\to \Hy^2\subset \R_1^3$ other than a horocycle and a helix in $\Hy^2$ with a light-like axis,
which give rise to the cylinder over  a logarithmic spiral and Dini's  surface, respectively.}
\end{remark}

\subsection{The principal direction property with respect to $\mathcal K$}

 The classification of submanifolds of $\R^{n+1}\setminus \R^{n-1}$ having the 
principal direction property with respect to $\mathcal K$ follows accordingly 
from part $(iv)$ of Lemma \ref{le:basic} and Theorem \ref{thm:PDmain}.

\begin{theorem}\label{thm:PDkilling} Let 
$f\colon N^{m-1}\times I\to \Hy^n\times \R\subset \R_1^{ n+2}$ be given by 
(\ref{eq:fPD}) in terms of a smooth regular curve 
$\gamma\colon I\to \Hy^k\times \R\subset \R_1^{k+2}$, 
$\gamma=(\gamma_1, \ldots, \gamma_{k+2})$,
such that $\gamma_{k+2}$ has nonvanishing derivative. 
Write $\pi\circ f\colon N^{m-1}\times I\to \Hy^n\subset \R^{n+1}_1$ as 
$\pi\circ f=\sum_{j=0}^n f_je_j$, where $\pi\colon \Hy^n\times \R\to \Hy^n$ is the projection
and $e_0, \ldots, e_{n}$ is a pseudo-orthonormal basis of $\R^{n+1}_1$ as in (\ref{eq:pseudo}).
 Then the restriction to the subset of regular points of the map 
 $\hat f\colon N^{m-1}\times I\to \R^{n+1}\setminus \R^{n-1}$ given by
\be\label{eq:fPDkilling}
\hat f(x,s) = \frac{1}{f_0(x,s)}(f_1(x,s), \ldots, f_{n-1}(x,s),\cos(\gamma_{k+2}(s)),
\sin(\gamma_{k+2}(s)))
\ee
is an immersion with the principal direction  property with 
respect to  $\mathcal K$.

Conversely, if $\hat f\colon M^m \to \R^{n+1} \setminus \R^{n-1}$, $m\geq 2$, is 
an isometric immersion that has the principal direction property with respect 
to $\mathcal K$, then  either it is a rotational submanifold having $\R^{n-1}$
as axis, or its image lies in a hyperplane that contains the subspace $\R^{n-1}$,
or it is locally given as  above 
(globally, if the integral curves of $\mathcal K_{\hat f}^T$ are defined on $\R$).
\end{theorem}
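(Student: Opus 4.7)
The plan is to mirror the argument used for Theorem \ref{thm:CRkilling} line by line, replacing Theorem \ref{thm:CRmain} with Theorem \ref{thm:PDmain} and replacing part $(iii)$ of Lemma \ref{le:basic} with part $(iv)$. The crucial input, already verified in the preceding subsection, is that $\frac{\partial}{\partial t}\in \mathfrak X(\Hy^n\times \R)$ is $\tilde\Psi$-related to $\mathcal K\in \mathfrak X(\R^{n+1}\setminus \R^{n-1})$, where $\tilde\Psi$ is the conformal covering map defined by \eqref{tpsi}.

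For the direct statement, let $f\colon N^{m-1}\times I\to \Hy^n\times \R$ be given by \eqref{eq:fPD} in terms of a smooth regular curve $\gamma\colon I\to \Hy^k\times \R$ whose last component has nonvanishing derivative. Theorem \ref{thm:PDmain} asserts that the restriction of $f$ to the subset of its regular points is an immersion with the principal direction property with respect to $\frac{\partial}{\partial t}$. By part $(iv)$ of Lemma \ref{le:basic}, the composition $\hat f=\tilde\Psi\circ f$ then has the principal direction property with respect to $\mathcal K$. Writing $\pi\circ f=\sum_{j=0}^n f_j e_j$ for the $\Hy^n$-component of $f$ and noting that its $\R$-component equals $\gamma_{k+2}(s)$, the explicit formula \eqref{tpsi} for $\tilde\Psi$ yields directly the expression \eqref{eq:fPDkilling} for $\hat f$.

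For the converse, let $\hat f\colon M^m\to \R^{n+1}\setminus \R^{n-1}$ be an isometric immersion with the principal direction property with respect to $\mathcal K$. Since $\tilde\Psi$ is a covering map, locally $\hat f$ admits a lift $f\colon M^m\to \Hy^n\times \R$ with $\tilde\Psi\circ f=\hat f$, and this lift is global when the integral curves of $\mathcal K_{\hat f}^T$ are defined on $\R$. By part $(iv)$ of Lemma \ref{le:basic} applied in the reverse direction, $f$ has the principal direction property with respect to $\frac{\partial}{\partial t}$. If $\frac{\partial}{\partial t}^T_f$ vanishes identically, then $f(M)$ lies in a horizontal slice $\Hy^n\times \{t_0\}$, and \eqref{tpsi} shows that $\hat f(M)$ lies in the hyperplane $\{y_n\sin t_0=y_{n+1}\cos t_0\}$, which contains $\R^{n-1}$. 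Otherwise Theorem \ref{thm:PDmain} represents $f$ locally (globally under the stated hypothesis) in the form \eqref{eq:fPD}; the subcase in which $\gamma$ parametrizes a line $\{p_0\}\times \R$ corresponds to $f$ being a vertical cylinder $N^{m-1}\times \R$, whose image under $\tilde\Psi$ is invariant under rotations about $\R^{n-1}$ by the explicit form \eqref{tpsi}, hence a rotational submanifold with axis $\R^{n-1}$; in the remaining subcase, applying $\tilde\Psi$ to \eqref{eq:fPD} gives precisely \eqref{eq:fPDkilling}.

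The only step that requires more than a mechanical translation of the earlier proof is identifying the geometric meaning of the two exceptional cases downstairs in $\R^{n+1}\setminus \R^{n-1}$. This identification, however, is exactly the one already justified in the proof of Theorem \ref{thm:CRkilling}: vertical cylinders in $\Hy^n\times \R$ correspond bijectively under $\tilde\Psi$ to rotational submanifolds having $\R^{n-1}$ as axis, and horizontal slices of $\Hy^n\times \R$ correspond to submanifolds lying in a hyperplane of $\R^{n+1}$ containing $\R^{n-1}$. Granting this correspondence, no further computation is needed; the classification follows by direct invocation of Theorem \ref{thm:PDmain} and Lemma \ref{le:basic}$(iv)$.
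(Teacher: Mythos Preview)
Your proof is correct and matches the paper's approach exactly: the paper gives no separate proof for this theorem, merely noting before the statement that it ``follows accordingly from part $(iv)$ of Lemma \ref{le:basic} and Theorem \ref{thm:PDmain}'' together with the identification of exceptional cases already carried out in the proof of Theorem \ref{thm:CRkilling}. Your write-up is in fact more detailed than the paper's, but the logical content is identical.
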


If, in the above statement, the smooth regular curve 
$\gamma\colon I\to \Hy^k\times \R\subset \R_1^{k+2}$, $\gamma=(\gamma_1, \ldots, \gamma_{k+2})$,
is such that $\bar\gamma\colon I\to \Hy^k\subset \R_1^{k+1}$, $\bar\gamma=(\gamma_1, \ldots, \gamma_{k+1})$,
has unit speed and 
$\gamma_{k+2}(s)=As$ for all $s\in I$, with $A\neq 0$, then it follows from 
Corollary \ref{cor:CRandPD} that the map $\hat f\colon M^m\to \R^{n+1}\setminus \R^{n-1}$ 
given by (\ref{eq:fPDkilling}) is an immersion with 
both the constant ratio and principal direction properties with 
respect to the Killing vector field $\mathcal K$, and that, conversely, 
any isometric immersion $\hat f\colon M^m \to \R^{n+1} \setminus \R^{n-1}$, $m\geq 2$, that
has both the constant ratio and principal direction properties with respect 
to $\mathcal K$  either is a rotational submanifold having $\R^{n-1}$
as axis, or its image lies in a hyperplane that contains the subspace $\R^{n-1}$,
or it is locally given in this way 
(globally, if the integral curves of $\mathcal K_{\hat f}^T$ are defined on $\R$). \vspace{1ex}

In the case of hypersurfaces, a more explicit description, similar to that in Corollary~\ref{cor:CRkillinghyp},
follows as before from Theorem \ref{thm:PDkilling} by taking into account Lemma \ref{le:element}.

\subsection{The case of conformal Killing vector fields}

Let $x_1, \ldots, x_{n+1}$ denote as before  the standard coordinates in $\R^{n+1}$ and 
$\d_{x_i}$  a unit vector field tangent to the $x_i$-coordinate curve, $1\leq i\leq n+1$.
The Lie algebra of \emph{conformal} Killing vector fields in $\R^{n+1}$ has dimension 
$\frac{1}{2}(n+2)(n+3)$
and is generated by the Killing vector fields given by (\ref{eq:ckvf}) and (\ref{eq:rkvf}),
by the radial vector field
$\displaystyle{\sum_{i=1}^{n+1}x_i \d_{x_i}}$,
and by the vector fields
\be\label{eq:ci}
{\cal C}_i=\frac{1}{2}(x_i^2-\sum_{j \neq i}x_j^2)\d_{x_i}+x_i\sum_{j \neq i}x_j\d_{x_j},\;\;1\leq i\leq n+1.
\ee

The isometric immersions into Euclidean space that have either the constant ratio or
the principal direction property with respect to any of such vector fields have been 
described in the previous sections, except for the vector fields ${\cal C}_i$ in (\ref{eq:ci}).

A description in the latter case follows from the observation that
\be \label{eq:ckvf2}
{\cal  I}_*\d_{x_i}=-\frac{2}{\sum_{i=1}^nx_i^2}\,{\cal C}_i, \;\;1\leq i\leq n+1,
 \ee
where ${\cal  I}$ is an inversion with respect to a unit sphere centered at the origin.
Indeed, in view of (\ref{eq:ckvf2}), it follows from part $(iii)$ (respectively, part $(iv)$)
of Lemma \ref{le:basic} and Theorem~\ref{thm:CRmain} (respectively, Theorem~\ref{thm:PDmain}),
with $\frac{\d}{\d t}=\d_{x_i}$, 
that any isometric immersion $f\colon M^m\to \R^n$ with the constant ratio 
(respectively, principal direction) property with respect to ${\cal C}_i$ is given by
$f={\cal  I}\circ \hat f$, where $\hat f\colon M^m\to \R^n$ is given as in 
Theorem~\ref{thm:CRmain} (respectively, Theorem~\ref{thm:PDmain}). 
In the hypersurface case, a more explicit description is given by such a composition with
$\hat f$ given as in Corollary $2$ (respectively, Theorem $1$) of \cite{to}.
Moreover, if $\hat f\colon M^m\to \R^n$ is as in Corollary \ref{cor:CRandPD},
with $\frac{\d}{\d t}=\d_{x_i}$, then
$f={\cal  I}\circ \hat f\colon M^m\to \R^n$ has both the constant ratio and
the principal direction properties with respect to ${\cal C}_i$.

\bigskip

\noindent Fernando Manfio and Ruy Tojeiro\\
Universidade de S\~ao Paulo\\
Instituto de Ci\^encias Matem\'aticas e de Computa\c c\~ao.\\
Av. Trabalhador S\~ao Carlense 400\\
13560-970 -- S\~ao Carlos\\
BRAZIL\\
\texttt{manfio@icmc.usp.br} and \texttt{tojeiro@icmc.usp.br}

\medskip

\noindent Joeri Van der Veken \\
KU Leuven, Department of Mathematics \\
Celestijnenlaan 200B -- Box 2400 \\
3001 Leuven\\
BELGIUM \\
\texttt{joeri.vanderveken@kuleuven.be}


\begin{thebibliography}{99}

\bibitem{byc1} B.-Y.  Chen, 
\emph{Constant-ratio  hypersurfaces},   
Soochow J. Math. 27  (2001), 353–-362.

\bibitem{byc2} B.-Y.  Chen, 
\emph{Constant-ratio space-like submanifolds in pseudo-Euclidean space},
Houston J. Math. 29 (2003), 281–-294.

\bibitem{byc3} B.-Y.  Chen, 
\emph{Geometry of position functions of Riemannian
submanifolds in pseudo-Euclidean space},
J. Geom. 74 (2002), 61 –- 77.

\bibitem{dt} M. Dajczer and R. Tojeiro,
\emph{On Flat Surfaces in Space Forms},  Houston  J. of Math. 21 (1995), 319--338.

\bibitem{dfjv1}  F.  Dillen,  J.  Fastenakels,  J.  Van  der  Veken  and  L.  Vrancken, 
\emph{Constant  angle surfaces in $\Sf^2 \times \R$},  
Monatsh. Math. 152 (2) (2007), 89–-96.

\bibitem{dfj1}  F. Dillen, J. Fastenakels and J. Van der Veken, 
\emph{Surfaces in $\Sf^2 \times \R$ with a canonical principal direction},  
Ann. Global Anal. Geom. 35 (4) (2009), 381–-396.

\bibitem{dm}  F. Dillen and  M. I. Munteanu, 
\emph{Constant angle surfaces in $\Hy^2 \times \R$}, 
Bull. Braz. Math. Soc. 40 (1) (2009), 85-–97.

\bibitem{dmn}  F. Dillen,  M. I. Munteanu and , A. I. Nistor, 
\emph{Canonical coordinates and principal directions for surfaces 
in $\Hy^2 \times \R$}, 
Taiwanese J. Math. 15 (5) (2011) 2265–-2289.

\bibitem{sh} A. J. Di Scala and G. Ruiz-Hern\'andez,
\emph{CMC hypersurfaces with canonical principal direction in space forms},
Math. Nachr. 290 (2017),  248–-261.  

\bibitem{gprh} E. Garnica, O. Palmas and G. Ruiz-Hern\'andez, 
\emph{Hypersurfaces with a canonical principal direction},  
Differ. Geom. Appl. 30 (2012), 382–-391.

\bibitem{gprh2} E. Garnica, O. Palmas and G. Ruiz-Hern\'andez,
\emph{Classification of constant angle
hypersurfaces in warped products via eikonal functions},  Bol. Soc. Mat. Mexicana
18 (2012), 29–-41.

\bibitem{dmvv} F. Dillen,  M. I. Munteanu, J. Van der Veken and L. Vrancken,
\emph{Classification of constant angle surfaces in a warped product}, 
Balkan J. Geom. Appl. 16 (2) (2011), 35--47.  

\bibitem{dfv} F. Dillen, J. Fastenakels, J. Van der Veken,
\emph{Rotation hypersurfaces in $\Sf^n\times\R$ and $\Hy^n\times\R$}, 
Note di Matematica 29 (2008), 41--54.

\bibitem{mt} F. Manfio and  R. Tojeiro, 
\emph{Hypersurfaces with constant sectional curvature of  $\Sf^n\times \R$ and $\Hy^n\times \R$}, 
Illinois J. Math. 55 (1) (2011), 397--415.

\bibitem{bt} B. Mendon\c ca and R. Tojeiro, 
\emph{Umbilical submanifolds of $\Sf^n\times \R$},   
Canadian J. Math.  66 (2104), 400--428. 

\bibitem{m} M. I. Munteanu, 
\emph{From golden spirals to constant slope surfaces},  
J. Math. Phys. 51 (7) (2010), 073507, 9 pp.

\bibitem{mf} M. I. Munteanu and Y. Fu,  
\emph{Generalized constant ratio surfaces in $\R^3$},  
Bull. Braz. Math. Soc. 45 (1) (2014), 73--90.

\bibitem{mn} M. I. Munteanu and A. I. Nistor, 
\emph{Surfaces in $\mathbb{E}^3$ making constant angle with Killing vector fields},  
Internat. J. Math. 23 (2012), 1250023, 16 pp.

\bibitem{to} R. Tojeiro, 
\emph{On a class of hypersurfaces in $\Sf^n\times \R$ and $\Hy^n\times \R$},   
Bull. Braz. Math. Soc. 41 (2010), 199--209.

\bibitem{to2} R. Tojeiro,
\emph{A decomposition theorem for immersions of product manifolds}, 
Proc. Edinburgh Math. Soc. 59 (2016), 247--269.

\bibitem{y} A. Yampolski, 
\emph{Eikonal Hypersurfaces in the Euclidean
$n$-Space}, Mediterr. J. Math.  (2017) 14: 160.









\end{thebibliography}
\end{document}